\newcommand{\QED}{\end{proof}}
\def\BF#1.{{\bf #1.}}
\renewcommand{\P}{{\mathbb P}}
\newcommand{\Q}{{\mathbb Q}}
\newcommand{\from}{\mathbin{\vbox{\baselineskip=2pt\lineskiplimit=0pt
                         \hbox{.}\hbox{.}\hbox{.}}}} 
\newcommand{\of}{\subseteq}
\newcommand{\set}[1]{\{\,{#1}\,\}}
\newcommand{\singleton}[1]{\left\{{#1}\right\}}
\newcommand{\dom}{\mathop{\rm dom}}
\newcommand{\cof}{\mathop{\rm cof}}
\newcommand{\Add}{\mathop{\rm Add}}
\newcommand{\Con}{\mathop{{\rm Con}}}
\newcommand{\image}{\mathbin{\hbox{\tt\char'42}}}
\newcommand{\restrict}{\upharpoonright} 
\newcommand{\satisfies}{\models}
\newcommand{\smalllt}{\mathrel{\mathchoice{\raise2pt\hbox{$\scriptstyle<$}}{\raise1pt\hbox{$\scriptstyle<$}}{\raise0pt\hbox{$\scriptscriptstyle<$}}{\scriptscriptstyle<}}}
\newcommand{\smallleq}{\mathrel{\mathchoice{\raise2pt\hbox{$\scriptstyle\leq$}}{\raise1pt\hbox{$\scriptstyle\leq$}}{\raise1pt\hbox{$\scriptscriptstyle\leq$}}{\scriptscriptstyle\leq}}}
\newcommand{\boolval}[1]{\mathopen{\lbrack\!\lbrack}\,#1\,\mathclose{\rbrack\!\rbrack}}
\def\[#1]{\boolval{#1}}
\newcommand{\UnderTilde}[1]{{\setbox1=\hbox{$#1$}\baselineskip=0pt\vtop{\hbox{$#1$}\hbox to\wd1{\hfil$\sim$\hfil}}}{}}
\newcommand{\Undertilde}[1]{{\setbox1=\hbox{$#1$}\baselineskip=0pt\vtop{\hbox{$#1$}\hbox to\wd1{\hfil$\scriptstyle\sim$\hfil}}}{}}
\newcommand{\undertilde}[1]{{\setbox1=\hbox{$#1$}\baselineskip=0pt\vtop{\hbox{$#1$}\hbox to\wd1{\hfil$\scriptscriptstyle\sim$\hfil}}}{}}
\newcommand{\UnderdTilde}[1]{{\setbox1=\hbox{$#1$}\baselineskip=0pt\vtop{\hbox{$#1$}\hbox to\wd1{\hfil$\approx$\hfil}}}{}}
\newcommand{\Underdtilde}[1]{{\setbox1=\hbox{$#1$}\baselineskip=0pt\vtop{\hbox{$#1$}\hbox to\wd1{\hfil\scriptsize$\approx$\hfil}}}{}}
\newcommand{\st}{\mid}
\def\<#1>{\langle#1\rangle}
\newcommand{\ot}{\mathop{\rm ot}\nolimits}
\newcommand{\ORD}{\mathop{{\rm ORD}}}
\newcommand{\ZFC}{{\rm ZFC}}
\newcommand{\GCH}{{\rm GCH}}
\newcommand{\cell}[1]{\boxit{\hbox to 17pt{\strut\hfil$#1$\hfil}}}
\newcommand{\head}[2]{\lower2pt\vbox{\hbox{\strut\footnotesize\it\hskip3pt#2}\boxit{\cell#1}}}
\newcommand{\boxit}[1]{\setbox4=\hbox{\kern2pt#1\kern2pt}\hbox{\vrule\vbox{\hrule\kern2pt\box4\kern2pt\hrule}\vrule}}
\newcommand{\Col}[3]{\hbox{\vbox{\baselineskip=0pt\parskip=0pt\cell#1\cell#2\cell#3}}}
\newcommand{\tapenames}{\raise 5pt\vbox to .7in{\hbox to .8in{\it\hfill input: \strut}\vfill\hbox to
.8in{\it\hfill scratch: \strut}\vfill\hbox to .8in{\it\hfill output: \strut}}}
\newcommand{\Head}[4]{\lower2pt\vbox{\hbox to25pt{\strut\footnotesize\it\hfill#4\hfill}\boxit{\Col#1#2#3}}}
\newcommand{\Dots}{\raise 5pt\vbox to .7in{\hbox{\ $\cdots$\strut}\vfill\hbox{\ $\cdots$\strut}\vfill\hbox{\
$\cdots$\strut}}}
\DeclareMathOperator{\crit}{crit}
\newcommand{\elem}{\prec} 
\newcommand{\scexp}[2]{2^{{#1}^{< #2}} }
\newcommand{\Vopenka}{Vop\v{e}nka}
\newcommand{\Los}{\L o\'{s} }
\newcommand{\supjf} { \sup\set{j(f)(\kappa) \st f: \kappa \to
\kappa}} 
\newcommand{\supname}{clearance}
\newcommand{\hj} {high-jump} 
\newcommand{\shj}{super-high-jump}
\newcommand{\ahj}{almost-high-jump}
\newcommand{\sahj}{super-almost-high-jump}
\newcommand{\hjp}{high jump}
\newcommand{\ahjp}{almost high jump}
\newcommand{\shjp}{super high jump}
\newcommand{\sahjp}{super almost high jump}
\newcommand{\hjuecp}{\hjp\ with unbounded excess closure}
\newcommand{\uec}{unbounded excess closure}
\newcommand{\hjot}[1]{high-jump order $#1$}
\newcommand{\figlist}[1]{ 
\addtocounter{figure}{1}
\textsf{\normalsize Figure \thefigure: #1} }
\begin{document}

\title{The large cardinals between supercompact and almost-huge}
\author{Norman Lewis Perlmutter}
\institute{Norman Lewis Perlmutter \at
              Florida Atlantic University\\
              Dept. of Mathematical Sciences, SE 234\\
              Florida Atlantic University\\
              777 Glades Road\\
              Boca Raton, FL 33431 \\
              Tel.: +1-561-297-3340 \\
              Fax: +1-561-297-2436\\
              \email{nperlmutter@gc.cuny.edu}
                         \\
}

\date{Received: date / Accepted: date}

\maketitle

\begin{abstract}
I analyze the hierarchy of large cardinals 
between a supercompact cardinal and an almost-huge cardinal. Many of these 
cardinals are defined by modifying the definition of a \hj\ cardinal. A 
high-jump cardinal is defined as the critical point of an elementary embedding 
$j: V \to M$ 
such that $M$ is closed under sequences of length 
$\sup\set{j(f)(\kappa) \st f: 
\kappa \to \kappa}$. Some of the other cardinals analyzed include the 
super-high-jump 
cardinals, almost-high-jump cardinals, 
Shelah-for-supercompactness cardinals, Woodin-for-supercompactness cardinals, 
\Vopenka\ cardinals, hypercompact cardinals, and enhanced supercompact 
cardinals. I organize these cardinals in terms of consistency 
strength 
and implicational strength. I also analyze the superstrong cardinals, which are 
weaker than 
supercompact cardinals but are related to high-jump cardinals. Two of my most 
important results are as follows.
\begin{itemize}
\item
\Vopenka\ cardinals are the same as  
Woodin-for-supercompactness 
cardinals. \item There are no excessively hypercompact cardinals. \end{itemize} 
Furthermore, I prove 
some results relating high-jump cardinals to forcing, as well as analyzing Laver 
functions for super-high-jump cardinals.  
\keywords{high-jump cardinals \and \Vopenka\ cardinals \and 
Woodin-for-supercompactness cardinals \and hypercompact cardinals \and
 forcing and large cardinals \and Laver functions}
\subclass{03E55}
\end{abstract}

\section{Introduction}
The main purpose of this paper is to examine the 
consistency and implicational strengths of several 
large cardinals falling between supercompact and 
almost-huge cardinals. Many of these cardinals are variants of 
the \hj\ cardinals, which are described in definition \ref{definition.HJ}. I
will also investigate superstrong cardinals, which are weaker than supercompact 
cardinals but are closely related to \hj\ cardinals. Many 
of the cardinals that I 
will 
discuss have been used by Apter, Hamkins, and Sargsyan to prove 
several results about universal indestructibility in
\cite{ApterHamkins99:UniversalIndestructibility}, 
\cite{ApterSargsyan2007:ReductionInCons(WoodinSC)}, 
\cite{Apter2008:Reducing(EnhancedSC)},
\cite{ApterSargsyan2010:UniversalIndestructibilty} and
\cite{Apter2011:SomeApplicationsMethod(Hypercompact)}. This paper is adapted 
from the second chapter of my doctoral dissertation, 
\cite{Perlmutter2013:Dissertation}.

Perhaps the most 
interesting result in this paper is the 
main 
result of section \ref{section.WoodinSC=Vopenka}, that a 
Woodin-for-supercompactness cardinal is equivalent to a \Vopenka\ cardinal. 
Another 
noteworthy result is that there are no excessively hypercompact cardinals, 
which is proven in section \ref{section.ExcessivelyHC}.

Recall that an almost-huge cardinal $\kappa$ is 
characterized by an elementary embedding $j: V \to 
M$ with critical point $\kappa$
such that $M$ is closed under $<\!\!j(\kappa)$-sequences 
in $V$.\footnote{When I speak of an elementary embedding, I always intend to 
denote an 
elementary 
embedding with a critical point between transitive 
proper class models of 
$\ZFC$, unless otherwise 
stated.}
Many 
of the large cardinals that I will discuss here 
are natural weakenings of an 
almost-huge cardinal, formed by reducing the level 
of 
closure of the target model. Indeed, in my study of these cardinals, a key 
methodology is to define new large cardinals by weakening, strengthening, or 
otherwise 
modifying 
existing large cardinal definitions. 
Often, the weaker large cardinals will still be sufficient for proving many of 
the same results as the stronger large cardinals. 
Eventually, by repeatedly weakening definitions, one hopes to obtain an 
equiconsistency, as is done in 
\cite{ApterSargsyan2010:UniversalIndestructibilty}. However, in this paper, I 
focus on the large cardinals themselves rather than their applications.

The chart at the end of the introduction summarizes the relationships between 
the large 
cardinals that I discuss in this paper. Most of the remaining sections will be 
dedicated to proving these relationships. The arrows on the 
chart represent relationships between the cardinals, as indicated in the key. A 
solid arrow from $A$ to $B$ indicates a direct implication: every cardinal with 
property $A$ has property $B$. A dotted arrow means that a cardinal of type $A$ 
is \emph{strictly} stronger in consistency than a cardinal of type $B$. That is 
to say, if there is a cardinal of type A, then it is consistent with \ZFC\ that 
there is a 
cardinal of type B. A double arrow indicates that both of these relationships 
hold. 
The arrows are labeled with theorem numbers referring to the theorems, 
propositions, and corollaries in which 
the corresponding results are proven.
Dashed arrows are labeled with two numbers: one for a theorem demonstrating 
the 
consistency implication and one for a theorem  
demonstrating the failure of the direct implication.

Throughout the paper, I will use the following seed theory notation, which has 
been 
popularized by Hamkins, to refer to factor embeddings and the related measures.

\begin{definition}
Let $j: V \to M$ be an elementary embedding with critical point $\kappa$, and 
let $\lambda$ be a cardinal greater than $\kappa$. Let $U$ be a normal fine 
measure on $P_\kappa \lambda$ given by $A \in U \iff j \image \lambda \in j(A)$. 
Then $U$ is the normal fine measure on $P_\kappa \lambda$ \textbf{induced via 
$j$ 
by the seed $j \image \lambda$}, and the ultrapower embedding generated by $U$ 
is 
the 
$\lambda$-supercompactness factor embedding of $j$ induced by the seed $j \image 
\lambda$. 
\end{definition}

The organization of the paper is as follows. The sections 
after section 
\ref{section.HJ} can mostly be read out of order. I have noted the most 
important 
dependencies between 
the sections below. 
In section 
\ref{section.HJ}, I define the clearance of an elementary embedding and use 
this 
property to define the \hj\ cardinals.  I also define and analyze the related 
notions of \ahj\ cardinals, 
Shelah-for-supercompactness cardinals, and \hj\ functions.  In section 
\ref{section.SuperstrongAndClearance}, which depends on 
section \ref{section.HJ}, I analyze properties 
of the clearance of an embedding and prove theorems tying together the ideas of 
the clearance of an embedding, the \ahj\ cardinals, 
and the superstrong cardinals. The next few sections are arranged mostly by 
decreasing strength of the large cardinal notions studied. In section 
\ref{section.CardinalStrengths}, which depends on section \ref{section.HJ} and 
on lemma \ref{lemma.VThetaElemMjkappa}, I define and analyze several large 
cardinals 
above a \Vopenka\ cardinal and below an almost-huge cardinal. In section 
\ref{section.WoodinSC=Vopenka}, I  define the \Vopenka\ and 
Woodin-for-supercompactness cardinals and prove that they are equivalent.
In 
section \ref{section.ExcessivelyHC}, I define the hypercompact cardinals and 
the 
excessively hypercompact cardinals, and I show that the existence of an 
excessively hypercompact cardinal is inconsistent with \ZFC. In section 
\ref{section.EnhancedSC}, I define the enhanced supercompact cardinals and 
analyze their place in the large cardinal hierarchy. In section 
\ref{section.forcing}, which depends on section \ref{section.HJ}, I consider 
the 
relationship between high-jump cardinals 
and forcing. In section \ref{section.Laver}, which depends on section
\ref{section.HJ} and on lemma \ref{lemma.VThetaElemMjkappa}, I develop 
analogues of Laver 
functions for \hj\ cardinals and related cardinals. In section 
\ref{section.FurtherDefinitionIdeas}, I review open 
problems and directions for further research.

I use the label \emph{theorem} to denote very important 
results. The results labeled as \emph{propositions} 
vary in their mathematical depth. Some of them might more appropriately be 
considered as examples or observations.

\addcontentsline{toc}{section}{\numberline{}Chart of large cardinal 
relations}
\hspace*{5mm}
{\tiny 
\hspace*{5mm}
\begin{minipage}{3in}
\begin{diagram}[height=22pt,width=39pt]
& & \text{excessively 
hypercompact}  & \rIff & 0=1         & & &  \hspace*{-7pt} 
\figlist{Chart of 
large 
cardinals} 
&     \\
 & & & &\dConAndImp & & & &      \\
& & & & \text{almost huge} & & & & \\ 
& & & \ldCon<{(\ref{proposition.AlmostHuge>SSHJ}),  
(\ref{proposition.LeastHJnotStr})}  & 
\dConAndImp>{(\ref{proposition.AlmostHuge>SSHJ})} & & & & 
\\
& &\text{\hj\ with unbounded excess closure } & &  & & & &\\
& &  &
\rdConAndImp(2,2)^{(\ref{proposition.VeryBigClosure})}  & & & &
\\
& & & 
& \text{$(\exists \theta)$ there is a \hj\ measure on $P_\kappa 2^\theta$ } & & 
& & \\
& & &  \ldCon(2, 2)<{(\ref{proposition.HJplus}),  
(\ref{theorem.HJButNotSC})} & 
\dConAndImp>{(\ref{proposition.HJplus})}< {\eta' < \theta} & & & & 
\\
& & \text{ \shjp\ } & \rConAndImp^{(\ref{proposition.HJplusOrder})} 
& \text{\hj\ order $\eta'$} & & & 
&\\
& & & & \dConAndImp>{(\ref{proposition.HJplusOrder})}<{\eta < \eta'} & & &  & 
\\
& & & & \text{ \hj\ order $\eta$ } & & & &\\ 
& & & & \dConAndImp<{\eta > 1}>{(\ref{proposition.HJplusOrder})} & & & & \\
& & & & \text{\hjp\ (with clearance $\theta$) } & \rIff & & \text{ \hj\ order 
$1$ 
}  
& \\
& & &\ldCon(3, 2)^{(\ref{proposition.HJ>SAHJ}) 
(\ref{theorem.HJButNotSC})} & 
\dConAndImp>{(\ref{proposition.HJ>SAHJ})}<{\eta' < \theta} & & & & \\ \
&  \text{ \sahjp\ } & & \rConAndImp^{(\ref{corollary.AHJorder})} & 
\text{\ahj\ order $\eta'$ 
}\ & & & & \\
& & & 
&\dConAndImp>{(\ref{corollary.AHJorder})}<{\eta < \eta'}
& 
& & & \\
& & & & \text{\ahj\ order $\eta$} & 
& 
& & \\
& & & & \dImpAndCon<{\eta > 1}>{(\ref{corollary.AHJorder})} & & & & \\
& & & & \text{\ahjp} & \rIff & & \text{\ahj\ order 1}  & \\
& & & 
&\dImpAndCon>{(\ref{proposition.AHJ>SSC})}
 & & & & \\
& & & &\text{Shelah for supercompactness} & & & & \\
& & & &\dImpAndCon>{
(\ref{proposition.ShelahSCStrongerThanWoodinSC})} 
& & & & \\
& & & &\text{Woodin for 
supercompactness} 
& 
\rIff^{(\ref{theorem.WSC=Vopenka})}  & 
&  \text{\Vopenka} & \\
& 
&
& &\dCon>{(\ref{theorem.WSC>WHC}), 
(\ref{proposition.LeastVopenkaNotWC})}& 
\rdCon(3,2)>
{(\ref{proposition.WoodinSC>ESC}), 
(\ref{proposition.LeastVopenkaNotWC})}
& & \\
 & & & &\text{hypercompact} & &  
&\text{enhanced supercompact} & \\
&
& & &\dImpAndCon & & \ldConAndImp(3,2 )
& &\\
& & & & \text{supercompact} & & & &
\end{diagram}
\end{minipage}
\hspace*{-32mm}
\raisebox{75mm}{
\fbox{
\begin{minipage}{2in}
\begin{align*}
&\begin{diagram}[width=20pt]
A & \rImp & B
\end{diagram}
& \quad A(\kappa) \rImp B(\kappa) \\
&\begin{diagram}[width=20pt]
A & \rCon & B
\end{diagram}
& \quad \! A(\kappa) \rImp \Con(\exists \kappa' B(\kappa')) 
\\ 
&\begin{diagram}[width=20pt]
A & \rImpAndCon & B
\end{diagram}
& \text{\quad both of the above} \\
\end{align*}
The letters $\eta$ and $\eta'$ denote ordinals. The letter $\theta$ 
denotes a cardinal. Numbers indicate theorems.
\end{minipage}
}}

}

\pagebreak

\section[High-jump, almost-high-jump, and Shelah-for-s.c. cardinals]{High-jump 
cardinals, \ahj\ cardinals, \\ and Shelah-for-supercompactness 
cardinals}\label{section.HJ}
In this section, I define \hj\ cardinals, \ahj\ cardinals, and 
Shelah-for-supercompactness cardinals.\footnote{In many cases, the 
English usage rule for the punctuation of compound adjectives is to hyphenate 
compound adjectives coming before a noun, but not compound adjectives coming 
after a noun. Hence, I will write that $\kappa$ is a high-jump cardinal, but 
also that the cardinal $\kappa$ 
is high jump.} I also give  
characterizations for these large cardinals in terms of 
ultrafilters and prove a lemma about factor embeddings that will be very useful 
for the rest of the paper.

The clearance of an elementary embedding, defined below in 
definition 
\ref{definition.ClearanceHeightAndClosure}, is a key 
concept for defining several large 
cardinals. The motivation for defining the clearance is for use as  
a weaker substitute for $j(\kappa)$ in large cardinal definitions.

\begin{definition}
\label{definition.ClearanceHeightAndClosure}
Let $j: M \to N$ be an elementary embedding with 
critical point 
$\kappa$. The \textbf{clearance} of $j$ denotes the 
ordinal $$\supjf.$$ 
\end{definition}

The notation \emph{\supname} is borrowed from the 
sport of pole vaulting, where the \supname\ is the 
height of the bar that the pole vaulter must clear. 
A 
high-jump embedding is like a pole vaulter: for a 
cardinal to be high jump, the closure of the embedding 
must 
successfully clear the \supname, as is described precisely in 
the following definition. The term \emph{\hj\ cardinal} comes from
\cite{ApterHamkins99:UniversalIndestructibility}. However, these cardinals were 
previously defined in 
\cite[p.111]{SolovayReinhardtKanamori1978:StrongAxiomsOfInfinity}, where they 
were given the designation $A_4$.

\begin{definition} \label{definition.HJ}
The cardinal $\kappa$ is a \textbf{high jump cardinal}
 if and only if
there exists a cardinal $\theta$ and an elementary embedding $j: V \to M$ 
with critical
point $\kappa$ and clearance $\theta$ such that $M^\theta \of M$.

An embedding witnessing that $\kappa$ is \hjp\ is called a \textbf{\hj\ 
embedding} for $\kappa$. A normal fine measure on some $P_\kappa \lambda$ 
generating 
an ultrapower embedding that is a \hj\ embedding is called a \textbf{\hj\ 
measure}. 
\end{definition}



The clearance of an embedding has strong 
properties, as I will 
show in the next section. In particular, I 
will show in corollary 
\ref{Theorem.M_thetaElemInM_j(kappa)} 
that if 
$\theta$ is the clearance of \emph{any} 
elementary embedding $j: V \to N$ with critical 
point 
$\kappa$, then 
$N_\theta \elem N_{j(\kappa)}$. 

The following lemma provides a criterion for showing that a factor embedding of 
a high-jump embeddings is a high-jump embedding. It will be used many times 
throughout the paper.

\begin{lemma} \label{lemma.FactorEmbeddingJump} 
\label{lemma.FactorEmbeddingEquality} \label{lemma.HJFactor}
Let $j: V \to M$ be a \hj\ embedding for $\kappa$ with clearance $\theta$. Let 
$\lambda \geq \theta$ be a cardinal such 
that $j \image \lambda \in M$. Let $j_0: V \to M_0$ be the factor embedding 
induced via $j$ by the seed $j \image \lambda$. Then $j_0$ is a \hj\ embedding, 
and the clearance of $j_0$ is $\theta$. 
\end{lemma}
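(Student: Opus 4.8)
The plan is to push everything through the canonical factor map of the ultrapower and read off both the closure and the clearance of $j_0$ from the corresponding data for $j$. Recall the standard seed machinery: if $U$ is the normal fine measure on $P_\kappa \lambda$ induced via $j$ by $j \image \lambda$, then $j_0 : V \to M_0$ is the ultrapower embedding $j_U$, and there is a factor map $k : M_0 \to M$ defined by $k([f]_U) = j(f)(j \image \lambda)$, satisfying $k \circ j_0 = j$. Two standard features of $k$ carry the proof. First, since $U$ is a normal fine measure on $P_\kappa \lambda$, the ultrapower $M_0$ is closed under $\lambda$-sequences, that is $M_0^\lambda \of M_0$. Second, $\mathrm{crit}(k) > \lambda$, so $k$ is the identity on all ordinals $\leq \lambda$, and in particular (as $\lambda \geq \theta > \kappa$) $k$ fixes every ordinal $\leq \theta$ and fixes $\kappa$.

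There are two things to verify about $j_0$: that $M_0$ is closed under $\theta$-sequences, and that its clearance equals $\theta$. The first is immediate, since $\theta \leq \lambda$ and $M_0^\lambda \of M_0$ together give $M_0^\theta \of M_0$. For the clearance, the crux is the pointwise equality $j_0(f)(\kappa) = j(f)(\kappa)$ for every $f : \kappa \to \kappa$; once this is established the two clearances agree, because $f$ ranges over the same collection $\set{f \st f : \kappa \to \kappa}$ in both definitions and so the suprema coincide. To prove the equality I apply $k$: using $k \circ j_0 = j$, the elementarity of $k$, and $k(\kappa) = \kappa$, I compute
\[ k\bigl(j_0(f)(\kappa)\bigr) = k\bigl(j_0(f)\bigr)\bigl(k(\kappa)\bigr) = j(f)(\kappa). \]
Since $j(f)(\kappa) \leq \theta \leq \lambda < \mathrm{crit}(k)$, the $k$-image of the ordinal $j_0(f)(\kappa)$ lies below $\mathrm{crit}(k)$; as $k$ is order preserving and sends every ordinal $\geq \mathrm{crit}(k)$ strictly above $\mathrm{crit}(k)$, this forces $j_0(f)(\kappa) < \mathrm{crit}(k)$, so $k$ fixes it and $j_0(f)(\kappa) = j(f)(\kappa)$.

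The only real subtlety is in this last step: a priori there is no reason the ordinal $j_0(f)(\kappa)$ should itself lie below $\mathrm{crit}(k)$, but the computation shows that its $k$-image does, and since $k$ moves everything at or above its critical point strictly upward, that is enough to conclude the ordinal was fixed all along. With the pointwise equality in hand the suprema match, so the clearance of $j_0$ is $\theta$; combined with $M_0^\theta \of M_0$ and the fact that $j_0$ has critical point $\kappa$, this shows $j_0$ is a \hj\ embedding with clearance $\theta$. Beyond the hypotheses, the proof assumes only the two standard facts about the supercompactness factor map — closure of $M_0$ under $\lambda$-sequences and $\mathrm{crit}(k) > \lambda$ — each of which holds precisely because $U$ is the normal fine measure on $P_\kappa \lambda$ induced by the seed $j \image \lambda \in M$.
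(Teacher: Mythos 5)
Your proof is correct and follows essentially the same route as the paper's: both push the computation $k\bigl(j_0(f)(\kappa)\bigr) = j(f)(\kappa)$ through the factor map, use $j(f)(\kappa) < \theta \leq \lambda < \mathrm{crit}(k)$ to conclude $k$ fixes $j_0(f)(\kappa)$, and then read off the clearance and the $\theta$-closure from $\lambda$-closure of the ultrapower. Your write-up just makes explicit two points the paper leaves implicit (the closure $M_0^\lambda \of M_0$ and the order-preservation argument forcing $j_0(f)(\kappa) < \mathrm{crit}(k)$), which is fine.
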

\begin{diagram}[w=5em]
 V & \rTo^j
& M &  \\
\dTo<{j_0} & \ruTo>{k} & \\
M_0 & & 
\end{diagram}
\figlist{Factor embeddings of a \hj\ embedding} 

\begin{proof}
Let $f: \kappa \to \kappa$ be any function.
Referring to the diagram above, note that the critical point of $k$ is greater 
than $\lambda$. It follows that 
$$j(f)(\kappa) = (k\circ h)(f)(\kappa) = 
\big(k (h(f))\big)(k(\kappa)) = k(h(f)(\kappa)).$$

The ordinal $j(f)(\kappa)$ is less than $\theta$, since $\theta$ is the 
clearance of $j$. Therefore, again since $\crit(k) > \lambda$, it must be the 
case that $h(f)(\kappa) 
= j(f)(\kappa)$. Since $f$ was arbitrary, the embedding $j_0$ must have the same 
clearance as the embedding $j$, namely $\theta$. Since $\lambda \geq \theta$, it 
follows that $j_0$ is a \hj\ embedding. 
\qed
\end{proof}

%
%

Next, I provide a combinatorial characterization of 
\hj\ measures.
	
\begin{lemma} \label{theorem.HighJumpMeasure} 
Given an ordered pair of cardinals $(\kappa, \theta)$, the following are 
equivalent.

\begin{enumerate}
\item \label{item.HJemb}
There exists a \hj\ embedding $j: V \to M$ with critical point $\kappa$ 
such that $M^\theta \of M$ and the clearance of $j$ is at most $\theta$.

\item \label{item.HJMeasureLosChar}
There exists a 
normal fine measure $U$ on $P_\kappa \theta$ such 
that for every
function $f: \kappa \to \kappa$, the set $\set{A \in 
P_\kappa
\theta \st f(\ot(A \cap \kappa)) < \ot(A)}$ is a 
member of $U$. (The operator $\ot$ denotes order type.)
\end{enumerate}

\end{lemma}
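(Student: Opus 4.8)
The plan is to prove both implications by a direct appeal to {\L}o\'{s}'s theorem, reducing the combinatorial condition in (\ref{item.HJMeasureLosChar}) to the single inequality $j(f)(\kappa) < \theta$, which is exactly the assertion that the clearance is at most $\theta$. The entire argument rests on two order-type computations for the seed $j \image \theta$: that $\ot(j \image \theta) = \theta$ and that $\ot(j \image \theta \cap j(\kappa)) = \kappa$. The first holds because $j$ is order-preserving, and the second because $j(\xi) < j(\kappa)$ exactly when $\xi < \kappa$, so that $j \image \theta \cap j(\kappa) = j \image \kappa$, which has order type $\kappa$.

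For the implication from (\ref{item.HJemb}) to (\ref{item.HJMeasureLosChar}), I would let $U$ be the normal fine measure on $P_\kappa \theta$ induced via $j$ by the seed $j \image \theta$; this is well defined because $M^\theta \of M$ guarantees $j \image \theta \in M$. Fixing $f : \kappa \to \kappa$ and setting $X_f = \set{A \in P_\kappa \theta \st f(\ot(A \cap \kappa)) < \ot(A)}$, the definition of $U$ gives $X_f \in U$ iff $j \image \theta \in j(X_f)$, and by elementarity $j(X_f)$ is defined from $j(f)$ over $P_{j(\kappa)} j(\theta)$ exactly as $X_f$ is defined from $f$. Substituting $A = j \image \theta$ and applying the two order-type identities collapses membership to the requirement $j(f)(\kappa) < \theta$. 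This holds for every $f$, since $j(f)(\kappa)$ lies strictly below the clearance (the supremum defining the clearance is never attained, as otherwise replacing $f$ by $\alpha \mapsto f(\alpha) + 1$ would exceed it), and the clearance is at most $\theta$.

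For the converse, given $U$ I would form the induced ultrapower embedding $j_U : V \to M_U$. Standard facts about ultrapowers by normal fine measures on $P_\kappa \theta$ yield $\crit(j_U) = \kappa$, $M_U^\theta \of M_U$, and $j_U \image \theta = [A \mapsto A]_U$, whence the same two computations give $[A \mapsto \ot(A \cap \kappa)]_U = \kappa$ and $[A \mapsto \ot(A)]_U = \theta$. Then {\L}o\'{s}'s theorem translates the hypothesis $X_f \in U$ into $j_U(f)(\kappa) < \theta$ for every $f : \kappa \to \kappa$. Taking the supremum over all such $f$ shows the clearance of $j_U$ is at most $\theta$, and together with $M_U^\theta \of M_U$ this exhibits $j_U$ as a \hj\ embedding witnessing (\ref{item.HJemb}).

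The routine parts are the standard representation facts for ultrapowers and the well-definedness of the seed measure, which I would cite rather than reprove. The one step demanding genuine care is the pair of order-type identities $\ot(j \image \theta) = \theta$ and $\ot(j \image \theta \cap j(\kappa)) = \kappa$, together with the observation that the clearance is never attained, which is what upgrades $j(f)(\kappa) \le \theta$ to the strict inequality $j(f)(\kappa) < \theta$ in the boundary case where the clearance equals $\theta$. Everything else is bookkeeping around {\L}o\'{s}'s theorem, and once these identities are in place the two implications are mirror images of one another.
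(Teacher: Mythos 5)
Your proof is correct and takes essentially the same approach as the paper's: the paper establishes this lemma by precisely this routine {\L}o\'{s}-theorem computation --- reducing the combinatorial set to the assertion $j(f)(\kappa) < \theta$ via the two order-type identities for the seed $j \image \theta$ together with the non-attainment of the clearance --- deferring the details to the author's dissertation. The only organizational difference is that the paper's sketch also invokes the factor-embedding lemma (lemma \ref{lemma.FactorEmbeddingEquality}) to transfer the clearance bound to the ultrapower generated by $U$, whereas you verify membership in $U$ directly from the seed definition; the two calculations are the same in substance.
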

\begin{proof}
The proof consists of a straightforward argument using the \Los Theorem and 
lemma \ref{lemma.FactorEmbeddingEquality}. For the details, see \cite[lemma 
55]{Perlmutter2013:Dissertation}. 
\qed
\end{proof}

Next, I define the \ahj\ cardinals by a slight weakening of the 
closure property used for defining \hj\ cardinals. An \ahj\ 
cardinal is to a \hj\ cardinal as an almost-huge cardinal is to 
a huge cardinal.

\begin{definition} A 
cardinal 
$\kappa$ is \textbf{almost high jump} if 
and only if there exists an elementary embedding $j: V \to M$ with critical 
point $\kappa$ and clearance $\theta$ such that $M^{<\theta} \of M$. Such an 
embedding is called an \ahj\ embedding for $\kappa$.

\end{definition}

Another way to look at the definition of an \ahj\ cardinal is as follows. The 
cardinal $\kappa$ is almost high jump if and only if there exists an elementary 
embedding 
$j: V 
\to M$ with critical point $\kappa$ such that for every function $f: \kappa \to 
\kappa$, the closure property $M^{j(f)(\kappa)} \of M$ holds.
The \ahj\ cardinals have a combinatorial characterization in terms 
of coherent sequences of normal fine measures. See \cite[lemma 
57]{Perlmutter2013:Dissertation} for details.

Weakening the definition of an \ahj\ cardinal to allow for 
distinct embeddings to 
witness closure with respect to distinct functions $f: \kappa 
\to \kappa$
produces the definition of a Shelah-for-supercompactness cardinal.  The 
analogue 
of this definition for strongness (in place of 
supercompactness) was 
originally formulated by Shelah. 

\begin{definition}  A 
cardinal $\kappa$ is \textbf{Shelah for 
supercompactness} 
if and only if for every function $f: \kappa \to 
\kappa$, there is 
an elementary embedding $j: V \to M$ such that 
$M^{j(f)(\kappa)} \of 
M$.
\end{definition}
Note that an \ahj\ cardinal is a uniform version of a Shelah for 
supercompactness 
cardinal --- with an \ahj\ cardinal, one embedding must be the witness for 
every 
$f$ uniformly, whereas with a Shelah-for-supercompactness cardinal, each 
function $f$ may have a separate witnessing embedding.

One might want to define an almost-Shelah-for-supercompactness cardinal by 
tweaking the above definition to require that the 
closure of the target model is 
only $<\!\!j(f)(\kappa)$. However, this definition 
is actually 
equivalent to a Shelah-for-supercompactness cardinal, because of the following 
argument. Let 
$g: \kappa \to 
\kappa$ be given by $g(\alpha) = f(\alpha)^+$. If $j: V \to M$ is an elementary 
embedding with critical point $\kappa$ such that
$M^{<j(g)(\kappa)} \of M$, then $M^{j(f)(\kappa)} \of M$ as well.

In \cite[p.201]{Hamkins98:AsYouLikeIt}, Hamkins defines a \hj\ function as 
follows.  A \textbf{\hj\ function} for a 
(partially) supercompact cardinal 
$\kappa$ is a function $f \from \kappa \to \kappa$ such that $j(f)(\kappa) > 
\lambda$ 
whenever $j$ is a $\lambda$-supercompactness embedding on 
$\kappa$. Hamkins allows for partial functions, but any partial \hj\ function 
can be extended to a total \hj\ function, so I will assume without 
loss of generality that \hj\ functions are total. Furthermore, I will 
extend the definition of a \hj\ function to the vacuous case where $\kappa$ has 
no supercompactness by saying that in this case, there exists a \hj\ function 
for 
$\kappa.$ The following proposition shows that the existence of a \hj\ function 
for a cardinal $\kappa$ is actually an anti-large-cardinal property.

\begin{proposition} \label{proposition.HJfunction}
Let $\kappa$ be a cardinal. Then there exists a \hj\ function for $\kappa$ 
if and only if $\kappa$ is not Shelah for supercompactness.
\end{proposition}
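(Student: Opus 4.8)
The plan is to prove both directions of the biconditional by analyzing the relationship between high-jump functions and supercompactness embeddings witnessing the Shelah-for-supercompactness property. Recall that $\kappa$ is Shelah for supercompactness if and only if for every $f: \kappa \to \kappa$ there is an embedding $j: V \to M$ with $M^{j(f)(\kappa)} \of M$; and a \hj\ function for $\kappa$ is an $f: \kappa \to \kappa$ such that $j(f)(\kappa) > \lambda$ for every $\lambda$-supercompactness embedding $j$ on $\kappa$. The two notions are manifestly dual, so the proof should be a direct unwinding of the definitions, and the key is to track the bookkeeping between the closure ordinal $j(f)(\kappa)$ and the degree of supercompactness $\lambda = |V_\lambda|$ or the relevant seed.

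First I would handle the contrapositive of the forward direction: assume $\kappa$ \emph{is} Shelah for supercompactness and show that no \hj\ function exists, equivalently that every $f: \kappa \to \kappa$ \emph{fails} to be a \hj\ function. Given any candidate $f$, define an auxiliary function $g$ (for instance $g(\alpha) = $ some cardinal dominating $f(\alpha)$ together with $\alpha$) so that $j(g)(\kappa)$ dominates both $j(f)(\kappa)$ and $\kappa$; then apply the Shelah-for-supercompactness property to $g$ to obtain an embedding $j: V \to M$ with $M^{j(g)(\kappa)} \of M$. Setting $\lambda = j(f)(\kappa)$ (or a suitable cardinal just above it), the closure $M^{\lambda} \of M$ makes $j$ induce, via the seed $j \image \lambda$, a genuine $\lambda$-supercompactness embedding on $\kappa$. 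Because this factor embedding agrees with $j$ on functions $\kappa \to \kappa$ up to the relevant level, its value at $f$ is $\leq \lambda$, witnessing that $f(\kappa) \not> \lambda$ under the supercompactness embedding, so $f$ is not a \hj\ function. Since $f$ was arbitrary, no \hj\ function exists.

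For the converse I would prove the contrapositive: assume $\kappa$ is \emph{not} Shelah for supercompactness and construct a \hj\ function. Failure of the property yields a witnessing function $f_0: \kappa \to \kappa$ such that \emph{no} embedding $j: V \to M$ satisfies $M^{j(f_0)(\kappa)} \of M$. I would then argue that $f_0$ (possibly after a harmless modification, e.g.\ replacing $f_0(\alpha)$ by $f_0(\alpha)^+$ or closing off under the relevant operations as in the paper's remark about the $g(\alpha) = f(\alpha)^+$ trick) is a \hj\ function: if $j$ is any $\lambda$-supercompactness embedding on $\kappa$, then $M^\lambda \of M$, so if $j(f_0)(\kappa) \leq \lambda$ we would contradict the choice of $f_0$; hence $j(f_0)(\kappa) > \lambda$, which is exactly the \hj-function condition. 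The vacuous case where $\kappa$ has no supercompactness at all is covered by the paper's convention that a \hj\ function automatically exists there, and one must check that non-Shelah-for-supercompactness is consistent with that convention.

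The main obstacle I expect is the alignment between the ordinal $j(f)(\kappa)$ appearing in the closure hypothesis and the parameter $\lambda$ measuring the degree of supercompactness — in particular ensuring that from a high degree of closure $M^{j(g)(\kappa)} \of M$ one can extract a $\lambda$-supercompactness embedding for the \emph{correct} $\lambda$ near $j(f)(\kappa)$, and that passing to the supercompactness factor embedding does not disturb the value $j(f)(\kappa)$. This is precisely where lemma \ref{lemma.FactorEmbeddingEquality} (the factor-embedding agreement on functions $\kappa \to \kappa$ below the clearance) should do the heavy lifting, guaranteeing that the factor embedding computes $f$ the same way as $j$ does, so the two quantities can be compared cleanly. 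The auxiliary-function adjustment (the $^+$ or cardinal-successor trick) is the tool for absorbing the off-by-one discrepancies between ``$\leq$'' and ``$<$'' and between closure under sequences of length $\lambda$ versus $<\lambda$.
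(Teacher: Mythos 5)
Your proposal is correct and follows essentially the same route as the paper: the paper simply observes that the existence of a \hj\ function is the literal logical negation of the Shelah-for-supercompactness property, and your two contrapositive directions are exactly that unwinding spelled out. The factor-embedding step you include (to handle $\lambda$-supercompactness embeddings defined via normal fine measures on $P_\kappa\lambda$ rather than via closure) is precisely the refinement the paper relegates to its footnote, so nothing in your argument diverges in substance.
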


\begin{proof}
The proof follows immediately from the definitions. The cardinal $\kappa$ is 
Shelah for 
supercompactness if and only if
$$(\forall f: \kappa \to \kappa) (\exists  j: V 
\to M \text{ with critical point $\kappa$) such that } M^{j(f)(\kappa)} \of M$$
The logical negation of this statement is
$$(*) \qquad (\exists f: \kappa \to \kappa) (\forall  j: V 
\to M \text{ with critical point $\kappa$) } M^{j(f)(\kappa)} \nsubseteq M $$

The formula (*) asserts that $f$ is a \hj\ function for $\kappa$.\footnote{If 
one requires that a 
$\lambda$-supercompactness embedding be generated by a normal fine measure on 
$P_\kappa\lambda$ rather than simply defining such embeddings by the closure of 
the target model, then a factor embedding argument is required. See 
\cite[proposition 59]{Perlmutter2013:Dissertation} for details.}
\qed
\end{proof}

The Shelah-for-supercompactness cardinals have an 
ultrafilter characterization similar to that for 
high-jump cardinals, given by the following 
corollary to lemma \ref{theorem.HighJumpMeasure}.

\begin{corollary}
A cardinal $\kappa$ is Shelah-for-supercompactness 
if 
and only if 
for every function $f: \kappa \to \kappa$, there is 
a 
cardinal 
$\theta$ and a normal fine measure $U$ on $P_\kappa 
\theta$ such 
that the set $\set{A \in P_\kappa
\theta \st f(\ot(A \cap \kappa)) < \ot(A)}$ is a 
member of $U$.
\end{corollary}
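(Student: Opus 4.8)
The plan is to prove the biconditional directly, treating a single function $f\colon\kappa\to\kappa$ at a time and realizing every measure as a seed measure, so that the argument runs in close parallel to the proof of lemma~\ref{theorem.HighJumpMeasure}; the one new ingredient is an absoluteness remark about successor cardinals of the target model. Throughout I abbreviate $S_f = \set{A \in P_\kappa\theta \st f(\ot(A\cap\kappa)) < \ot(A)}$, and I repeatedly use the following seed computation: if $U$ is the normal fine measure on $P_\kappa\theta$ induced via an embedding $j$ by the seed $j\image\theta$, then, since $\ot(j\image\theta)=\theta$ and $\ot(j\image\theta\cap j(\kappa))=\ot(j\image\kappa)=\kappa$, the \Los\ theorem gives $S_f\in U$ if and only if $j(f)(\kappa)<\theta$.

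For the direction from the combinatorial condition to Shelah-for-supercompactness, I would fix $f$, take the $\theta$ and $U$ supplied by the hypothesis with $S_f\in U$, and form the ultrapower embedding $j_U\colon V\to M_U$. Because $U$ is a normal fine measure on $P_\kappa\theta$, its seed $[\mathrm{id}]_U$ equals $j_U\image\theta$ and $M_U^\theta\of M_U$. The seed computation (now read off the measure rather than used to define it) yields $j_U(f)(\kappa)<\theta$, so $M_U$, being closed under $\theta$-sequences, is a fortiori closed under $j_U(f)(\kappa)$-sequences; thus $M_U^{j_U(f)(\kappa)}\of M_U$ and $j_U$ witnesses Shelah-for-supercompactness for $f$. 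Since $f$ was arbitrary, $\kappa$ is Shelah for supercompactness. No adjustment of $f$ is needed here.

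For the converse I would fix $f$ and apply Shelah-for-supercompactness not to $f$ but to the inflated function $g(\alpha)=\max\big(f(\alpha),\alpha\big)^+$, obtaining $j\colon V\to M$ with $M^{j(g)(\kappa)}\of M$. Writing $\mu=\max\big(j(f)(\kappa),\kappa\big)$ and $\theta=j(g)(\kappa)$, elementarity gives $\theta=\mu^{+M}$. I would then let $U$ be the normal fine measure on $P_\kappa\theta$ induced via $j$ by the seed $j\image\theta$; the seed computation reduces $S_f\in U$ to $j(f)(\kappa)\le\mu<\mu^{+M}=\theta$, which holds, so $S_f\in U$ as required.

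The step I expect to be the main obstacle is justifying that this construction is legitimate, namely that $\theta$ is a genuine cardinal of $V$ (as the statement demands) and that the seed $j\image\theta$ really belongs to $M$. These pull in opposite directions, since $S_f\in U$ forces $\theta>j(f)(\kappa)$ while $j\image\theta\in M$ forces $\theta$ to lie below the closure degree of $M$; this is exactly why one inflates $f$ to $g=f^+$, so that $M$ is closed under $\mu^{+M}$-sequences. The standard argument then shows $M$ computes this successor correctly: any $V$-surjection from some $\delta\le\mu$ onto $\mu^{+M}$ would be a sequence of length $\delta<\mu^{+M}=j(g)(\kappa)$ of ordinals lying in $M$, hence would belong to $M$ by closure, contradicting that $\mu^{+M}$ is a cardinal of $M$; combined with the downward absoluteness of cardinals from $V$ to $M$ this gives $\theta=\mu^{+M}=\mu^{+V}$. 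Thus $\theta$ is a $V$-cardinal above $\kappa$ (as $\mu\ge\kappa$), and $j\image\theta$, being a $\theta$-sequence of elements of $M$ with $\theta=j(g)(\kappa)$, lies in $M$, completing the verification.
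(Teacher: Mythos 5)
Your proposal is correct and follows essentially the route the paper indicates for this corollary: a \Los-theorem seed computation reducing membership of $S_f$ in $U$ to $j(f)(\kappa)<\theta$, combined with the successor-inflation trick $f\mapsto f^+$ (which the paper itself uses in its remark that almost-Shelah-for-supercompactness equals Shelah-for-supercompactness) to guarantee enough closure for the seed $j\image\theta$ to lie in $M$. Your additional verification that $\theta=\mu^{+M}$ is computed correctly and is therefore a genuine cardinal of $V$ is exactly the right care at the one point the paper leaves to its cited reference.
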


\begin{proof}
The proof is very similar to that of lemma
\ref{theorem.HighJumpMeasure} and is given in \cite[corollary 
60]{Perlmutter2013:Dissertation}.
\qed
\end{proof}

\section[The clearance and superstrongness]{The clearance, superstrongness 
embeddings, and related embeddings} 
\label{section.SuperstrongAndClearance}

In the large cardinal literature, a cardinal $\kappa$ is \textbf{superstrong} 
if 
and only if there 
exists an elementary embedding $j: V \to M$ such that $V_{j(\kappa)} \of M$. 
 A cardinal 
$\kappa$ is 
\textbf{almost huge} if and only if there exists an elementary embedding $j: V 
\to M$ such that $M^{<j(\kappa)} \of M$.
The chart in the introduction shows that an almost-huge 
cardinal is much stronger in consistency strength than a \hj\ cardinal. 
Remarkably, the analogous situation does not hold in the case of strongness. In 
theorem 
\ref{theorem.SuperstrongCharacterization}, I will show that a 
superstrong cardinal is equivalent to a high-jump-for-strongness 
cardinal.
Before proving this result, I will prove some facts about the clearance of 
an embedding and about \ahj\ embeddings. I begin with the following lemma. 

\begin{lemma}\label{lemma.clearanceBasicsGeneralEmbedding} 
\label{lemma.CHnotAchieved}
Let $j: V \to M$ be an elementary embedding with critical point $\kappa$ and 
clearance $\theta$. Then the following conclusions are true.
\begin{itemize} 
\item
There is no function $f : \kappa \to \kappa$ such that 
$j(f)(\kappa) = \theta.$ 
\item
The ordinal $\theta$ is a $\beth$ fixed 
point in $M$, that is to say, $\beth_\theta^M = \theta$.
\item
The inequality 
$\kappa^+ 
\leq \cof(\theta) 
\leq 2^\kappa$ holds in $V$.
\end{itemize}
\end{lemma}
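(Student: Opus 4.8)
The plan is to treat the three bullets in sequence, since the first one is used repeatedly to justify the others. Throughout I will exploit that the critical point $\kappa$ is measurable, hence both regular and a strong limit, and that $\theta$ is by definition the clearance $\supjf$.

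For the first bullet I would argue that the supremum defining $\theta$ is never attained. Suppose toward a contradiction that $j(f)(\kappa) = \theta$ for some $f : \kappa \to \kappa$, and set $g(\alpha) = f(\alpha) + 1$; this is again a function $\kappa \to \kappa$ since $\kappa$ is a limit ordinal. By elementarity $j(g)(\kappa) = j(f)(\kappa) + 1 = \theta + 1$, which contradicts the fact that $\theta$ is an upper bound for all the values $j(h)(\kappa)$. Since $\theta$ is thus a supremum that is not attained, it is automatically a limit ordinal, a fact I will use below.

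For the second bullet I would show that $\beth_\gamma^M < \theta$ for every $\gamma < \theta$; because $\theta$ is a limit and $\beth^M$ is continuous, this gives $\beth_\theta^M = \sup_{\gamma < \theta} \beth_\gamma^M \leq \theta$, while $\beth_\theta^M \geq \theta$ holds trivially since $\beth_\gamma^M \geq \gamma$. So fix $\gamma < \theta$ and choose $f : \kappa \to \kappa$ with $\gamma < j(f)(\kappa)$. The key move is to set $g(\alpha) = \beth_{f(\alpha)}$, computed in $V$: because $\kappa$ is a strong limit, $\beth_{f(\alpha)} < \kappa$ whenever $f(\alpha) < \kappa$, so $g$ genuinely maps $\kappa$ into $\kappa$. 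Elementarity then yields $j(g)(\kappa) = \beth_{j(f)(\kappa)}^M$, and by the first bullet $j(g)(\kappa) < \theta$. Since $\beth^M$ is monotone and $\gamma < j(f)(\kappa)$, I conclude $\beth_\gamma^M \leq \beth_{j(f)(\kappa)}^M = j(g)(\kappa) < \theta$, as required.

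For the third bullet I would bound the cofinality from both sides. The upper bound is immediate: $\theta$ is the supremum of the set $\set{j(f)(\kappa) \st f : \kappa \to \kappa}$, which has size at most $\kappa^\kappa = 2^\kappa$, so $\cof(\theta) \leq 2^\kappa$. For the lower bound I would suppose $\cof(\theta) \leq \kappa$, fix an increasing cofinal sequence $\langle \alpha_i : i < \lambda \rangle$ with $\lambda = \cof(\theta) \leq \kappa$, and choose $f_i : \kappa \to \kappa$ with $j(f_i)(\kappa) > \alpha_i$. Defining the diagonal function $f(\xi) = \sup\set{f_i(\xi) \st i \leq \xi}$, regularity of $\kappa$ keeps $f(\xi) < \kappa$; and since $f(\xi) \geq f_i(\xi)$ for all $\xi \geq i$, elementarity gives $j(f)(\kappa) \geq j(f_i)(\kappa) > \alpha_i$ for every $i$, so $j(f)(\kappa) \geq \theta$. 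This contradicts the first bullet, establishing $\cof(\theta) \geq \kappa^+$. All of these are standard factor-embedding manipulations; the step demanding the most ingenuity is the second bullet, where one has to hit on the auxiliary function $g = \beth \circ f$ and invoke the strong-limit property of $\kappa$ to keep it inside $\kappa^\kappa$, while the lower cofinality bound is the other place to be watchful, since the diagonalization must be arranged so that the resulting $f$ still dominates each $f_i$ at $\kappa$ under $j$.
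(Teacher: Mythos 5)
Your proposal is correct and follows essentially the same route as the paper's proof: the successor-function trick for non-attainment, the auxiliary function $g(\alpha) = \beth_{f(\alpha)}$ for the $\beth$ fixed point, the counting of $\kappa^\kappa$ for the upper cofinality bound, and a diagonal function for the lower bound. Your added care about $\kappa$ being a strong limit (so that $g$ maps $\kappa$ into $\kappa$) and your packaging of the final contradiction via non-attainment rather than directly against the supremum are only cosmetic refinements of the same argument.
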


\begin{proof}
To prove the first conclusion, suppose to the contrary that $f$ is a function 
such that $j(f)(\kappa) = 
\theta.$ Let $g: \kappa \to \kappa$ 
be defined by $g(\alpha) = f(\alpha) + 1$. Then $j(g)(\kappa) = \theta +1 > 
\theta$, contradicting the definition of the clearance.

Next, I will show that $\beth_\beta^M< \theta$ for all ordinals $\beta < 
\theta$, so that $\beth_\theta^M = \theta$.
Let $\beta < \theta$. Then there exists a function $f: 
\kappa \to \kappa$ 
such that $j(f)(\kappa) \geq \beta.$ Let the function $g: \kappa \to \kappa$ be 
given by $g(\alpha) = \beth_{f(\alpha)}$. Then $\beth_\beta^M \leq j(g)(\kappa) 
< 
\theta$. It follows that $\theta$ is a $\beth$ fixed point in $M$.

The cofinality of the clearance $\theta$ must be at most $2^\kappa$, because 
the 
clearance is defined as the 
supremum of a set indexed by functions from $\kappa$ to $\kappa$, of which 
there 
are $2^\kappa$ many.

Finally, I show that $\cof(\theta) \geq \kappa^+$ by a 
diagonalization argument. Suppose to the contrary that $\cof(\theta) \leq 
\kappa.$ Then there is a sequence 
$\<f_\alpha>_{\alpha<\kappa}$ of functions on $\kappa$ such that $\theta = 
\sup\set{j(f_\alpha)(\kappa) \st \alpha < \kappa}.$ Define a function $g: 
\kappa 
\to 
\kappa$ diagonalizing over these functions. That is to say, given $\beta< 
\kappa$, let 
$g(\beta) = \sup\set{f_\alpha(\beta) + 1 \st \alpha \leq \beta} .$ Then 
$j(f_\alpha)(\kappa) < j(g)(\kappa) < \theta$ for every $\alpha < \kappa$, 
contradicting the assumption that $\theta = \sup\set{j(f_\alpha)(\kappa) \st 
\alpha < \kappa}$.
\qed
\end{proof}


The next lemma applies the result of lemma \ref{lemma.CHnotAchieved} in the 
case 
that $j$ is an \ahj\ embedding.

\begin{lemma}\label{lemma.ThetaStrongLimit}
\label{lemma.ClearanceFactsAHJEmbedding}
Suppose $j: V \to M$ is an \ahj\ embedding with critical point 
$\kappa$ and clearance $\theta$. Then the following conclusions are true in 
both 
$V$ 
and $M$.

\begin{itemize}
\item
The cardinal $\theta$ is a singular 
$\beth$ fixed point.
\item 
The inequality 
$\kappa^+ 
\leq \cof(\theta) 
\leq 2^\kappa$ holds. 
\item
The cardinal exponentiation identity $\theta^{\kappa} = \theta$ holds.
\end{itemize}
\end{lemma}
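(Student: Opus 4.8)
The plan is to exploit the $<\!\theta$-closure of the target model to force $V$ and $M$ to agree on all the relevant structure below and at $\theta$, and then to combine this agreement with lemma \ref{lemma.CHnotAchieved}. That lemma already supplies the $M$-side facts I need for free, namely $\beth_\theta^M = \theta$ (indeed $\beth_\beta^M < \theta$ for every $\beta < \theta$) together with the $V$-side cofinality bound $\kappa^+ \leq \cof(\theta) \leq 2^\kappa$. The new content of this lemma is therefore the transfer of the $\beth$-fixed-point property to $V$, the singularity of $\theta$ in both models, and the arithmetic identity $\theta^\kappa = \theta$.

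First I would record the consequences of $M^{<\theta} \of M$. Since $\kappa < \theta$, every subset of any $\gamma < \theta$ is a sequence of length $<\theta$ of elements of $M$ and so lies in $M$; thus $P(\gamma)^V = P(\gamma)^M$ for all $\gamma < \theta$. Applying the same closure to collapsing bijections shows that $V$ and $M$ assign the same cardinals $\leq \theta$: any bijection between two ordinals below $\theta$ has length $<\theta$ and so transfers into $M$, while $M \of V$ gives the reverse direction. Note also that $\theta$ is itself a cardinal, being equal to $\beth_\theta^M$.

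Next I would push the $\beth$-fixed-point property into $V$. Since $P(\gamma)^V = P(\gamma)^M$ forces $(2^\gamma)^V \leq (2^\gamma)^M$ for $\gamma < \theta$ (an $M$-bijection is a $V$-bijection), an induction on $\beta$ gives $\beth_\beta^V \leq \beth_\beta^M < \theta$ for every $\beta < \theta$, and taking the supremum over $\beta < \theta$ yields $\beth_\theta^V = \theta$. Thus $\theta$ is a $\beth$ fixed point, and hence a strong limit cardinal, in $V$ as well as in $M$. I would then upgrade the bound $\cof(\theta) \leq 2^\kappa$ from lemma \ref{lemma.CHnotAchieved} to the strict inequality $2^\kappa < \theta$: computing in $M$ one has $(2^\kappa)^M \leq \beth_{\kappa+1}^M < \theta$, and an $M$-bijection realizing $(2^\kappa)^M$ is a $V$-bijection, so $(2^\kappa)^V < \theta$ as well. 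Consequently $\cof(\theta) \leq 2^\kappa < \theta$, so $\theta$ is singular in $V$; moreover any $V$-cofinal map into $\theta$ now has length $<\theta$ and range contained in $M$, hence lies in $M$, so $\cof(\theta)$ is the same ordinal computed in $M$. This gives singularity in $M$ and simultaneously transports the bounds $\kappa^+ \leq \cof(\theta) \leq 2^\kappa$ into $M$.

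Finally, the third conclusion follows from standard cardinal arithmetic applied inside each model separately: $\theta$ is a strong limit cardinal there and $\kappa < \kappa^+ \leq \cof(\theta)$, so $\theta^\kappa = \theta$ holds in $V$ and in $M$. I expect the only real obstacle to be the bookkeeping of the $V$–$M$ transfer, in particular getting the inequality $\beth_\beta^V \leq \beth_\beta^M$ below $\theta$ in the correct direction (it is the closure, not merely $M \of V$, that makes $P(\gamma)$ agree and so reverses the naive inclusion) and pinning down the strict $2^\kappa < \theta$ rather than the weaker $\cof(\theta) \leq 2^\kappa$ of lemma \ref{lemma.CHnotAchieved}. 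Once these agreements are in hand, each of the three conclusions is immediate in both models.
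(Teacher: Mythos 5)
Your proposal is correct and follows essentially the same route as the paper's proof, which simply cites lemma \ref{lemma.CHnotAchieved} together with the observation that the $<\!\theta$-closure of $M$ makes it agree with $V$ on cofinalities and cardinal exponentiation below $\theta$, and then invokes the standard fact (Jech, Theorem 5.20) that a strong limit $\theta$ with $\cof(\theta) > \kappa$ satisfies $\theta^\kappa = \theta$. Your write-up just fills in the transfer details (power-set and cardinal agreement below $\theta$, the inequality $\beth_\beta^V \leq \beth_\beta^M$, and the absoluteness of $\cof(\theta)$) that the paper leaves implicit.
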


\begin{proof}
The proof follows from lemma \ref{lemma.CHnotAchieved}, along with the 
fact that $M$ is sufficiently closed so that it agrees with $V$ on cofinalities 
less than $\theta$ and on cardinal exponentiation below $\theta$.

To show that $\theta^\kappa = \theta$ in both $V$ and $M$, note that $\theta$ 
is 
a strong limit in both $V$ and $M$ and $\cof(\theta) > \kappa$ in both $V$ and 
$M$. The 
fact that $\theta^{\kappa} = \theta$ in both $V$ and $M$ then follows from a 
basic theorem of 
cardinal 
arithmetic (see \cite[theorem 5.20]{Jech:SetTheory3rdEdition}). 
\qed
\end{proof}

With these preliminaries out of the way, I now state the main theorem of 
this section.

\begin{theorem} \label{theorem.SuperstrongCharacterization}

A cardinal $\kappa$ is 
high jump for strongness if and only if $\kappa$ is superstrong. This fact 
follows from the following stronger but more technical result.

Let $\kappa$ be a cardinal.
Let $j: V \to M$ be a high-jump-for-strongness embedding with critical point 
$\kappa$ and 
clearance $\theta$. Then $V_\theta \elem M_{j(\kappa)}$, and $j$ has a factor 
embedding $h: V \to M'$ such that $h$ is a superstrongness embedding with 
critical point $\kappa$ and such tha
$h(\kappa) = \theta$.

\end{theorem}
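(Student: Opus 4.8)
The plan is to prove the stronger technical statement and then read off the equivalence at the end. Throughout, write $\theta$ for the clearance of the given \hj-for-strongness embedding $j\colon V \to M$, so that $V_\theta \of M$ and $\theta \leq j(\kappa)$ (each $j(f)(\kappa) < j(\kappa)$). Note also that $\kappa$ is measurable, hence inaccessible, being the critical point of $j$; this regularity will be needed below. The two conjuncts to establish are $V_\theta \elem M_{j(\kappa)}$ and the existence of the superstrong factor embedding $h$.

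First I would dispose of $V_\theta \elem M_{j(\kappa)}$. Because $V_\theta \of M$, the models $V$ and $M$ agree below rank $\theta$, so $V_\theta = (V_\theta)^M = M_\theta$, and the claim $M_\theta \elem M_{j(\kappa)}$ is exactly corollary \ref{Theorem.M_thetaElemInM_j(kappa)} applied to $j$. To keep this self-contained one can instead run the Tarski--Vaught test directly: given $\varphi(x,\bar a)$ with $\bar a \in M_\theta$ and a witness in $M_{j(\kappa)}$, fix $g\colon\kappa\to\kappa$ with $\bar a \in M_{j(g)(\kappa)}$ (possible since $\theta = \supjf$), and let $f(\xi)$ be the least rank sufficient to witness $\varphi$ over $V_\kappa$ for every parameter tuple in $V_{g(\xi)}$; then $f\colon\kappa\to\kappa$, and by elementarity $j(f)(\kappa)$ bounds the rank of a witness over $M_{j(\kappa)}$ for $\bar a$, while $j(f)(\kappa) < \theta$ since $\theta$ is the clearance, so the witness lies in $M_\theta$.

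The heart of the proof is the superstrong factor embedding, and here I expect the main obstacle. Let $E = \langle E_a \mid a \in [\theta]^{<\omega}\rangle$ be the $(\kappa,\theta)$-extender derived from $j$, let $h = j_E\colon V \to M_E$ be the induced ultrapower, and let $k\colon M_E \to M$ be the factor map, which, as usual for derived extenders, satisfies $k \circ h = j$, $k \restriction \theta = \mathrm{id}$, and $\crit(k) \geq \theta$. I claim $h(\kappa) = \theta$. For the lower bound, $k(h(\kappa)) = j(\kappa) \geq \theta$, so $h(\kappa) < \theta$ is impossible, as it would give $k(h(\kappa)) = h(\kappa) < \theta$. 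For the upper bound, take any $\beta < h(\kappa)$; by the \Los\ theorem it is represented as $\beta = h(F_0)(a)$ for some $a \in [\theta]^{<\omega}$ and $F_0\colon[\kappa]^{|a|}\to\kappa$, whence $k(\beta) = j(F_0)(a)$. The key lemma is that such a seed value stays below the clearance: choosing $g\colon\kappa\to\kappa$ with $a \of j(g)(\kappa)$ (possible since $\theta = \supjf$ and the functions $\kappa\to\kappa$ are closed under pointwise maxima) and setting $F(\xi) = \sup\set{F_0(u) \st u \in [g(\xi)]^{|a|}} + 1$, one gets $F\colon\kappa\to\kappa$ and, applying $j$ to this definition and evaluating at $\kappa$, $j(F_0)(a) < j(F)(\kappa) < \theta$. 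Thus $k(\beta) < \theta$, and since $\crit(k)\geq\theta$ this forces $\beta < \theta$. As $\beta$ was arbitrary, $h(\kappa) \leq \theta$.

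It remains to check that $h$ is a superstrongness embedding, that is, $V_{h(\kappa)} = V_\theta \of M_E$. This is immediate from $\crit(k) \geq \theta$: the embedding $k\colon M_E \to M$ fixes everything of rank below $\theta$, so $(V_\theta)^{M_E} = (V_\theta)^M = V_\theta$, the last equality because $V_\theta \of M$. Hence $h$ witnesses that $\kappa$ is superstrong with $h(\kappa) = \theta$, completing the technical result. The stated equivalence then follows at once: a \hj-for-strongness cardinal is superstrong via this factor embedding $h$, while conversely any superstrongness embedding $j\colon V \to M$ (so $V_{j(\kappa)} \of M$) is itself a \hj-for-strongness embedding, since its clearance $\theta$ satisfies $\theta \leq j(\kappa)$ and therefore $V_\theta \of V_{j(\kappa)} \of M$.
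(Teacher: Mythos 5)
Your reduction of the equivalence to the technical statement, your Tarski--Vaught argument for $V_\theta \elem M_{j(\kappa)}$ (which rightly avoids the circularity of citing corollary \ref{Theorem.M_thetaElemInM_j(kappa)}, itself a consequence of this theorem), and your computation that $h(\kappa)=\theta$ are all sound; the last of these is essentially the paper's own clearance computation, transposed into extender language. The gap is in the final step, where you assert that $\crit(k) \geq \theta$ ``immediately'' gives $(V_\theta)^{M_E} = (V_\theta)^{M}$. That general principle is false: an elementary $k: N \to M$ with large critical point need not fix sets of rank below $\crit(k)$, and $N$ and $M$ need not have the same sets of small rank. Concretely, let $j: V \to M$ be a $2^\kappa$-supercompactness embedding, let $U$ be the normal measure on $\kappa$ derived from $j$, and let $k: M_U \to M$ be the factor map of the ultrapower by $U$. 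Then $k(\kappa)=\kappa$, so $\crit(k) > \kappa+2$; but $U \in M$ by closure of $M$, while $U \notin M_U$ by the standard argument that a measure is not an element of its own ultrapower. Hence $V_{\kappa+2}^{M_U} \neq V_{\kappa+2}^{M}$, and indeed $k$ moves the set $V_{\kappa+2}^{M_U}$, a set of rank $\kappa+2$ below $\crit(k)$. What $\crit(k)\geq\theta$ actually yields is only the inclusion $(V_\theta)^{M_E} \of (V_\theta)^{M}$ you get from fixing generators, not the equality your superstrongness claim needs.

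The missing content is precisely the inclusion $V_\theta \of \mathrm{ran}(k)$: every $x \in V_\theta = M_\theta$ must be shown to have the form $j(F)(a)$ for some $a \in [\theta]^{<\omega}$ and $F \in V$, for only then does the transitive collapse $M_E$ of that hull contain $V_\theta$. This is exactly where the paper's proof does its real work. It fixes an enumeration $f: \kappa \to V_\kappa$ with $V_\alpha \of f \image \beth_\alpha$ for all $\alpha < \kappa$, so that $j(f)$ enumerates $M_{j(\kappa)}$ with $M_\alpha \of j(f) \image \beth_\alpha^M$, and then invokes lemma \ref{lemma.CHnotAchieved}, which says $\theta$ is a $\beth$ fixed point of $M$; consequently every element of $M_\theta = V_\theta$ is $j(f)(\alpha)$ for some $\alpha < \beth_\theta^M = \theta$, and so lies in the hull. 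Your extender formulation absorbs this argument verbatim (take $a = \set{\alpha}$), so your proof is fixable, but as written the superstrongness of $h$ --- the entire point of the factor construction --- rests on a false general principle rather than on this $\beth$-fixed-point computation.
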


\begin{proof}
Let $j: V \to M$ be a \hj-for-strongness embedding with 
critical point $\kappa$ 
and 
clearance $\theta$. 
I define the seed hull of 
$\theta$ in $M$, denoted by 
$X_{\theta}$, as follows. 

$$X_{\theta} = \set{j(f)(\alpha) \st \alpha < \theta \text{ and } f \in V 
\text{ is 
a 
function}}.
$$
The seed hull $X_{\theta}$ is an elementary substructure of $M$, and 
setting $M'$ equal to its Mostowski collapse yields the following 
commutative diagram of elementary embeddings of models of set theory, where $k$ 
is the inverse of the collapse map. 
\begin{diagram}[w=5em] 
V & \rTo^j & M\\
\dTo^{h} & \ruTo>k \\
M'
\end{diagram}
\figlist{Factor embeddings of a \hj\-for-strongness embedding}

Next, I will show that the critical point of $k$ is $\theta$, and $k(\theta) = 
j(\kappa)$. Since $k$ is the inverse of the Mostowski collapse of $X_\theta$, it 
suffices to show that the supremum of
the ordinals $\beta$ of $X_{\theta}$ below 
$j(\kappa)$ is $\theta$. Every such ordinal $\beta$ is of the form 
$j(f)(\alpha)$ for 
some ordinal $\alpha < \theta$ and some function $f: \kappa \to \kappa$. Fix 
such an ordinal $\alpha$ and function $f$. Since $\theta$ is 
the clearance of the embedding $j$, it follows that $\alpha < j(g)(\kappa)$ for 
some other function $g:\kappa \to \kappa$. Define yet another function $g': 
\kappa \to 
\kappa$ by $g'(\beta) = \sup\set{f(\gamma) \st \gamma < g(\beta)}$. By the 
elementarity of $j$, and since $\theta$ is the clearance of the embedding $j$, 
it follows that 
\begin{equation}\label{doohickie}
j(g')(\kappa) = \sup\set{j(f)(\gamma) \st 
\gamma < j(g)(\kappa)} < \theta
\end{equation} 
Considering the case $\gamma = \alpha$ in equation \ref{doohickie} above, it 
follows that $j(f)(\alpha) < \theta$. It follows that $\theta$ is the critical 
point of $k$ and that $k(\theta) = j(\kappa),$ as claimed. Since the diagram 
above 
commutes, it further follows that $h(\kappa) = \theta$. 

Next, I claim that $V_\theta \of M'$. Towards the proof of this claim, 
first recall that since $j$ is a \hj-for-strongness embedding, $V_\theta = 
M_\theta$. Next, let $f: \kappa 
\to V_\kappa$ be an 
enumeration of $V_\kappa$ in $V$ such that whenever $\alpha < \kappa$, it 
follows that  $V_\alpha \of f \image \beth_\alpha$. By lemma 
\ref{lemma.CHnotAchieved}, the ordinal $\theta$ is a $\beth$ fixed point in 
$M$, 
and so it follows from the definitions of $f$ and of $X_\theta$ that 
$M_\theta 
\of 
X_\theta$. Furthermore $M_\theta = V_\theta$, so $V_\theta \of X_\theta.$
Since 
$M'$ is the Mostowski collapse of $X_\theta$ in $M$, it follows that $V_\theta 
\of M'$, as claimed. 

Since $V_\theta \of M'$ and $h(\kappa) = \theta$, it follows that $h$ is a 
superstrongness embedding. Furthermore, the embedding $k$ witnesses that 
$V_\theta \elem M_{j(\kappa)}$. 
\qed
\end{proof}

A few easy corollaries to theorem \ref{theorem.SuperstrongCharacterization} 
follow. I will label the first corollary as a lemma, because it is a key fact 
about \ahj\ 
embeddings and will be used in many places in this paper.

\begin{lemma}\label{lemma.VThetaElemMjkappa}
Let $j: V \to M$ be an \ahj\ embedding for $\kappa$ 
with clearance 
 $\theta$. Then $V_\theta \satisfies \ZFC$ and 
$V_\theta \elem 
M_{j(\kappa)}$.
\end{lemma}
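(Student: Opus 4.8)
The plan is to show that the given \ahj\ embedding $j$ is in fact a \hj-for-strongness embedding, so that theorem \ref{theorem.SuperstrongCharacterization} applies directly and yields $V_\theta \elem M_{j(\kappa)}$. The remaining assertion that $V_\theta \satisfies \ZFC$ will then follow by transferring the corresponding fact about $M_{j(\kappa)}$ across this elementary equivalence.

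First I would verify that $V_\theta \of M$, which is precisely the closure demanded of a \hj-for-strongness embedding. The hypothesis supplies only the supercompactness-style closure $M^{<\theta} \of M$, so the work is to upgrade this to the strongness-style closure $V_\theta \of M$. The key extra input is that $\theta$ is a strong limit $\beth$ fixed point, which is provided by lemma \ref{lemma.ClearanceFactsAHJEmbedding}. I would argue by induction on $\alpha < \theta$ that $V_\alpha \in M$ and $V_\alpha = V_\alpha^M$. At each stage $V_\alpha$ has size $\beth_\alpha < \theta$, so enumerating it as a sequence of length $<\theta$ whose entries already lie in $M$ and applying $M^{<\theta} \of M$ places $V_\alpha$ itself in $M$. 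The successor step additionally requires that every subset of $V_\alpha$ land in $M$, which follows by the same enumeration argument applied to an arbitrary $A \of V_\alpha$ of size $\leq \beth_\alpha < \theta$; this simultaneously shows $\mathcal{P}(V_\alpha) = \mathcal{P}^M(V_\alpha)$. Taking the union over $\alpha < \theta$ then gives $V_\theta \of M$.

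Once $V_\theta \of M$ is established, $j$ is a \hj-for-strongness embedding with clearance $\theta$, and theorem \ref{theorem.SuperstrongCharacterization} yields $V_\theta \elem M_{j(\kappa)}$ immediately. To finish, I would observe that $\kappa$, being the critical point of an elementary embedding, is measurable and hence inaccessible, so by elementarity $j(\kappa)$ is inaccessible in $M$; therefore $M_{j(\kappa)} \satisfies \ZFC$. Transferring this fact across $V_\theta \elem M_{j(\kappa)}$ gives $V_\theta \satisfies \ZFC$, completing the argument. The main obstacle is the closure upgrade of the second paragraph: one must take care that the inductively constructed ranks agree between $V$ and $M$, so that the sequences witnessing membership in $M$ genuinely reconstruct $V_\alpha$ rather than some $M$-internal approximation. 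The rest is a direct appeal to the theorem and to elementarity.
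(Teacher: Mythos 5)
Your proposal is correct and follows essentially the same route as the paper's proof: both use the fact (lemma \ref{lemma.ClearanceFactsAHJEmbedding}) that the clearance $\theta$ is a $\beth$ fixed point to upgrade the closure $M^{<\theta} \of M$ to $V_\theta \of M$, then invoke theorem \ref{theorem.SuperstrongCharacterization} to get $V_\theta \elem M_{j(\kappa)}$, and finally obtain $\ZFC$ in $V_\theta$ from the inaccessibility of $j(\kappa)$ in $M$. The only difference is that you spell out the closure upgrade by induction on ranks, a step the paper compresses into a single sentence.
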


\begin{proof}
By lemma \ref{lemma.ThetaStrongLimit}, the clearance $\theta$ of $j$ is a 
$\beth$ fixed point. Therefore, since $j$ is a $\theta$-supercompactness 
embedding, it is also a $\theta$-strongness embedding, and thus a 
\hj-for-strongness embedding. It follows immediately 
from theorem \ref{theorem.SuperstrongCharacterization} that $V_\theta \elem 
M_{j(\kappa)}$. Moreover, since $j(\kappa)$ is inaccessible in $M$, it follows 
that $V_\theta \satisfies \ZFC$.
\qed
\end{proof}

\begin{corollary} 
\label{Theorem.M_thetaElemInM_j(kappa)}
\label{Corollary.N_thetaElemInN_j(kappa)}
Let $j: V \to M$ be an elementary embedding with 
critical point 
$\kappa$ and clearance $\theta$. Then $M_\theta \satisfies \ZFC$ and
$M_\theta \elem 
M_{j(\kappa)}$. 
\end{corollary}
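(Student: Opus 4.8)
The plan is to reuse the seed-hull factor-embedding construction from the proof of Theorem~\ref{theorem.SuperstrongCharacterization}, observing that its strongness hypothesis is invoked at only one identifiable step, which can be bypassed once the conclusion is stated about $M_\theta$ rather than the true $V_\theta$. Given an arbitrary elementary embedding $j: V \to M$ with critical point $\kappa$ and clearance $\theta$, I would form the seed hull
\[
X_\theta = \set{j(f)(\alpha) \st \alpha < \theta \text{ and } f \in V \text{ is a function}} \prec M,
\]
let $M'$ be its Mostowski collapse, and let $k: M' \to M$ be the inverse collapse, producing the usual commutative triangle with factor embedding $h: V \to M'$. The essential observation is that the computation in Theorem~\ref{theorem.SuperstrongCharacterization} establishing $\crit(k) = \theta$ and $k(\theta) = j(\kappa)$ uses only the definition of the clearance together with elementarity (through the diagonalizing function $g'$); it never appeals to any closure of $M$. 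Hence both identities hold for our arbitrary $j$.

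Next I would exploit that $\crit(k) = \theta$. Since elementary embeddings between transitive models agree below their critical point, $(V_\theta)^{M'} = (V_\theta)^{M} = M_\theta$, and $k$ fixes every element of $M_\theta$, each such element having rank below $\theta$. Because $k: M' \to M$ is elementary and $k(\theta) = j(\kappa)$, it restricts to an elementary embedding of $(V_\theta)^{M'} = M_\theta$ into $(V_{j(\kappa)})^{M} = M_{j(\kappa)}$; as this restriction is the identity, the inclusion $M_\theta \hookrightarrow M_{j(\kappa)}$ is elementary, that is, $M_\theta \elem M_{j(\kappa)}$.

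For the assertion $M_\theta \satisfies \ZFC$, I would note that $\kappa$, being the critical point of a nontrivial elementary embedding of $V$, is inaccessible; hence by elementarity $j(\kappa)$ is inaccessible in $M$, so $M_{j(\kappa)} = (V_{j(\kappa)})^{M} \satisfies \ZFC$. The elementarity $M_\theta \elem M_{j(\kappa)}$ just obtained then transfers this to $M_\theta \satisfies \ZFC$.

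The one point demanding care --- and the natural place at which this argument diverges from the original --- is isolating exactly where strongness entered Theorem~\ref{theorem.SuperstrongCharacterization}: there it served only to identify $(V_\theta)^{M'}$ with the genuine $V_\theta$, via $V_\theta = M_\theta \of M'$, yielding the stronger $V_\theta \elem M_{j(\kappa)}$. Without that hypothesis we forfeit the stronger conclusion but retain $M_\theta \elem M_{j(\kappa)}$ almost for free, since agreement of $M'$ and $M$ below $\crit(k) = \theta$ automatically delivers $(V_\theta)^{M'} = M_\theta$. I therefore expect the main obstacle to be conceptual rather than computational: confirming that the construction of $k$ and the identities $\crit(k) = \theta$ and $k(\theta) = j(\kappa)$ are genuinely independent of any closure assumption, after which the conclusion follows immediately from the standard fact that an elementary embedding restricts to the identity below its critical point.
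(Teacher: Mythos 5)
Your overall strategy is the same as the paper's: the paper proves this corollary precisely by rerunning the seed-hull argument of theorem \ref{theorem.SuperstrongCharacterization} and observing, as you do, that the computation establishing $\crit(k) = \theta$ and $k(\theta) = j(\kappa)$ uses only the definition of the clearance and elementarity, never the closure of $M$. The gap is in the step you substitute for the rest of that proof. You invoke ``the standard fact that an elementary embedding restricts to the identity below its critical point'' to conclude that $(V_\theta)^{M'} = (V_\theta)^{M} = M_\theta$ and that $k \restrict M_\theta = \mathrm{id}$. No such fact holds at this level of generality. What is true for an arbitrary elementary embedding $k: M' \to M$ between transitive models of \ZFC\ is that $k$ fixes every set whose \emph{hereditary cardinality computed in $M'$} lies below $\crit(k)$ (code the transitive closure by a set of ordinals below the critical point); sets merely of \emph{rank} below $\crit(k)$ need not be fixed, and $(V_{\crit(k)})^{M'} = (V_{\crit(k)})^{M}$ can fail. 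For instance, a generic-ultrapower embedding $k: N_0 \to N_1$ arising from a precipitous ideal on $\omega_1$ has critical point $\omega_1^{N_0}$, yet it moves $P(\omega)^{N_0}$ --- a set of rank $\omega+1$ --- since $N_1$ contains reals not in $N_0$. Passing from rank below $\theta$ to hereditary cardinality below $\theta$ requires $\theta$ to be a strong limit ($\beth$ fixed point) in $M'$, and that is exactly what your citation elides.

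The gap is repairable with facts you already have in hand. You noted that $j(\kappa)$ is inaccessible in $M$ but applied this only to the \ZFC\ clause; apply it also to the agreement step: since $k(\theta) = j(\kappa)$, elementarity of $k$ makes $\theta$ inaccessible in $M'$, hence $(V_\theta)^{M'} = H_\theta^{M'}$, the hereditary-cardinality version of the fact gives $k \restrict (V_\theta)^{M'} = \mathrm{id}$, and then $(V_\rho)^{M} = k\big((V_\rho)^{M'}\big) = (V_\rho)^{M'}$ for each $\rho < \theta$ yields $(V_\theta)^{M} = (V_\theta)^{M'}$, after which your concluding argument goes through. Alternatively --- and this is what the paper's ``same line of reasoning'' actually refers to --- bypass the general principle entirely: by lemma \ref{lemma.CHnotAchieved}, $\theta$ is a $\beth$ fixed point in $M$, so the enumeration argument in theorem \ref{theorem.SuperstrongCharacterization} (with $f: \kappa \to V_\kappa$ chosen so that $V_\alpha \of f \image \beth_\alpha$) shows directly that $M_\theta \of X_\theta$; since $M_\theta$ is transitive, the Mostowski collapse fixes it pointwise, giving $M_\theta \of M'$, $k \restrict M_\theta = \mathrm{id}$, and (using injectivity of $k$) $(V_\theta)^{M'} = M_\theta$. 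The remainder of your proposal --- that $k$ restricts to an elementary map of $(V_\theta)^{M'}$ into $M_{j(\kappa)}$, and that $\ZFC$ transfers down from $M_{j(\kappa)}$ because $j(\kappa)$ is inaccessible in $M$ --- is correct and matches the paper.
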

\begin{proof}
The same line of reasoning as in the proof of theorem 
\ref{theorem.SuperstrongCharacterization} shows that $M_\theta \elem 
M_{j(\kappa)}$, even without the assumption that the embedding $j$ has 
additional strength.
\qed
\end{proof}

\begin{corollary} \label{corollary.SuperstrongFactorofHJ}
Every \ahj\ embedding has a superstrongness factor embedding.
\end{corollary}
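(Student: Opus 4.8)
The plan is to reduce the statement to Theorem \ref{theorem.SuperstrongCharacterization} by observing that every \ahj\ embedding is secretly a \hj-for-strongness embedding. Let $j : V \to M$ be an \ahj\ embedding for $\kappa$ with clearance $\theta$, so that $M^{<\theta} \of M$ by definition. By Lemma \ref{lemma.ThetaStrongLimit}, the clearance $\theta$ is a $\beth$ fixed point, and in particular a strong limit cardinal.

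The key step is to upgrade the supercompactness-style closure below $\theta$ to strongness-style closure at $\theta$, i.e.\ to show $V_\theta \of M$; this is exactly the observation already used in the proof of Lemma \ref{lemma.VThetaElemMjkappa}. I would argue by induction on $\alpha < \theta$ that $V_\alpha \in M$ and $V_\alpha^M = V_\alpha$. At a limit stage, the sequence $\<V_\beta>_{\beta < \alpha}$ has length $<\theta$ and consists of elements of $M$, so $<\!\theta$-closure of $M$ places it, and hence its union $V_\alpha$, in $M$. At a successor stage $\alpha = \beta + 1$, each $S \of V_\beta$ has cardinality at most $\beth_\beta < \theta$ (here is where $\theta$ being a $\beth$ fixed point is essential); enumerating $S$ as a sequence of length $<\theta$ of elements of $V_\beta \of M$ and invoking $<\!\theta$-closure shows $S \in M$, whence $V_{\beta+1} = P(V_\beta) \of M$. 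Thus $V_\theta \of M$.

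With $V_\theta \of M$ established, $j$ is a \hj-for-strongness embedding with critical point $\kappa$ and clearance $\theta$, so Theorem \ref{theorem.SuperstrongCharacterization} applies verbatim and produces a factor embedding $h : V \to M'$ of $j$ that is a superstrongness embedding with $h(\kappa) = \theta$. This is precisely the desired superstrongness factor embedding.

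Since the substantive work is carried out by Theorem \ref{theorem.SuperstrongCharacterization}, and the \ahj-to-strongness conversion is already implicit in Lemma \ref{lemma.VThetaElemMjkappa}, there is no genuine obstacle here --- the result really is an easy corollary. The only point requiring any care is the strong-limit bookkeeping in the inductive closure argument, which rests essentially on the fact that $\theta$ is a $\beth$ fixed point.
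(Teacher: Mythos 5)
Your proposal is correct and takes essentially the same route as the paper: the corollary is deduced from Theorem \ref{theorem.SuperstrongCharacterization} by observing that every \ahj\ embedding is a \hj-for-strongness embedding, which is precisely the reduction carried out in the proof of Lemma \ref{lemma.VThetaElemMjkappa}. The only difference is that you spell out the $\beth$-fixed-point induction establishing $V_\theta \of M$, a step the paper compresses into the remark that a $\theta$-supercompactness embedding with $\theta$ a $\beth$ fixed point is also a $\theta$-strongness embedding.
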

\begin{proof}
This follows immediately from theorem 
\ref{theorem.SuperstrongCharacterization}, 
since every \ahj\ embedding is also a \hj-for-strongness embedding, as was shown 
in the proof of lemma \ref{lemma.VThetaElemMjkappa}.
\qed
\end{proof}

As a closing observation, note that analogues of many of the results in this 
section can 
be proven when $V$ is replaced by a more general model, $N$.

\section{Large cardinals strictly above a \Vopenka\ 
cardinal}\label{section.CardinalStrengths}

In the next few sections, I define the remaining large cardinals mentioned in 
the chart from the introduction, and I prove  
results 
about their consistency and implicational strengths. The sections are organized 
in order of strength in the large cardinal hierarchy. In the present section, I 
consider cardinals stronger than a \Vopenka\ cardinal but no stronger than an 
almost-huge cardinal.

I begin by defining the large cardinal notions that I will be analyzing in this 
section, starting with the \hj\ order and the \shj\ cardinals. These 
definitions 
are 
somewhat analogous to the definitions of the many times huge and superhuge 
cardinals, which are defined in 
\cite{BarbanelDipriscoTan84:nHugeSuperhuge}.

\begin{definition} \label{definition.shj}
Given an ordinal $\eta$,
the cardinal $\kappa$ has \textbf{\hjot{\eta}} if and
only if there exists a strictly increasing sequence 
$\<
\theta_\alpha \st \alpha < \eta >$ of ordinals such 
that for
each ordinal $\alpha < \eta$, there exists a \hj\ embedding for $\kappa$  with 
\supname\ 
$\theta_\alpha$.
 The cardinal $\kappa$ is \textbf{super high jump} if and only if there exist 
 \hj\ 
 embeddings for $\kappa$ of arbitrarily high clearance. (In other words, a 
\shj\ 
 cardinal $\kappa$ has
\hjot{\ORD}.)
\end{definition}

The \ahj\ order and the \sahj\ cardinals are defined similarly to the \hj\ 
order 
and the \shj\ cardinals, as follows.

\begin{definition}
Given an ordinal $\eta$, the cardinal $\kappa$ has \textbf{almost-high-jump 
order} 
$\eta$ if and
only if there exists  a 
strictly increasing 
sequence $\<
\theta_\alpha \st \alpha < \eta >$ of cardinals such 
that for
each ordinal $\alpha < \eta$, there exists an \ahj\ embedding for $\kappa$ with 
\supname\ $\theta_\alpha$. The cardinal 
$\kappa$ is \textbf{super almost high jump} 
if and only if there exist \ahj\ embeddings of arbitrarily high clearance for 
$\kappa$.
\end{definition}

It will also be interesting to consider \hj\ embeddings with \textbf{excess 
closure}, that is, embeddings $j: V \to M$ with 
clearance $\theta$ such that the target model $M$ is closed under sequences of 
length greater than $\theta$. For instance, \hj\ embeddings with clearance 
$\theta$ where the target model is closed under sequences of length $2^\theta$ 
will be fruitful objects of study. An extreme example of excess closure is as 
follows.

\begin{definition}\label{definition.StronglySHJ}
The cardinal $\kappa$ is \textbf{\hjp\ with unbounded excess closure} if and 
only if for 
some fixed clearance
$\theta$, for all cardinals $\lambda \geq \theta$, there is a \hj\ measure 
on 
$P_\kappa \lambda$ generating an embedding with clearance $\theta$.
\end{definition}

With all of the above definitions given, the time has come to prove many of the 
simpler 
consistency 
strength relations shown on the chart in the introduction, along with some 
additional related consistency strength relations that are not shown on the 
chart.

I begin with the following proposition, which involves a \hj\ embedding with a 
little bit of excess closure. This proposition is a simple example of 
the use of lemma \ref{lemma.VThetaElemMjkappa}, which will be used in 
more complicated arguments later.

\begin{proposition} \label{proposition.HJplus}
Suppose that there exists a pair of cardinals $(\kappa, \theta)$ such that 
there 
is a \hj\ embedding $j: V \to M$ with critical point $\kappa$ and clearance 
$\theta$ and such that $M^{2^\theta} \of M$. Then the cardinal $\kappa$ is 
\shjp\ 
in 
the model $V_\theta$, and the cardinal $\kappa$ has \hj\ order $\theta$ in $V$. 
Furthermore,  there are many \shj\ cardinals in the models $V_\kappa$, 
$V_\theta$, and $M_{j(\kappa)}$.
\end{proposition}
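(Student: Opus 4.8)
The plan is to extract from the excess closure $M^{2^\theta}\of M$ the one combinatorial object that drives everything: the \hj\ measure on $P_\kappa\theta$ itself. First I would record, using lemma \ref{lemma.ThetaStrongLimit}, that $\theta$ is a strong limit with $\theta^{<\kappa}=\theta$, so that $P_\kappa\theta$ has size $\theta$ and $|P(P_\kappa\theta)|=2^\theta$. Since $M^\theta\of M$, every subset of $P_\kappa\theta$ is coded by a $\theta$-sequence of elements of $M$ and hence lies in $M$; in particular $M$ computes $P(P_\kappa\theta)$ correctly and has the same functions $\kappa\to\kappa$ as $V$. Let $U$ be the normal fine \hj\ measure induced via $j$ by the seed $j\image\theta$. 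Then $U$ is a subset of $P(P_\kappa\theta)$ of size at most $2^\theta$ all of whose members lie in $M$, so the \emph{extra} hypothesis $M^{2^\theta}\of M$ is exactly what gives $U\in M$. Reading lemma \ref{theorem.HighJumpMeasure} inside $M$ then yields $M\satisfies$ ``$\kappa$ is \hjp\ with \supname\ $\theta$'', and since $\theta<j(\kappa)$ and $U\in M_{j(\kappa)}$, the same is witnessed inside $M_{j(\kappa)}$.

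The first conclusion, that $\kappa$ is \shjp\ in $V_\theta$, then follows by transfer across the elementary substructure relation $V_\theta\elem M_{j(\kappa)}$ of lemma \ref{lemma.VThetaElemMjkappa}. The crucial point is that inside either \emph{set} model a \hj\ embedding is the same thing as a \hj\ measure (lemma \ref{theorem.HighJumpMeasure}), so for a fixed ordinal parameter $\delta$ the assertion ``there is a \hj\ measure for $\kappa$ of \supname\ at least $\delta$'' is first order in the parameters $\kappa,\delta$. For every $\delta<\theta$ this holds in $M_{j(\kappa)}$, witnessed by $U$ (whose \supname\ is $\theta\geq\delta$), so by elementarity it holds in $V_\theta$; ranging over all $\delta<\theta=\ORD^{V_\theta}$ gives $V_\theta\satisfies$``$\kappa$ is \shjp''. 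Feeding this back through $V_\theta\elem M_{j(\kappa)}$ also gives $M_{j(\kappa)}\satisfies$``$\kappa$ is \shjp'', which I will reuse.

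For the second conclusion, \hjot{\theta} in $V$, I would first note that witnesses manufactured in $V_\theta$ are genuine witnesses in $V$: for any cardinal $\lambda<\theta$ the set $P(P_\kappa\lambda)$ and the collection of functions $\kappa\to\kappa$ have rank below the strong limit $\theta$ and are computed identically in $V$, $M$ and $V_\theta$, so a normal fine measure on $P_\kappa\lambda$ carrying the combinatorial property of lemma \ref{theorem.HighJumpMeasure} in $V_\theta$ is such a measure in $V$ and generates a genuine \hj\ embedding of $V$ with the same \supname. Now, because $j(\kappa)$ is inaccessible, hence regular, in $M$, the clearances witnessing $M_{j(\kappa)}\satisfies$``$\kappa$ is \shjp'' are unbounded in the regular ordinal $j(\kappa)$, whence $M_{j(\kappa)}\satisfies$``$\kappa$ has \hjot{\gamma}'' for every $\gamma<j(\kappa)$. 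Transferring each instance $\gamma<\theta$ down to $V_\theta$ and lifting the resulting measures to $V$ shows that $\kappa$ has at least $\gamma$ many distinct \hj\ clearances below $\theta$ in $V$, for every $\gamma<\theta$. Hence the set of \hj\ clearances of $\kappa$ below $\theta$ is a subset of the cardinal $\theta$ of cardinality $\theta$, so it has order type $\theta$, and its increasing enumeration is the sequence witnessing \hjot{\theta} for $\kappa$ in $V$.

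The third conclusion I would get by the usual reflection, and here I also expect the main obstacle. The normal measure $W=\set{X\of\kappa\st\kappa\in j(X)}$ has size $2^\kappa\leq 2^\theta$ and so lies in $M$; pulling the statement ``$V_{j(\kappa)}$ contains a \shj\ cardinal'' (true in $M$, since $M_{j(\kappa)}\satisfies$``$\kappa$ is \shjp'') back through $j$ shows that $V_\kappa=M_\kappa$ already contains a \shj\ cardinal, and normality of $W$ concentrates this on a stationary subset of $\kappa$; since the relevant witnesses have rank below $\kappa$ and $V_\kappa$, $V_\theta$, $M_{j(\kappa)}$ agree on $V_\kappa$ and on $P(\kappa)$, the same stationarily many \shj\ cardinals appear in each of the three models. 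The delicate part throughout is the interface between the class notion of super-high-jumpness and its first-order incarnation over the set models $V_\theta$ and $M_{j(\kappa)}$: the argument leans entirely on the measure characterization of lemma \ref{theorem.HighJumpMeasure} to make \shjp ness first order, on the fact that it is the \emph{excess} closure $M^{2^\theta}\of M$ rather than $M^\theta\of M$ that inserts the size-$2^\theta$ measure $U$ into $M$, and on the absoluteness of power sets and function spaces below the strong limit $\theta$. Keeping straight which \supname\ is being computed in which model, and verifying that it is preserved under each of these passages, is where the care will be required.
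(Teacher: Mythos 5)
Your handling of the first two conclusions is sound and tracks the paper's own proof: you isolate the factor measure $U$ on $P_\kappa\theta$ induced by the seed $j\image\theta$ (this step is exactly lemma \ref{lemma.HJFactor}, which you should cite for the claim that $U$ is a \hj\ measure with clearance $\theta$), you observe that the excess closure $M^{2^\theta}\of M$ is precisely what places $U$ in $M$, you transfer ``there are \hj\ measures of arbitrarily large clearance'' across $V_\theta \elem M_{j(\kappa)}$ via lemma \ref{lemma.VThetaElemMjkappa}, and you use absoluteness of \hj\ measures between $V_\theta$ and $V$ to convert ``\shjp\ in $V_\theta$'' into ``\hj\ order $\theta$ in $V$.'' Your appeal to the regularity of $j(\kappa)$ in $M$ plays the role that ``$V_\theta \satisfies \ZFC$'' (replacement inside the set model) plays in the paper's proof; both work.

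The genuine gap is in your final paragraph. Having produced stationarily many $\delta < \kappa$ that are \shjp\ \emph{in the sense of $V_\kappa$}, you claim these same cardinals are \shjp\ in $V_\theta$ and in $M_{j(\kappa)}$ ``since the relevant witnesses have rank below $\kappa$'' and the three models agree on $V_\kappa$ and $P(\kappa)$. That inference fails: being \shjp\ is not witnessed by any set of rank below $\kappa$; it asserts that the clearances of \hj\ measures for $\delta$ are unbounded \emph{in the ordinals of the ambient model}. A cardinal $\delta$ that is \shjp\ in $V_\kappa$ has clearances unbounded in $\kappa$, but nothing guarantees it has even one clearance in the interval $(\kappa,\theta)$, so it need not be \shjp\ in $V_\theta$ or in $M_{j(\kappa)}$. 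Indeed, the sensitivity of super-high-jumpness to the height of the model is the whole reason the proposition is phrased as ``\shjp\ in $V_\theta$'' but only ``\hj\ order $\theta$ in $V$'' (compare proposition \ref{proposition.HJO2}). The correct move, which is the paper's, is to transfer the \emph{sentence} rather than the individual cardinals: from ``$V_\kappa \satisfies$ there are unboundedly many \shj\ cardinals,'' the elementarity of $j$ together with $j(V_\kappa) = M_{j(\kappa)}$ yields that $M_{j(\kappa)}$ satisfies the same sentence (now about cardinals that are \shjp\ in $M_{j(\kappa)}$'s own sense), and $V_\theta \elem M_{j(\kappa)}$ then passes it down to $V_\theta$.
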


\begin{proof}
By lemma \ref{lemma.HJFactor}, there is a factor embedding, $h$, of $j$ such 
that $h$ has clearance $\theta$ and is generated by a \hj\ measure $U$ on 
$P_\kappa \theta$. By lemma 
\ref{lemma.ClearanceFactsAHJEmbedding}, the cardinal exponentiation identity 
$\theta^\kappa = \theta$ holds. It follows that the model $M$ is sufficiently 
closed so that $U \in M$.

In the model $M_{j(\kappa)}$, consider the set of cardinals $\lambda$ such that 
there is a \hj\ measure generating an embedding with 
critical point $\kappa$ and clearance $\lambda$. By lemma 
\ref{lemma.VThetaElemMjkappa}, the elementarity relation $V_\theta \elem 
M_{j(\kappa)}$ holds. It follows that if this set of cardinals is bounded
in the model $M_{j(\kappa)}$, then this bound is below $\theta$. But $\theta$ 
is 
an element of this set, since $U \in M$. 
Therefore, the set is unbounded in both $V_\theta$ and $M_{j(\kappa)}$, and in 
particular, $\kappa$ is a \shj\ cardinal in the model $M_{j(\kappa)}$. By 
reflection, there are many \shj\ cardinals in the model $V_\kappa$. By the 
elementarity of $j$ and since $V_\theta \elem M_{j(\kappa)}$, it follows that 
there are also many \shj\ cardinals in $M_{j(\kappa)}$ and in $V_\theta$. 
Finally, since $V_\theta \satisfies \ZFC$ and since every \hj\ measure of 
$V_\theta$ is 
also a \hj\ measure in $V$, it follows that the cardinal $\kappa$ has \hj\ 
order 
$\theta$ in $V$. 
\qed 
\end{proof}

In later similar consistency proofs, I will finish the proof with a conclusion 
about one of $M_{j(\kappa)}$, $V_\kappa$, or $V_\theta$, and 
leave it to the reader to work out the additional consequences in the other 
models.
Note that the hypothesis of proposition \ref{proposition.HJplus} is equivalent 
to the hypothesis that there for some pair $(\kappa, \theta)$, such that there 
is a \hj\ measure on $P_\kappa 2^\theta$. This alternative hypothesis follows 
immediately from the hypothesis of proposition \ref{proposition.HJplus}. For 
the 
converse, given a pair $(\kappa, \theta)$ such that there is a \hj\ measure 
on $P_\kappa 2^\theta$, the clearance of the corresponding embedding must be at 
most $\theta$. If the clearance of this embedding is some $\theta'< 
\theta$, then take a $2^{\theta'}$-supercompactness factor embedding and apply 
lemma \ref{lemma.HJFactor}.

Next, I will consider elementary embeddings for which the closure of the target 
model is extremely 
large compared with the clearance of the embedding, beginning with the
\hj\ cardinals with \uec.

\begin{proposition} \label{proposition.AlmostHuge>SSHJ}
Suppose the cardinal $\kappa$ is almost huge. Then in the model $V_\kappa$, 
there are many cardinals $\delta$ such that $\delta$ is \hjuecp\
\end{proposition}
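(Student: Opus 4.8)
The plan is to show that $\kappa$ itself is \hjuecp\ as computed inside the set model $M_{j(\kappa)}$, and then to reflect this property below $\kappa$ using the normal measure on $\kappa$ derived from $j$. Fix an almost-huge embedding $j \from V \to M$ with critical point $\kappa$, so that $M^{<j(\kappa)} \of M$, and let $\theta$ be the clearance of $j$. Since each value $j(f)(\kappa)$ lies below $j(\kappa)$, there are at most $2^\kappa < j(\kappa)$ such values, and $j(\kappa)$ is regular in $M$, so the clearance satisfies $\theta < j(\kappa)$; moreover $M^\theta \of M$, so $j$ is in particular a \hj\ embedding with clearance $\theta$, to which lemma \ref{lemma.ThetaStrongLimit} applies.

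The heart of the argument is to produce, for every cardinal $\lambda$ with $\theta \leq \lambda < j(\kappa)$, a \hj\ measure on $P_\kappa\lambda$ of clearance $\theta$ that belongs to $M_{j(\kappa)}$. First I would note that $M^{<j(\kappa)}\of M$ forces $P(\gamma)^M = P(\gamma)^V$ for all $\gamma < j(\kappa)$, so $M$ and $V$ agree on cardinals and on the power-set operation below $j(\kappa)$; in particular $j \image \lambda \in M$ and $2^\lambda < j(\kappa)$. Applying lemma \ref{lemma.HJFactor} to the seed $j\image\lambda$ yields a factor embedding whose generating normal fine measure $U_\lambda$ on $P_\kappa\lambda$ is a \hj\ measure of clearance exactly $\theta$. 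Since $U_\lambda \of P(P_\kappa\lambda)$ has rank below $j(\kappa)$ and $P(P_\kappa\lambda)^V = P(P_\kappa\lambda)^M$ has size below $j(\kappa)$, the closure $M^{<j(\kappa)} \of M$ puts $U_\lambda \in M$, hence $U_\lambda \in M_{j(\kappa)}$. Because the combinatorial characterization of a \hj\ measure in lemma \ref{theorem.HighJumpMeasure} only quantifies over functions $f\from\kappa\to\kappa$ and subsets of $P_\kappa\lambda$ --- all of which lie in $M_{j(\kappa)}$ --- the model $M_{j(\kappa)}$ correctly recognizes each $U_\lambda$ as a \hj\ measure of clearance $\theta$. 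As the cardinals of $M_{j(\kappa)}$ that are $\geq\theta$ are exactly those in the interval $[\theta, j(\kappa))$, this shows $M_{j(\kappa)} \satisfies$ ``$\kappa$ is \hjuecp'' with the single fixed clearance $\theta$, as required by definition \ref{definition.StronglySHJ}.

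Finally I would reflect. Let $\Phi(\delta)$ abbreviate ``$\delta$ is \hjuecp'' and set $A = \set{\delta < \kappa \st V_\kappa \satisfies \Phi(\delta)}$. Since $j(V_\kappa) = M_{j(\kappa)}$, elementarity gives $j(A) = \set{\delta < j(\kappa) \st M_{j(\kappa)} \satisfies \Phi(\delta)}$, and the previous paragraph shows $\kappa \in j(A)$. Hence $A$ has measure one with respect to the normal measure $\set{X \subseteq \kappa \st \kappa \in j(X)}$ derived from $j$, so $A$ is stationary; in particular there are many $\delta < \kappa$ with $V_\kappa \satisfies \Phi(\delta)$, which is the desired conclusion.

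The main obstacle is the middle step: checking that the factor measures $U_\lambda$ genuinely land inside $M_{j(\kappa)}$ and are seen there to have the one fixed clearance $\theta$. This is where the almost-huge closure $M^{<j(\kappa)} \of M$ is essential --- it both makes the seeds $j\image\lambda$ available (so that lemma \ref{lemma.HJFactor} applies) and makes the resulting low-rank measures elements of $M$ --- and where the combinatorial characterization of lemma \ref{theorem.HighJumpMeasure} is needed to guarantee that ``\hj\ measure of clearance $\theta$'' is absolute between $V$ and $M_{j(\kappa)}$. Once this is in place, fixing $\theta$ uniformly across all $\lambda$ is automatic, since lemma \ref{lemma.HJFactor} returns the same clearance $\theta$ for every seed.
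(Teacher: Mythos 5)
Your proof is correct and takes essentially the same approach as the paper's: treat the almost-huge embedding as a \hj\ embedding with clearance $\theta < j(\kappa)$, factor it through the seeds $j \image \lambda$ for $\theta \leq \lambda < j(\kappa)$ via lemma \ref{lemma.HJFactor}, observe that the resulting \hj\ measures of clearance $\theta$ lie in $M_{j(\kappa)}$, and reflect below $\kappa$. The paper's version is just terser, leaving the bound $\theta < j(\kappa)$, the absoluteness of ``\hj\ measure with clearance $\theta$'' in $M_{j(\kappa)}$, and the final reflection step to the reader, all of which you verify explicitly.
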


\begin{proof}
Suppose $\kappa$ is almost huge, witnessed by an 
elementary 
embedding $j: V \to M$ with clearance 
$\theta$. In particular, the embedding $j$ is also a \hj\ embedding.
Let $\lambda$ be a cardinal such that  $\theta \leq \lambda < j(\kappa)$. The 
cardinal $j(\kappa)$ is a strong limit cardinal. Therefore, by 
lemma 
\ref{lemma.HJFactor}, the embedding $j$ has a $\lambda$-supercompactness factor 
embedding with clearance $\theta$ generated by a \hj\ measure on $P_\kappa 
\lambda$. This \hj\ measure is an element of $M_{j(\kappa)}$. 
\qed
\end{proof}

Consider a cardinal $\kappa$ such that for all sufficiently large cardinals 
$\lambda$, there is a \hj\ measure on $P_\kappa \lambda$. It may be possible 
that such a cardinal is not \hjuecp, because the \hj\ measures may not all 
generate embeddings with the same closure. However, the following proposition 
shows that these two types of cardinals are equiconsistent. 

\begin{proposition} \label{proposition.SSHJEquiconsistentWithHJplusORD}
The following two large cardinal axioms are equiconsistent over \ZFC.

\begin{enumerate}
\item \label{item.HJplusORD}
There exists a cardinal $\kappa$ such that for all sufficiently large cardinals 
$\lambda$, there is a \hj\ measure on $P_\kappa \lambda$.
\item\label{item.SSHJ}
There exists a cardinal that is \hjuecp\
\end{enumerate}

In particular if there are \hj\ measures on $P_\kappa \lambda$ for all 
sufficiently large cardinals $\lambda$, then either $\kappa$ is \hjuecp\ or 
else 
there is a cardinal $\theta$ such that $\kappa$ is \hjuecp\ in the model 
$V_\theta$.
\end{proposition}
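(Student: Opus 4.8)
The reverse implication, that (2) implies (1), is immediate from the definitions, and the equiconsistency follows once the final ``in particular'' assertion is established, since that assertion produces either a cardinal that is \hjuecp\ in $V$ or a transitive model $V_\theta\satisfies\ZFC$ containing such a cardinal. I therefore concentrate on that last sentence. For each cardinal $\lambda$ carrying a \hj\ measure, let $C_\lambda$ denote the set of clearances of \hj\ measures on $P_\kappa\lambda$; by hypothesis $C_\lambda\neq\emptyset$ for all $\lambda$ above some threshold $\mu_0$, and every element of $C_\lambda$ lies in $[\kappa^+,\lambda]$. The structural fact driving everything is that lemma \ref{lemma.HJFactor} makes the family $\set{C_\lambda}$ downward coherent: if $\theta'\in C_\lambda$ and $\theta'\leq\mu\leq\lambda$, then factoring by the seed $j\image\mu$ shows $\theta'\in C_\mu$. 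Hence $\set{\mu\st\theta'\in C_\mu}$ is an interval of cardinals beginning at $\theta'$; write $\gamma(\theta')$ for its supremum and call $\theta'$ \emph{full} if $\gamma(\theta')=\ORD$. Note also that $\theta\in C_\theta$ whenever $\theta$ is itself a clearance, obtained by factoring a clearance-$\theta$ embedding along $j\image\theta$, so $\gamma(\theta)\geq\theta$.

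The plan is to split on whether the clearances occurring among the $C_\lambda$ are bounded. If they are bounded by some $\theta^*$, then \hj\ measures exist on $P_\kappa\lambda$ for the proper class of $\lambda\geq\mu_0$ while only set-many clearance values lie below $\theta^*$, so a pigeonhole argument yields a single value $\theta'$ with $\theta'\in C_\lambda$ for a proper class of $\lambda$; by downward coherence $\theta'$ is full, which is exactly the statement that $\kappa$ is \hjuecp\ in $V$. This is the first alternative.

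So suppose the clearances are unbounded, equivalently that no clearance is full. I first record the reflection mechanism: if $\theta$ is a clearance and $C_\theta$ contains some clearance $\theta'<\theta$, then $\kappa$ is \hjuecp\ in $V_\theta$. Indeed $\theta'\in C_\theta$ forces $\gamma(\theta')\geq\theta$, so by coherence there is a \hj\ measure on $P_\kappa\mu$ of clearance $\theta'$ for every cardinal $\mu\in[\theta',\theta)$. Each such measure has rank below $\theta$ and so lies in $V_\theta$; moreover $\theta$ is the clearance of a \hj\ embedding, so by lemma \ref{lemma.VThetaElemMjkappa} we have $V_\theta\satisfies\ZFC$ and $\theta$ is a strong limit, making ``being a \hj\ measure on $P_\kappa\mu$ of clearance $\theta'$'' absolute between $V_\theta$ and $V$ through the combinatorial characterization of lemma \ref{theorem.HighJumpMeasure}. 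Thus $V_\theta$ sees $\theta'$ as a fixed clearance witnessing \hjuecp.

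It remains to locate a clearance $\theta$ with $\min C_\theta<\theta$, and this is where the main obstacle lies. Suppose toward a contradiction that $\min C_\theta=\theta$ for every clearance $\theta\geq\mu_0$. Fix such a $\theta$ and let $\theta^*$ be the least clearance above $\theta$. If $\gamma(\theta)\geq\theta^*$ then $\theta\in C_{\theta^*}$, contradicting $\min C_{\theta^*}=\theta^*$, so $\gamma(\theta)<\theta^*$. Since clearances are $\beth$ fixed points, hence limit cardinals (lemma \ref{lemma.ThetaStrongLimit}), the interval $(\gamma(\theta),\theta^*)$ contains a cardinal $\lambda\geq\mu_0$; in particular $\lambda>\gamma(\theta)\geq\theta$. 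Choosing any $\theta_\lambda\in C_\lambda$, a clearance with $\theta_\lambda\leq\lambda<\theta^*$ and hence $\theta_\lambda\leq\theta$, I get $\lambda\leq\gamma(\theta_\lambda)$. Now $\theta_\lambda=\theta$ is impossible, as it would give $\lambda\leq\gamma(\theta)<\lambda$; and $\theta_\lambda<\theta$ gives $\gamma(\theta_\lambda)\geq\lambda>\theta$, so $\theta\in[\theta_\lambda,\gamma(\theta_\lambda))$ and therefore $\theta_\lambda\in C_\theta$ with $\theta_\lambda<\theta$, again contradicting $\min C_\theta=\theta$. This contradiction produces the required $\theta$, completing the second alternative. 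The delicate point is precisely engineering this contradiction: one must combine the factor-embedding coherence of the $C_\lambda$ with the fact that clearances are limit cardinals, so that the hypothesis providing \hj\ measures on \emph{all} large $P_\kappa\lambda$ can be exploited in the gap $(\gamma(\theta),\theta^*)$ below a clearance.
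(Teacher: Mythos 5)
You take a genuinely different route from the paper, organizing everything around the sets $C_\lambda$ of clearances of \hj\ measures on $P_\kappa\lambda$, their downward coherence under lemma \ref{lemma.HJFactor}, and a bounded/unbounded dichotomy; the pigeonhole case, the reflection mechanism, and the final $\lambda$-argument are all essentially sound. But there is a genuine gap at the step ``if $\gamma(\theta)\geq\theta^*$ then $\theta\in C_{\theta^*}$.'' You defined $\gamma(\theta)$ as the \emph{supremum} of the interval $I_\theta=\set{\mu \st \theta\in C_\mu}$, and that supremum need not be attained: since $\theta^*$ is a limit cardinal, the configuration in which $\theta\in C_\mu$ for every cardinal $\mu\in[\theta,\theta^*)$ but $\theta\notin C_{\theta^*}$ gives exactly $\gamma(\theta)=\theta^*$ with $\theta\notin C_{\theta^*}$. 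Nothing you prove excludes this configuration; in particular it is compatible with your contradiction hypothesis that $\min C_{\theta'}=\theta'$ for every clearance $\theta'$, because the cardinals in $(\theta,\theta^*)$ are not clearances, so the hypothesis says nothing about their $C_\mu$'s. Excluding it outright would amount to solving precisely the uniformity problem this proposition is about: from \hj\ measures of clearance $\theta$ on every $P_\kappa\mu$ with $\mu<\theta^*$ one cannot conclude that there is one on $P_\kappa\theta^*$. In this configuration the interval $(\gamma(\theta),\theta^*)$ is empty, no $\lambda$ can be chosen, and your contradiction never materializes, so the proof by contradiction as written is incomplete.

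Fortunately the gap is repaired by your own reflection mechanism, whose proof never uses the hypothesis $\theta'\in C_\theta$ itself but only the existence of \hj\ measures on $P_\kappa\mu$ for cardinals $\mu$ strictly below the reflecting clearance. In the unattained-supremum configuration those measures exist with clearance $\theta$ below the clearance $\theta^*$, so $\kappa$ is \hjuecp\ in $V_{\theta^*}$ --- which is the second alternative of the proposition, not a contradiction. With this case added the argument closes, and the contradiction framing can be dropped entirely: either $\gamma(\theta)\geq\theta^*$, giving the conclusion in $V_{\theta^*}$, or $\gamma(\theta)<\theta^*$, and your $\lambda$-argument yields a clearance $\theta_\lambda<\theta$ with $\theta_\lambda\in C_\theta$, giving the conclusion in $V_\theta$. (Incidentally, ``clearances unbounded'' is not equivalent to ``no clearance is full,'' but nothing in your argument actually uses that equivalence.) For comparison, the paper reaches the result more directly: it takes $\theta_0$ minimal so that \hj\ measures exist on all $P_\kappa\lambda$ with $\lambda\geq\theta_0$, uses factoring to show no clearance can lie below $\theta_0$, and then reflects inside $V_{\theta_1}$, where $\theta_1$ is the least clearance above $\theta_0$. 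Finally, your standing assertion $C_\lambda\subseteq[\kappa^+,\lambda]$, on which the choice of $\theta_\lambda$ relies, deserves justification; it holds because clearances of \hj\ embeddings are strong limits in $V$ by lemma \ref{lemma.ThetaStrongLimit}, while an ultrapower by a normal fine measure on $P_\kappa\lambda$ is not $(2^{\lambda^{<\kappa}})^+$-closed.
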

\begin{proof}

It is immediate from the definitions that if $\kappa$ is \hjuecp, then for all 
sufficiently large $\lambda$, there is a \hj\ measure on $P_\kappa \lambda$. 

For the converse, 
suppose that for all sufficiently large $\lambda$, there is a \hj\ measure on 
$P_\kappa \lambda$, but the cardinal $\kappa$ is not \hjuecp.
Let $\theta_0$ be the minimal cardinal such that for all cardinals $\lambda 
\geq 
\theta_0$, 
there is a \hj\ measure on $P_\kappa \lambda$. Since the cardinal $\kappa$ is 
not \hjuecp, these \hj\ measures do not all 
generate embeddings with clearance $\theta_0$. 

None of these measures generates a \hj\ embedding with clearance less than 
$\theta_0$. If it did, then the minimality of $\theta_0$ would be contradicted 
by taking factor embeddings and applying lemma \ref{lemma.HJFactor}.

Accordingly, let $\theta_1$ be the least 
cardinal above 
$\theta_0$ such that there is a \hj\ embedding for $\kappa$ with clearance 
$\theta_1$.  Let $j: V \to M$ be a \hj\ embedding for $\kappa$ with clearance 
$\theta_1$. Then the model $V_{\theta_1}$ satisfies \ZFC\ by lemma 
\ref{lemma.VThetaElemMjkappa}, and in this model, the cardinal $\kappa$ 
is \hjuecp\ with respect to the clearance 
$\theta_0$.
\qed
\end{proof}

Next, proposition \ref{proposition.ClosureHierarchy} shows that the degrees of 
excess 
closure of \hj\ embeddings form a hierarchy of consistency strength. In this 
hierarchy, there are many more cardinals above the ones described in proposition 
\ref{proposition.ClosureHierarchy} and below the \hj\ cardinals with unbounded 
excess closure. For further details, see \cite[pp. 
117-118]{Perlmutter2013:Dissertation}.  

\begin{proposition} \label{proposition.ClosureHierarchy}
Suppose that for some cardinals $\kappa$ and $\theta$ and for some ordinal 
$\alpha< \theta$, there exists a \hj\ 
embedding $j: V \to M$ with critical point $\kappa$ and clearance $\theta$ such 
that the model $M$ is closed under sequences of length 
$2^{\aleph_{\theta+\alpha}^{<\kappa}}.$ Then in the model $M_{j(\kappa)}$, 
there 
are unboundedly many cardinals $\lambda$ such that there is 
a \hj\ measure on $P_\kappa (\aleph_{\lambda + \alpha})$ generating a \hj\ 
embedding with critical point $\kappa$ and clearance $\lambda$.
\end{proposition}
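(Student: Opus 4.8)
The plan is to follow the template of proposition \ref{proposition.HJplus}: use lemma \ref{lemma.VThetaElemMjkappa} to obtain the elementarity $V_\theta \elem M_{j(\kappa)}$, verify that the clearance $\theta$ itself witnesses the desired property inside $M_{j(\kappa)}$, and then reflect that single witness downward to obtain unboundedly many witnesses $\lambda < j(\kappa)$. First I would record the relevant arithmetic. Since the \hj\ embedding $j$ is in particular an \ahj\ embedding, lemmas \ref{lemma.CHnotAchieved} and \ref{lemma.ThetaStrongLimit} show that its clearance $\theta$ is a strong limit and an $\aleph$-fixed point with $\aleph_\theta = \theta$. Because $j(\kappa)$ is inaccessible in $M$ and $\theta, \alpha < j(\kappa)$, it follows that $\theta + \alpha < j(\kappa)$ and hence $\aleph_{\theta+\alpha} < \aleph_{j(\kappa)} = j(\kappa)$. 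Lemma \ref{lemma.VThetaElemMjkappa} then supplies $V_\theta \satisfies \ZFC$ and $V_\theta \elem M_{j(\kappa)}$, which will drive the reflection.

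Next I would construct the witness at $\lambda = \theta$. Setting $\lambda' = \aleph_{\theta+\alpha} \geq \theta$, the closure hypothesis $M^{2^{\aleph_{\theta+\alpha}^{<\kappa}}} \of M$ guarantees in particular that $j \image \aleph_{\theta+\alpha} \in M$, so lemma \ref{lemma.HJFactor} applies and produces the factor embedding generated by the normal fine measure $U$ on $P_\kappa(\aleph_{\theta+\alpha})$ induced by the seed $j \image \aleph_{\theta+\alpha}$; this $U$ is a \hj\ measure whose clearance is again $\theta$. Since $U \of \mathcal{P}(P_\kappa \aleph_{\theta+\alpha})$ and $\lvert \mathcal{P}(P_\kappa \aleph_{\theta+\alpha})\rvert = 2^{\aleph_{\theta+\alpha}^{<\kappa}}$, the measure $U$ can be coded by a sequence of that length, and the closure hypothesis yields $U \in M$. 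Because $\aleph_{\theta+\alpha} < j(\kappa)$ and $j(\kappa)$ is inaccessible in $M$, the rank of $U$ lies below $j(\kappa)$, so in fact $U \in M_{j(\kappa)}$. This shows that $\theta$ belongs, inside $M_{j(\kappa)}$, to the set $S$ of cardinals $\lambda$ admitting a \hj\ measure on $P_\kappa(\aleph_{\lambda+\alpha})$ that generates an embedding with critical point $\kappa$ and clearance $\lambda$.

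With $\theta \in S^{M_{j(\kappa)}}$ in hand, I would finish by the reflection argument of proposition \ref{proposition.HJplus}. The set $S$ is definable from the parameters $\kappa$ and $\alpha$, both of which lie in $V_\theta$. If $S$ were bounded in $M_{j(\kappa)}$, then since $V_\theta \satisfies \ZFC$ and $V_\theta \elem M_{j(\kappa)}$, the least ordinal bounding $S$ would be computed identically in $V_\theta$ and in $M_{j(\kappa)}$, hence would be some $\gamma_0 < \theta$; but $\theta \in S^{M_{j(\kappa)}}$ contradicts $\theta < \gamma_0$. Therefore $S$ is unbounded in $j(\kappa) = \ORD^{M_{j(\kappa)}}$, which is exactly the claimed conclusion.

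The step requiring the most care, and the main obstacle, is verifying that $M_{j(\kappa)}$ genuinely recognizes $U$ as a \hj\ measure of clearance exactly $\theta$, rather than merely containing $U$ as a set. This amounts to checking that the clearance $\sup\set{j_0(f)(\kappa) \st f: \kappa \to \kappa}$ of the ultrapower by $U$ is computed the same way in $M_{j(\kappa)}$ as in $V$. Since $M^\theta \of M$, the models $V$ and $M_{j(\kappa)}$ agree on $P(\kappa)$, hence on the functions $f : \kappa \to \kappa$; and because $M$ is closed under sequences of length $2^{\aleph_{\theta+\alpha}^{<\kappa}}$, it contains all functions $P_\kappa \aleph_{\theta+\alpha} \to \theta$ needed to compute, by the \Los\ Theorem, the ordinals $j_0(f)(\kappa)$ below $\theta$. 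Thus the clearance, being an ordinal below $j(\kappa)$, is absolute between $V$ and $M_{j(\kappa)}$, so $M_{j(\kappa)} \satisfies$ ``$U$ is a \hj\ measure on $P_\kappa(\aleph_{\theta+\alpha})$ with clearance $\theta$.'' The offset $\alpha$ plays no special role beyond being a fixed parameter below $\theta$: it is carried verbatim through the reflection, which is precisely why the reflected witnesses are measures on $P_\kappa(\aleph_{\lambda+\alpha})$ rather than on $P_\kappa(\aleph_\lambda)$.
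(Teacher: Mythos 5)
Your proof is correct and takes essentially the same approach as the paper's: exhibit $\theta$ as a member, inside $M_{j(\kappa)}$, of the set of cardinals $\lambda$ admitting such a measure on $P_\kappa(\aleph_{\lambda+\alpha})$ (using lemma \ref{lemma.HJFactor} together with the closure hypothesis to get the measure $U$ into $M_{j(\kappa)}$), and then invoke $V_\theta \elem M_{j(\kappa)}$ from lemma \ref{lemma.VThetaElemMjkappa} to conclude the set is unbounded. The paper's proof is just a terse version of this same outline; your write-up supplies the coding and absoluteness details that it compresses into the phrase ``sufficiently closed.''
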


\begin{proof}
In the model $M_{j(\kappa)}$, consider the set of cardinals $\lambda$ such that 
there is 
a \hj\ measure on $P_\kappa (\aleph_{\lambda + \alpha})$ generating a \hj\ 
embedding with critical point $\kappa$ and clearance $\lambda$. The model 
$M_{j(\kappa)}$ is sufficiently closed to see that $\theta$ is an element of 
this 
set. By lemma \ref{lemma.VThetaElemMjkappa}, the elementarity relation 
$V_\theta \elem M_{j(\kappa)}$ holds, so it follows that this set is unbounded 
in $M_{j(\kappa)}$.
\qed
\end{proof}

Next, I move on to prove some results lower down in the hierarchy of \hj\ 
cardinals 
and related cardinals.

\begin{proposition}\label{proposition.HJorder} \label{proposition.HJplusOrder}
Let $\eta$ and $\eta'$ be ordinals such that $\eta < \eta'$. Suppose 
the cardinal $\kappa$ has \hj\ order $\eta'$. Then there is an 
elementary embedding $j: V \to M$ with critical point $\kappa$ such that the 
cardinal $\kappa$ has \hj\ order $\eta$ in $M_{j(\kappa)}$. 
\end{proposition}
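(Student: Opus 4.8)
The plan is to single out one \hj\ embedding whose clearance is big enough that \hjot{\eta} reflects into its rank-initial segment, and then to push this statement up to $M_{j(\kappa)}$ by elementarity. Since $\kappa$ has \hjot{\eta'}, fix a strictly increasing sequence $\< \theta_\alpha \st \alpha < \eta' >$ of ordinals and, for each $\alpha < \eta'$, a \hj\ embedding $j_\alpha : V \to M_\alpha$ for $\kappa$ with clearance $\theta_\alpha$. Because $\eta < \eta'$, the ordinal $\theta_\eta$ and the embedding $j := j_\eta : V \to M$ are defined. As $M^{\theta_\eta} \of M$, the embedding $j$ is in particular an \ahj\ embedding, so lemma \ref{lemma.VThetaElemMjkappa} gives $V_{\theta_\eta} \satisfies \ZFC$ and $V_{\theta_\eta} \elem M_{j(\kappa)}$, while lemma \ref{lemma.ClearanceFactsAHJEmbedding} shows that $\theta_\eta$ is a strong limit $\beth$ fixed point. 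Since the $\theta_\alpha$ strictly increase we have $\theta_\eta \geq \theta_0 + \eta > \eta$, so $\eta < \theta_\eta$, and hence $\eta$ is available as a parameter both in $V_{\theta_\eta}$ and, via the elementary substructure relation, in $M_{j(\kappa)}$.

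I would then prove that $\kappa$ has \hjot{\eta} inside $V_{\theta_\eta}$. For each $\alpha < \eta$, apply lemma \ref{lemma.HJFactor} to $j_\alpha$ with the seed $j_\alpha \image \theta_\alpha$ to obtain a \hj\ measure $U_\alpha$ on $P_\kappa \theta_\alpha$ whose generated ultrapower embedding $h_\alpha$ again has clearance exactly $\theta_\alpha$. As $\theta_\alpha < \theta_\eta$ and $\theta_\eta$ is a strong limit, the set $P(P_\kappa\theta_\alpha)$ has rank below $\theta_\eta$, so $U_\alpha \in V_{\theta_\eta}$; the witnessing sequence $\< \theta_\alpha \st \alpha < \eta >$ lies in $V_{\theta_\eta}$ for the same reason.

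The crux is to check that each $U_\alpha$ remains a \hj\ measure of clearance exactly $\theta_\alpha$ as computed inside $V_{\theta_\eta}$. Here I would invoke the combinatorial characterization of lemma \ref{theorem.HighJumpMeasure}: that $U_\alpha$ is a normal fine measure on $P_\kappa\theta_\alpha$ satisfying $\set{A \in P_\kappa\theta_\alpha \st f(\ot(A \cap \kappa)) < \ot(A)} \in U_\alpha$ for every $f : \kappa \to \kappa$ is a statement quantifying only over subsets of $P_\kappa\theta_\alpha$ and over functions $f : \kappa \to \kappa$, all of which belong to $V_{\theta_\eta}$; hence it is absolute between $V$ and $V_{\theta_\eta}$. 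Likewise the ordinals $h_\alpha(f)(\kappa) = [A \mapsto f(\ot(A \cap \kappa))]_{U_\alpha}$ that determine the clearance are read off from functions $P_\kappa\theta_\alpha \to \kappa$, which also all lie in $V_{\theta_\eta}$, so the part of the ultrapower below $h_\alpha(\kappa)$, and therefore the clearance $\sup\set{h_\alpha(f)(\kappa) \st f : \kappa \to \kappa} = \theta_\alpha$, is computed identically in $V$ and in $V_{\theta_\eta}$. Thus inside $V_{\theta_\eta} \satisfies \ZFC$ the sequence $\< \theta_\alpha \st \alpha < \eta >$ together with the embeddings $h_\alpha$ generated by the $U_\alpha$ witnesses that $\kappa$ has \hjot{\eta}.

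Finally, since $V_{\theta_\eta} \elem M_{j(\kappa)}$ and the assertion that $\kappa$ has \hjot{\eta} is first-order with the parameters $\kappa$ and $\eta$ lying in $V_{\theta_\eta}$, it transfers upward, giving that $\kappa$ has \hjot{\eta} in $M_{j(\kappa)}$, as desired. The main obstacle is precisely this middle verification: ensuring that descent to the set model $V_{\theta_\eta}$ preserves both the high-jump property and the exact value of the clearance of each witnessing measure. This is where the strong-limitness of $\theta_\eta$ (to place $U_\alpha$ in $V_{\theta_\eta}$) and the fact that $V_{\theta_\eta}$ contains all the relevant functions (to keep each clearance equal to $\theta_\alpha$) are both essential.
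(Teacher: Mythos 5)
Your route is genuinely different from the paper's: the paper takes a \hj\ embedding $j$ whose clearance is $\theta_\alpha$ for some index $\alpha \geq \eta$ and observes that the closure $M^{\theta_\alpha} \of M$ already places the witnessing \hj\ measures and the sequence of clearances inside $M$, hence (their ranks being below $j(\kappa)$) inside $M_{j(\kappa)}$; you instead descend to $V_{\theta_\eta}$, verify \hjot{\eta} there, and transfer upward via $V_{\theta_\eta} \elem M_{j(\kappa)}$ from lemma \ref{lemma.VThetaElemMjkappa}. Your absoluteness analysis of the measures is correct and is the substantive content either way: being a \hj\ measure (lemma \ref{theorem.HighJumpMeasure}) and the exact value of the clearance are computed identically in $V$ and in $V_{\theta_\eta}$, because every function $f: \kappa \to \kappa$ and every function $P_\kappa\theta_\alpha \to \kappa$ lies in $V_{\theta_\eta}$.

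There is, however, one step that can fail as written: the claim that the witnessing sequence $\langle \theta_\alpha : \alpha < \eta \rangle$ lies in $V_{\theta_\eta}$ ``for the same reason'' as the measures. Rank considerations bound each entry $\theta_\alpha$ below $\theta_\eta$, but the sequence itself has rank essentially $\sup_{\alpha<\eta}\theta_\alpha$, and nothing in the hypothesis prevents this supremum from equalling $\theta_\eta$ when $\eta$ is a limit ordinal: the given clearance sequence may be continuous at $\eta$. (By lemma \ref{lemma.clearanceBasicsGeneralEmbedding} this requires $\cof(\eta) \geq \kappa^+$, but that case is not excluded; nor can you always dodge it by using the embedding with clearance $\theta_{\eta+1}$, since possibly $\eta' = \eta + 1$.) In that case the sequence is not an element of $V_{\theta_\eta}$, and your verification that $V_{\theta_\eta}$ satisfies ``$\kappa$ has \hjot{\eta}'' is incomplete. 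The gap is repairable within your framework: $V_{\theta_\eta} \satisfies \ZFC$, and the class of clearances of embeddings generated by \hj\ measures for $\kappa$ is definable there and, by your absoluteness argument, contains every $\theta_\alpha$ with $\alpha < \eta$, hence has order type at least $\eta$; applying Replacement inside $V_{\theta_\eta}$ to the increasing enumeration of this class, restricted to $\eta$, produces a witnessing sequence that genuinely is a set of $V_{\theta_\eta}$. Alternatively, argue as the paper does, directly in $M_{j(\kappa)}$: since $M^{\theta_\eta} \of M$ and $\eta < \theta_\eta$, the sequence $\langle \theta_\alpha : \alpha < \eta\rangle$ is an element of $M$ of rank at most $\theta_\eta + \omega < j(\kappa)$, so it lies in $M_{j(\kappa)}$ outright; the ordinal height $j(\kappa)$ of that model provides exactly the headroom that the rank-initial segment $V_{\theta_\eta}$ lacks.
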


\begin{proof}
The cardinal $\kappa$ has \hj\ order $\eta'$, and this is witnessed 
by a sequence of clearances $\langle \theta_\alpha \st {\alpha<\eta'\rangle}$.
Let $j: V \to M$ be a \hj\ embedding for $\kappa$ with clearance $\theta$ for 
some $\theta_\alpha$ such that $\alpha \geq \eta$. Then the model $M$ is 
sufficiently closed so that in $M_{j(\kappa)}$, the cardinal $\kappa$ has 
\hj\ order $\eta$.
\qed
\end{proof}


I now move further down the large cardinal hierarchy, to the \ahj\ cardinals. 
Recall from the introduction that \ahj\ cardinals are characterized by 
combinatorially by coherent sequences of normal measures, which are described in 
detail in \cite[lemma 57]{Perlmutter2013:Dissertation}.

\begin{proposition}\label{proposition.HJ>SAHJ}
Suppose there is a \hj\ embedding with critical point $\kappa$ and clearance 
$\theta$. Then $\kappa$ has \ahj\ order $\theta$, and in the models $V_\theta$, 
$M_{j(\kappa)}$, and $V_\kappa$, there are 
many \sahj\ cardinals.
\end{proposition}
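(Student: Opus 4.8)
The plan is to follow the template of the proof of proposition \ref{proposition.HJplus}, but to place into $M$ a witness for $\kappa$ being \ahjp\ with clearance $\theta$ rather than a single \hj\ measure on $P_\kappa\theta$. This substitution is exactly where the weaker closure hypothesis $M^\theta\of M$ (instead of $M^{2^\theta}\of M$) forces the weaker, ``almost'' conclusion: an \ahj\ embedding with clearance $\theta$ is captured combinatorially by a coherent sequence of normal fine measures $\<U_\lambda\st\lambda<\theta>$ (see \cite[lemma 57]{Perlmutter2013:Dissertation}), an object of rank only slightly above $\theta$, whereas a single \hj\ measure on $P_\kappa\theta$ is a subset of $P(P_\kappa\theta)$ and so needs roughly $2^\theta$-closure to be captured. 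Throughout I write $j: V\to M$ for the given \hj\ embedding; it is in particular an \ahj\ embedding, so lemma \ref{lemma.VThetaElemMjkappa} gives $V_\theta\satisfies\ZFC$ and $V_\theta\elem M_{j(\kappa)}$, and $M^\theta\of M$ gives $V_\theta=M_\theta$.

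First I would show that $\theta$ is the clearance of an \ahj\ embedding for $\kappa$ \emph{as computed in} $M_{j(\kappa)}$. For each cardinal $\lambda<\theta$, let $U_\lambda$ be the normal fine measure on $P_\kappa\lambda$ induced via $j$ by the seed $j\image\lambda$. Since $\theta$ is a strong limit (lemma \ref{lemma.ClearanceFactsAHJEmbedding}), each $U_\lambda$ is a subset of $M_\theta$ of size less than $\theta$, hence lies in $M$ by the closure of $M$; and the $\theta$-sequence $\<U_\lambda\st\lambda<\theta>$ then lies in $M$ as well, again because $M^\theta\of M$. This sequence has rank below $j(\kappa)$, so it belongs to $M_{j(\kappa)}$, and by the combinatorial characterization of \ahj\ embeddings it witnesses inside $M_{j(\kappa)}$ that $\kappa$ carries an \ahj\ embedding of clearance exactly $\theta$.

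Next I would run the boundedness argument of proposition \ref{proposition.HJplus}. Let $S$ be the set of cardinals $\lambda$ such that $M_{j(\kappa)}$ believes $\kappa$ carries an \ahj\ embedding with clearance $\lambda$. By the elementarity $V_\theta\elem M_{j(\kappa)}$, if $S$ were bounded in $M_{j(\kappa)}$ then its least bound, being definable from $\kappa$, would lie below $\theta$; but $\theta\in S$ by the previous step, a contradiction. Hence $S$ is unbounded in $M_{j(\kappa)}$, i.e.\ $\kappa$ is \sahjp\ in $M_{j(\kappa)}$. Since $\kappa=\crit(j)$, a standard reflection (for any club $C\subseteq\kappa$ the club $j(C)$ contains $\kappa$, and $\kappa$ is \sahjp\ in $V_{j(\kappa)}^M=M_{j(\kappa)}$) shows that there are many $\delta<\kappa$ that are \sahjp\ in $V_\kappa$; applying $j$ and then using $V_\theta\elem M_{j(\kappa)}$ transfers this to many \sahj\ cardinals in $M_{j(\kappa)}$ and in $V_\theta$, exactly as in proposition \ref{proposition.HJplus}.

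Finally, for \ahjot{\theta} in $V$: the unboundedness of $S$ together with $V_\theta\elem M_{j(\kappa)}$ shows that, in $V_\theta$, the clearances of \ahj\ embeddings for $\kappa$ are unbounded in $\theta$; and each such embedding, being witnessed by a coherent sequence of measures lying in $V_\theta$, remains an \ahj\ embedding in $V$, since the relevant measures are subsets of $V_\theta$ about which $V_\theta$ and $V$ agree. Arranging these clearances into a strictly increasing $\theta$-sequence, exactly as proposition \ref{proposition.HJplus} arranges its \hj\ clearances, yields \ahjot{\theta} for $\kappa$ in $V$. The main obstacle, and the real content of the argument, is the first step: verifying that $\theta$-closure of $M$ already suffices to capture the \ahj\ witness even though it does not suffice to capture a \hj\ measure on $P_\kappa\theta$. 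The reflection and order-type bookkeeping of the remaining steps are routine adaptations of proposition \ref{proposition.HJplus}.
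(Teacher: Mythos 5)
Your proposal is correct and takes essentially the same approach as the paper: the paper's proof likewise observes that $j$ is itself an \ahj\ embedding whose combinatorial witness (the coherent sequence of normal fine measures) lies in $H_{\theta^+}$ and hence in $M$ by $\theta$-closure, then runs the identical unboundedness argument via $M_\theta \elem M_{j(\kappa)}$ to get \sahjp\ in $M_{j(\kappa)}$, with the remaining conclusions following by reflection and absoluteness just as you describe. The only cosmetic difference is that you build the coherent sequence explicitly as the factor measures derived from $j$, whereas the paper cites its existence from the characterization lemma and simply notes it is an element of $H_{\theta^+}$.
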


\begin{proof}
Suppose  $j: V \to M$ is a \hj\ embedding with critical point $\kappa$ and 
clearance $\theta$.  It follows immediately from definitions that the embedding 
$j$ also witnesses that $\kappa$ is almost high jump.  By
corollary 
\ref{lemma.ThetaStrongLimit}, the cardinal 
$\theta$ is a 
strong limit,
and so it follows that the coherent sequence of measures witnessing that there 
is an \ahj\ embedding for $\kappa$ with clearance 
$\theta$  is an 
element of
$H_{\theta^+}$. This coherent sequence of measures is also an element of $M$, 
by the
closure of $M$. Therefore, the cardinal $\kappa$ is almost high jump 
in $M$ with clearance $\theta$. 
Consider the set $\set{\delta \st M_{j(\kappa)} \satisfies 
\kappa \text{  
almost high jump with clearance }\delta}.$  By theorem 
\ref{Theorem.M_thetaElemInM_j(kappa)}, if this set 
has a bound in 
$M_{j(\kappa)}$, then the bound must be less than 
$\theta$. It 
follows 
that the set is unbounded in $M_{j(\kappa)}$, and so $\kappa$ is super almost 
high jump 
in the model
$M_{j(\kappa)}$. The other conclusions are immediate.
\qed
\end{proof}

\begin{proposition}\label{corollary.AHJorder}
Let $\eta < \eta'$ be ordinals. Suppose 
the cardinal $\kappa$ has \ahj\ order $\eta'$. Then there is an 
elementary embedding $j: V \to M$ with critical point $\kappa$ such that the 
cardinal $\kappa$ is has \ahj\ order $\eta$ in $M_{j(\kappa)}$. 
\end{proposition}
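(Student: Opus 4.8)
The plan is to imitate the proof of proposition \ref{proposition.HJplusOrder}, replacing \hj\ embeddings by \ahj\ embeddings and exploiting the combinatorial characterization of \ahj\ clearance by coherent sequences of normal fine measures together with the reflection supplied by lemma \ref{lemma.VThetaElemMjkappa}. Let $\langle \theta_\alpha \st \alpha < \eta' \rangle$ be a strictly increasing sequence of cardinals witnessing that $\kappa$ has \ahj\ order $\eta'$. Since $\eta < \eta'$, the cardinal $\theta = \theta_\eta$ is defined; let $j : V \to M$ be an \ahj\ embedding for $\kappa$ with clearance $\theta$. The goal is to show that the initial segment $\langle \theta_\alpha \st \alpha < \eta \rangle$, together with a witness for each corresponding lower clearance, witnesses that $\kappa$ has \ahj\ order $\eta$ inside $M_{j(\kappa)}$.

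First I would record the ambient facts. By lemma \ref{lemma.ThetaStrongLimit} the clearance $\theta$ is a singular $\beth$ fixed point (in particular a strong limit), and by lemma \ref{lemma.VThetaElemMjkappa} we have $V_\theta \satisfies \ZFC$ and $V_\theta \elem M_{j(\kappa)}$. As in the proof of proposition \ref{proposition.HJ>SAHJ}, for each $\alpha < \eta$ an \ahj\ embedding for $\kappa$ of clearance $\theta_\alpha$ is coded by a coherent sequence of normal fine measures $\vec U_\alpha$. Since each $\theta_\alpha$ is itself a strong limit (lemma \ref{lemma.ThetaStrongLimit}) and the constituent measures of $\vec U_\alpha$ live on structures $P_\kappa \lambda$ with $\lambda < \theta_\alpha < \theta$, the object $\vec U_\alpha$ has rank below the strong limit $\theta$ and hence belongs to $V_\theta$ (concretely, $\vec U_\alpha \in H_{\theta_\alpha^+} \of V_\theta$). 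Because $V_\theta \elem M_{j(\kappa)}$ forces $V_\theta \of M_{j(\kappa)}$, each $\vec U_\alpha$ is an element of $M_{j(\kappa)}$.

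Next I would transfer the recognition of these witnesses into the target. Being a normal, fine, coherent sequence of measures satisfying the high-jump condition of lemma \ref{theorem.HighJumpMeasure} and yielding clearance $\theta_\alpha$ is a first-order property, and $V_\theta$ — a model of \ZFC\ containing all of $P_\kappa \lambda$ and $P(P_\kappa \lambda)$ for $\lambda < \theta_\alpha$ — verifies it of $\vec U_\alpha$; elementarity then hands this verification to $M_{j(\kappa)}$. It remains to place the index sequence into $M_{j(\kappa)}$. Since the $\theta_\alpha$ are strictly increasing cardinals, $\theta_\alpha \geq \alpha$, so $\eta < \theta_\eta = \theta$; thus $\langle \theta_\alpha \st \alpha < \eta \rangle$ has length below $\theta$ and, being a sequence of ordinals in $M$, lies in $M$ by the closure $M^{<\theta} \of M$. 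As $\theta < j(\kappa)$ (using that $\cof(\theta) \leq 2^\kappa < j(\kappa)$ by lemma \ref{lemma.CHnotAchieved} and that $j(\kappa)$ is inaccessible in $M$), this sequence has rank below $j(\kappa)$ and so belongs to $M_{j(\kappa)}$. Consequently $M_{j(\kappa)}$ possesses a strictly increasing sequence of cardinals of length $\eta$ together with an \ahj\ witness for each term, which is exactly what it means for $\kappa$ to have \ahj\ order $\eta$ there.

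The main obstacle, and the only place requiring genuine care, is guaranteeing that the witnesses for the smaller clearances both land inside $M_{j(\kappa)}$ and are recognized there as \ahj\ witnesses; this is precisely the role of the combinatorial characterization, which presents the witnesses as sets of small hereditary size rather than as proper-class embeddings, and of the reflection $V_\theta \elem M_{j(\kappa)}$ from lemma \ref{lemma.VThetaElemMjkappa}. The supporting strong-limit bookkeeping — that each $\vec U_\alpha$ has rank below $\theta$, that $\eta < \theta$, and that $\theta < j(\kappa)$ — is routine but should be checked explicitly.
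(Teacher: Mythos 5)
Your proof follows the same route as the paper's: the paper disposes of this proposition by saying that the argument for proposition \ref{proposition.HJplusOrder} goes through with \ahj\ embeddings in place of \hj\ embeddings and coherent sequences of measures in place of \hj\ measures, and your write-up fills in exactly those details, invoking lemmas \ref{lemma.ThetaStrongLimit} and \ref{lemma.VThetaElemMjkappa} correctly and correctly placing each witness $\vec U_\alpha$ in $H_{\theta_\alpha^+} \of V_\theta \of M_{j(\kappa)}$, where it is recognized as a witness by elementarity.

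However, one step is a genuine non sequitur: from the strict increase of the $\theta_\alpha$ you conclude $\theta_\alpha \geq \alpha$, and then assert the \emph{strict} inequality $\eta < \theta_\eta$. Strict monotonicity gives only $\theta_\eta \geq \eta$; equality $\theta_\eta = \eta$ is not ruled out by the hypotheses (it would require $\eta$ to itself be a clearance with $\eta$-many clearances below it, and nothing excludes such fixed points of the enumeration of clearances). This matters here, and only here, because you are working with \ahj\ closure $M^{<\theta} \of M$ rather than \hj\ closure $M^{\theta} \of M$: if $\eta = \theta$, your index sequence has length exactly $\theta$, and the closure argument placing it in $M$ fails. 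This is precisely the point at which the proof of proposition \ref{proposition.HJplusOrder} does not transfer verbatim to the \ahj\ setting. The repair is easy: the set of all cardinals below $\theta$ that $M_{j(\kappa)}$ believes to be clearances of \ahj\ embeddings for $\kappa$ is an element of $M_{j(\kappa)}$ by separation, and it contains $\set{\theta_\alpha \st \alpha < \eta}$, hence has order type at least $\eta$; since order type is absolute, $M_{j(\kappa)}$ can enumerate the first $\eta$ elements of this set and thereby verify that $\kappa$ has \ahj\ order $\eta$, with no need to import your particular sequence into $M$. Alternatively, when $\eta + 1 < \eta'$ you may instead take the embedding with clearance $\theta_{\eta+1} > \theta_\eta \geq \eta$, and then your original closure argument works unchanged; only the case $\eta' = \eta + 1$ forces the first patch.
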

\begin{proof}
The proof follows the same reasoning as the proof of proposition 
\ref{proposition.HJorder}, replacing \hj\ embeddings with \ahj\ embeddings and 
\hj\ measures with coherent sequences of measures.
\qed
\end{proof}

Finally, I reach the Shelah-for-supercompactness cardinals.

\begin{proposition} \label{proposition.AHJ>SSC}
Suppose the cardinal $\kappa$ is \ahj. Then there are many cardinals below 
$\kappa$ that are Shelah for supercompactness.
\end{proposition}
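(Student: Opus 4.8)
The plan is to prove that $\kappa$ is Shelah for supercompactness \emph{as computed inside} the target model of an \ahj\ embedding for $\kappa$, and then to reflect this fact below $\kappa$. So first I would fix an \ahj\ embedding $j : V \to M$ for $\kappa$ with clearance $\theta$, so that $M^{<\theta} \of M$. By lemma \ref{lemma.ThetaStrongLimit} the clearance $\theta$ is a strong limit cardinal; consequently, as in the proof of lemma \ref{lemma.VThetaElemMjkappa}, the embedding $j$ is also $\theta$-strong, so that $V_\theta = M_\theta$ and every set of rank below $\theta$ belongs to $M$.

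The heart of the argument is to produce, for each function $f : \kappa \to \kappa$, a normal fine measure lying in $M$ that witnesses the Shelah condition for $f$. Given such an $f$, the ordinal $j(f)(\kappa)$ is below $\theta$, so I would choose a cardinal $\lambda$ with $\max(\kappa, j(f)(\kappa)) < \lambda < \theta$; this is possible since $\theta$ is a limit cardinal. Then $j \image \lambda \in M$, as it is essentially a sequence of length $\lambda < \theta$, and I let $U$ be the normal fine measure on $P_\kappa \lambda$ induced via $j$ by the seed $j \image \lambda$. A direct seed computation shows that $\set{A \in P_\kappa \lambda \st f(\ot(A \cap \kappa)) < \ot(A)} \in U$: by the definition of $U$ this set lies in $U$ exactly when $j(f)(\ot(j \image \lambda \cap j(\kappa))) < \ot(j \image \lambda)$ holds in $M$, and since $\ot(j \image \lambda \cap j(\kappa)) = \ot(j \image \kappa) = \kappa$ and $\ot(j \image \lambda) = \lambda$, this reduces to $j(f)(\kappa) < \lambda$, which holds by the choice of $\lambda$. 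Because $\lambda < \theta$, the measure $U$ has rank below $\theta$, so $U \in V_\theta = M_\theta \of M$; and since $M$ contains every subset of $P_\kappa \lambda$, it agrees that $U$ is a normal fine measure with the stated property.

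Next I would conclude by the combinatorial characterization of Shelah-for-supercompactness cardinals (the corollary to lemma \ref{theorem.HighJumpMeasure}), applied inside $M \satisfies \ZFC$: since every $f : \kappa \to \kappa$ admits such a witnessing measure in $M$, we get that $M \satisfies$ ``$\kappa$ is Shelah for supercompactness.'' Expressed through this characterization, being Shelah for supercompactness is a first-order property, so I may reflect it. Letting $W = \set{X \of \kappa \st \kappa \in j(X)}$ be the normal measure on $\kappa$ induced by $j$, elementarity gives that $j$ maps $\set{\delta < \kappa \st \delta \text{ is Shelah for supercompactness}}$ to the collection of $\delta < j(\kappa)$ that $M$ regards as Shelah for supercompactness, and $\kappa$ belongs to the latter. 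Hence the set of Shelah-for-supercompactness cardinals below $\kappa$ has $W$-measure one, and in particular there are many (indeed stationarily many) of them below $\kappa$.

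The main obstacle is the second step, namely guaranteeing that the witnessing measures truly lie in $M$. This is precisely where the strength of almost-high-jumpness is needed, as opposed to the weaker fact that $\kappa$ is already Shelah for supercompactness in $V$: it is the closure $M^{<\theta} \of M$, combined with $\theta$ being a strong limit, that forces $V_\theta = M_\theta$ and thereby places each seed measure $U$---which lives at a level $\lambda < \theta$---inside $M$. The remaining points are routine bookkeeping about absoluteness and the standard reflection through the normal measure $W$.
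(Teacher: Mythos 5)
Your proof is correct, and it reaches the paper's conclusion by a recognizably similar architecture --- induce a measure $U$ on some $P_\kappa \lambda$ with $\lambda$ below the clearance from the seed $j \image \lambda$, show $U \in M$, conclude that $\kappa$ is Shelah for supercompactness inside $M$, and reflect below $\kappa$ --- but the central verification step is genuinely different. The paper works with the embedding form of the definition: it takes $\lambda = \max(j(f)(\kappa), \kappa)$, shows the generating measure lies in $M$ using $2^{\lambda^{<\kappa}} < \theta$, proves $j(f)(\kappa) = j_0(f)(\kappa)$ by the factor-embedding argument of lemma \ref{lemma.FactorEmbeddingEquality}, and then argues that the ultrapower of $M$ by $U$, as computed in $M$, agrees with $j_0$ on $V_\theta = M_\theta$, so that this internal ultrapower is a witnessing embedding in $M$. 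You instead verify the combinatorial characterization (the corollary to lemma \ref{theorem.HighJumpMeasure}) inside $M$: choosing $\lambda$ \emph{strictly} above $j(f)(\kappa)$ --- a strictness your argument genuinely needs, since your seed computation reduces the defining condition to $j(f)(\kappa) < \ot(j \image \lambda) = \lambda$, whereas the paper's choice $\lambda = j(f)(\kappa)$ would make this false --- you check directly that $\set{A \in P_\kappa \lambda \st f(\ot(A \cap \kappa)) < \ot(A)} \in U$, observe that this membership and the normality and fineness of $U$ are absolute between $V$ and $M$ because $P(P_\kappa \lambda) \of V_\theta = M_\theta$, and then invoke the characterization as a \ZFC\ theorem holding in $M$. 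What your route buys is independence from lemma \ref{lemma.FactorEmbeddingEquality} and from any discussion of ultrapowers computed inside $M$; what it costs is reliance on the combinatorial characterization, which the paper's own proof of this proposition never uses. Your explicit reflection through the induced normal measure $W$ is also a point in your favor: the paper stops at ``$\kappa$ is Shelah for supercompactness in $M$'' and, following its stated convention, leaves that final step to the reader.
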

\begin{proof}
Suppose $j: V \to M$ is an \ahj\ embedding for $\kappa$ with clearance 
$\theta$. I will show 
that $\kappa$ is 
Shelah for
supercompactness in $M$. Let $f: \kappa \to \kappa$ 
be a
function in $M$.
Let $j_0: V \to M_0$ be the 
$\lambda$-supercompactness factor
embedding induced by $j$ via the seed $j \image \lambda$, where $\lambda$ is the 
maximum of 
$j(f)(\kappa)$ and $\kappa$.
From corollary 
\ref{lemma.ThetaStrongLimit}, it follows 
that 
$2^{\lambda^{<\kappa}} < \theta$. Therefore, the 
$\lambda$-supercompactness measure $U$ that 
generates $j_0$
is an element of $M$. By reasoning similar to the proof of lemma 
\ref{lemma.FactorEmbeddingEquality}, it 
follows 
that $j(f)(\kappa) = 
j_0(f)(\kappa)$. 
Let $j^M_0$ be the elementary embedding generated by 
$U$ in $M$. The 
measure $U$ is an element of $V_\theta = M_\theta$, 
which  satisfies 
$\ZFC$ by theorem 
\ref{Theorem.M_thetaElemInM_j(kappa)}.  It follows 
that $j_0 \restrict V_\theta = j_0^M \restrict 
M_\theta$. 
Therefore, in $M$, the elementary embedding $j_0^M$
witnesses that $\kappa$ is Shelah for 
supercompactness with
respect to the function $f$. Since $f$ was 
arbitrary, it
follows that $\kappa$ is Shelah for supercompactness 
in $M$.
\qed
\end{proof}
I wind up the section with a few miscellaneous propositions. 
Proposition \ref{proposition.LeastHJnotStr} shows that several direct 
implications are lacking from the large cardinal hierarchy.

\begin{proposition}\label{proposition.LeastHJnotStr}
The least \hj\ cardinal is not $\Sigma_2$-reflecting. In particular, it is not 
supercompact and not even strong. The same is true for the least almost-huge 
cardinal, the least \ahj\ cardinal, and the least Shelah-for-supercompactness 
cardinal.
\end{proposition}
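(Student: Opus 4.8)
The plan is to reduce the entire statement to a single general principle: the least cardinal satisfying any $\Sigma_2$ property fails to be $\Sigma_2$-reflecting. The bulk of the work is then to verify that each of the four large cardinal properties in question (\hj, \ahj, Shelah for supercompactness, and almost huge) is expressible by a $\Sigma_2$ formula, after which the ``in particular'' clause follows from the standard fact that strong and supercompact cardinals are $\Sigma_2$-reflecting.

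First I would isolate the general principle. Suppose $P(x)$ is a property equivalent to one of the local form $\exists \alpha\,\big(V_\alpha \models \varphi(x)\big)$ for a first-order $\varphi$, and suppose $\kappa$ is the least cardinal with $P(\kappa)$. Then the parameter-free sentence ``$\exists x\, P(x)$'' is equivalent to $\exists \alpha\,\big(V_\alpha \models \exists x\, \varphi(x)\big)$, hence is $\Sigma_2$, and it is true in $V$ as witnessed by $\kappa$. If $\kappa$ were $\Sigma_2$-reflecting, then $V_\kappa \prec_{\Sigma_2} V$ would force $V_\kappa \models \exists x\, P(x)$, producing some $\delta, \beta < \kappa$ for which $V_\kappa$ satisfies ``$V_\beta \models \varphi(\delta)$''. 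Since $V_\kappa$ computes $V_\beta$ correctly for $\beta < \kappa$ and the satisfaction relation $V_\beta \models \varphi(\delta)$ is evaluated inside $V_{\beta+\omega} \of V_\kappa$, this is a genuine witness in $V$; thus $P(\delta)$ holds with $\delta < \kappa$, contradicting the minimality of $\kappa$.

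The main work is verifying the local $\Sigma_2$ form for each property. For high-jump cardinals, lemma \ref{theorem.HighJumpMeasure} shows that $\kappa$ is high jump if and only if there exist a cardinal $\theta$ and a normal fine measure $U$ on $P_\kappa \theta$ with the stated combinatorial property; since $U \in V_{\theta+\omega}$ and the combinatorial condition quantifies only over functions $f\colon \kappa \to \kappa$ and order types, the whole statement has the form $\exists \alpha\,(V_\alpha \models \varphi(\kappa))$ and is $\Sigma_2$. The \ahj\ and almost-huge cardinals are treated identically, using their characterizations by coherent sequences of normal fine measures (see \cite[lemma 57]{Perlmutter2013:Dissertation} and the analogous characterization of almost-hugeness): the witnessing sequence has rank bounded by the clearance, respectively by $j(\kappa)$, plus $\omega$, so the property is again local. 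For Shelah for supercompactness the ultrafilter characterization (the corollary to lemma \ref{theorem.HighJumpMeasure}) gives the form $\forall f\colon \kappa \to \kappa\ \exists \theta\ \exists U\ [\ldots]$; the leading universal quantifier is bounded, ranging only over $V_{\kappa+1}$, so by the collection theorem the existential witnesses for the various $f$ can be gathered below a single ordinal, which again yields the form $\exists \alpha\,(V_\alpha \models \varphi(\kappa))$. Hence all four properties are $\Sigma_2$, and the general principle applies to each least such cardinal.

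Finally, for the ``in particular'' assertion I would invoke the standard fact that every strong cardinal, and therefore every supercompact cardinal, is $\Sigma_2$-reflecting; consequently a cardinal that is not $\Sigma_2$-reflecting is neither strong nor supercompact. I expect the main obstacle to be the complexity bookkeeping in the Shelah-for-supercompactness case, where one must confirm that the bounded universal quantifier over $f$ does not push the statement above $\Sigma_2$ — this is exactly where the collection theorem is needed. For the other three cardinals the only real content is checking that their measure-theoretic witnesses are set-sized objects whose rank is controlled by the clearance, so that the defining statement is genuinely local and the reflection argument goes through uniformly.
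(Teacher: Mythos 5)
Your proposal is correct and takes essentially the same approach as the paper: the paper's (much terser) proof likewise observes that each of the four properties admits a local $\Sigma_2$ characterization via measures or sets of measures visible inside some $V_\alpha$, and then invokes the $\Sigma_2$-reflection of strong and supercompact cardinals. You have simply filled in the details the paper leaves implicit, namely the general minimality-versus-reflection principle and the collection argument needed for the Shelah-for-supercompactness case.
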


\begin{proof} 
All of these cardinals can be characterized by $\Sigma_2$ definitions --- they 
are characterized by a measure or a set of measures with certain 
combinatorial 
properties, all of which can be seen from within a particular $V_\alpha$. Since 
supercompact and strong cardinals are 
$\Sigma_2$-reflecting, the theorem follows.
\qed
\end{proof}

The proofs of propositions \ref{lemma.AHJStrongerThanSC} and 
\ref{proposition.HJO2} are simple and are left as exercises for the reader. The 
complete proofs can be found in \cite[propositions 81 and 
83]{Perlmutter2013:Dissertation}
\begin{proposition} \label{lemma.AHJStrongerThanSC}
\label{proposition.AHJStrongerThanSC}
Suppose $j: V \to M$ is an elementary embedding with clearance $\theta$ 
witnessing that $\kappa$ is \ahj. Then in the model $V_\kappa$ there are many 
supercompact cardinals.
\end{proposition}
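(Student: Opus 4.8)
The plan is to produce unboundedly many cardinals below $\kappa$ that are supercompact from the point of view of $V_\kappa$, exploiting the embedding $j$ together with the elementarity $V_\theta \elem M_{j(\kappa)}$ from lemma \ref{lemma.VThetaElemMjkappa}. The guiding idea is that, although $\kappa$ need not be supercompact in $V$---by proposition \ref{proposition.LeastHJnotStr} the least \ahj\ cardinal need not even be strong---the clearance $\theta$ functions as a local ceiling below which $\kappa$ already looks fully supercompact; elementarity then converts this local appearance into genuine supercompactness inside $M_{j(\kappa)}$, whose ceiling $j(\kappa)$ lies far above $\theta$.

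First I would check that $\kappa$ is $\lambda$-supercompact for every cardinal $\lambda < \theta$, witnessed by a measure of rank below $\theta$. Since $j$ is an \ahj\ embedding with clearance $\theta$, we have $M^{<\theta} \of M$, so $M^\lambda \of M$ and $j \image \lambda \in M$ whenever $\lambda < \theta$; hence the normal fine measure $U_\lambda$ on $P_\kappa \lambda$ induced via $j$ by the seed $j \image \lambda$ witnesses $\lambda$-supercompactness. By lemma \ref{lemma.ThetaStrongLimit} the clearance $\theta$ is a strong limit with $\cof(\theta) > \kappa$, so $|P_\kappa \lambda| < \theta$, and therefore $U_\lambda$ has rank below $\theta$ and lies in $V_\theta$. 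As $V_\theta$ is a rank-initial segment of $V$ satisfying \ZFC\ that contains $P(P_\kappa \lambda)$, the measure $U_\lambda$ remains a normal fine measure as computed in $V_\theta$. Since $\lambda < \theta$ was arbitrary and $\kappa < \theta$, the cardinal $\kappa$ is supercompact in the sense of $V_\theta$.

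The crucial step is the transfer, together with reflection. Applying $V_\theta \elem M_{j(\kappa)}$ from lemma \ref{lemma.VThetaElemMjkappa}, the statement that $\kappa$ is supercompact, interpreted internally with parameter $\kappa$, passes from $V_\theta$ to $M_{j(\kappa)}$; here the target-cardinal quantifier that ranged over $\lambda < \theta$ in $V_\theta$ is reinterpreted to range over all $\lambda < j(\kappa)$ in $M_{j(\kappa)}$, so $\kappa$ is genuinely supercompact as computed in $M_{j(\kappa)}$. To reflect below $\kappa$, fix any $\beta < \kappa$; since $\crit(j) = \kappa$ we have $j(\beta) = \beta$. Then in $M$ there exists $\delta$ with $\beta < \delta < j(\kappa)$ that is supercompact in $M_{j(\kappa)}$, namely $\delta = \kappa$. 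Pulling this assertion back through the elementarity of $j$ yields, in $V$, a cardinal $\delta$ with $\beta < \delta < \kappa$ that is supercompact in $V_\kappa$. As $\beta$ was arbitrary, such $\delta$ are unbounded in $\kappa$, so there are many supercompact cardinals in $V_\kappa$.

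The main obstacle is precisely this transfer step: one must be sure that the phrase ``$\kappa$ is supercompact'', read inside $V_\theta$, is the genuine first-order supercompactness assertion whose quantifier is correctly reinterpreted under $V_\theta \elem M_{j(\kappa)}$. This in turn rests on the bookkeeping of the previous paragraph---that each induced measure $U_\lambda$ really lands in $V_\theta$ and survives as a normal fine measure there---which is exactly what the strong-limit and cofinality facts of lemma \ref{lemma.ThetaStrongLimit} provide.
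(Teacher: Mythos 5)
Your proof is correct and is essentially the intended argument: the paper leaves this proposition as an exercise (citing the dissertation), and your three steps --- the seed-induced measures $U_\lambda$ landing in $V_\theta$ for all $\lambda < \theta$ by the closure $M^{<\theta} \of M$ and the strong-limit property of $\theta$, the transfer of ``$\kappa$ is supercompact'' through $V_\theta \elem M_{j(\kappa)}$ via lemma \ref{lemma.VThetaElemMjkappa}, and the final reflection below $\kappa$ through the elementarity of $j$ --- are exactly the technique the paper deploys in the neighboring propositions \ref{proposition.HJplus}, \ref{proposition.HJ>SAHJ}, and \ref{proposition.AHJ>SSC}. The only cosmetic point is that the cofinality bound $\cof(\theta) \geq \kappa^+$ is not actually needed for $|P_\kappa\lambda| < \theta$; the strong-limit property of $\theta$ alone suffices there.
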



The definition of super high jump makes it tempting to think 
that every cardinal that is both supercompact and high jump is super high jump. 
However, the 
following simple 
proposition shows that this is not the case.

\begin{proposition} \label{proposition.HJO2}
If $\kappa$ is the least cardinal that has \hjot{2}, 
then in 
$V_\kappa$, there are many cardinals that are both supercompact and high jump 
but not 
super high jump.
\end{proposition}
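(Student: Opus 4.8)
The plan is to establish the sharper fact that inside a suitable rank-initial segment of $V$ the cardinal $\kappa$ itself is supercompact, \hjp, and not \shjp, and then to reflect this configuration below $\kappa$. Since $\kappa$ has \hjot{2}, there is a strictly increasing pair of clearances of \hj\ embeddings for $\kappa$; let $\theta_0$ be the least clearance of a \hj\ embedding for $\kappa$ and let $\theta_1$ be the least clearance strictly above $\theta_0$. Fix a \hj\ embedding $j : V \to M$ for $\kappa$ with clearance $\theta_1$, so $M^{\theta_1} \of M$. As $j$ is in particular an \ahj\ embedding, lemma \ref{lemma.VThetaElemMjkappa} gives $V_{\theta_1} \satisfies \ZFC$ and $V_{\theta_1} \elem M_{j(\kappa)}$.

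I would next check the three properties of $\kappa$ within $V_{\theta_1}$. For supercompactness, for each cardinal $\gamma < \theta_1$ the seed $j \image \gamma$ lies in $M$ and induces a normal fine $\gamma$-supercompactness measure on $P_\kappa \gamma$; this measure has rank below $\theta_1$, so it lies in $V_{\theta_1}$, and because $\theta_1$ is a $\beth$ fixed point it still measures every subset of $P_\kappa \gamma$ there. Hence $\kappa$ is $\gamma$-supercompact in $V_{\theta_1}$ for every cardinal $\gamma$ of $V_{\theta_1}$, i.e.\ supercompact in $V_{\theta_1}$. For high-jumpness, a \hj\ embedding of clearance $\theta_0$ factors, by lemma \ref{lemma.HJFactor}, through a \hj\ measure on $P_\kappa \theta_0$, which lies in $V_{\theta_1}$; the defining condition of lemma \ref{theorem.HighJumpMeasure} quantifies only over functions $f : \kappa \to \kappa$, all of which are already present in $V_{\theta_1}$, so the measure remains a \hj\ measure in $V_{\theta_1}$ and $\kappa$ is \hjp\ there. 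Finally, the same factoring together with the absoluteness of the condition of lemma \ref{theorem.HighJumpMeasure} shows that every clearance of a \hj\ embedding for $\kappa$ computed in $V_{\theta_1}$ is a clearance below $\theta_1$ computed in $V$; by the minimal choice of $\theta_0$ and $\theta_1$ the only such clearance is $\theta_0$, so $\kappa$ has \hjot{1} in $V_{\theta_1}$ and is in particular not \shjp\ there.

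To finish, let $\varphi(x)$ abbreviate ``$x$ is supercompact, \hjp, and not \shjp.'' Since $V_{\theta_1} \elem M_{j(\kappa)}$, the model $M_{j(\kappa)}$ satisfies $\varphi(\kappa)$. Given any $\beta < \kappa$, the cardinal $\kappa$ witnesses inside $M$ that some $\delta$ with $\beta < \delta < j(\kappa)$ satisfies $\varphi$ in $M_{j(\kappa)}$; applying the elementarity of $j$, which fixes $\beta$, yields a $\delta$ with $\beta < \delta < \kappa$ that satisfies $\varphi$ in $V_\kappa$. As $\beta$ was arbitrary, there are unboundedly many $\delta < \kappa$ that are simultaneously supercompact, \hjp, and not \shjp\ in $V_\kappa$, which is the desired conclusion.

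The step that I expect to require the most care is the verification that $\kappa$ is not \shjp\ in $V_{\theta_1}$. It depends on two absoluteness points: that truncating to the strong-limit stage $V_{\theta_1}$ neither creates nor destroys \hj\ measures on $P_\kappa \lambda$ for $\lambda < \theta_1$ (because $\theta_1$ is a $\beth$ fixed point and the high-jump condition of lemma \ref{theorem.HighJumpMeasure} is local to functions $\kappa \to \kappa$), and that no \hj\ embedding internal to $V_{\theta_1}$ can have clearance cofinal in $\theta_1$. Choosing $(\theta_0, \theta_1)$ to be the two least clearances is essential here: it is what confines the set of clearances available in $V_{\theta_1}$ to the single value $\theta_0$ and thereby prevents $\kappa$ from being \shjp\ in $V_{\theta_1}$; a larger choice of $\theta_1$, with clearances accumulating below it, could leave $\kappa$ \shjp\ there and defeat the reflection.
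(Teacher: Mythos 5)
Your proof is correct. A direct comparison is complicated by the fact that the paper does not print a proof of proposition \ref{proposition.HJO2} at all --- it is explicitly left as an exercise, with the full argument deferred to \cite[proposition 83]{Perlmutter2013:Dissertation} --- but your argument is exactly the technique the paper uses for the neighboring results (propositions \ref{proposition.HJplus} and \ref{proposition.HJ>SAHJ}): truncate at a clearance, verify the desired configuration in $V_{\theta}$, transfer it to $M_{j(\kappa)}$ via lemma \ref{lemma.VThetaElemMjkappa}, and reflect below $\kappa$ using the elementarity of $j$. Your individual verifications (the seed-induced measures having rank below $\theta_1$, the absoluteness between $V$ and $V_{\theta_1}$ of the \hj-measure condition of lemma \ref{theorem.HighJumpMeasure} and of the clearance computation, and the pull-back through $j$, which fixes $\beta$) are all sound. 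The one genuine difference is that you never use the stated hypothesis that $\kappa$ is the \emph{least} cardinal of \hjot{2}; you replace it by minimality of the two clearances $\theta_0 < \theta_1$, and so you in fact prove the stronger statement that the conclusion holds below \emph{any} cardinal of \hjot{2}. The natural argument that exploits the stated hypothesis is shorter at the ``not \shjp'' step: if $\kappa$ is least of \hjot{2}, then no $\delta < \kappa$ can have \hjot{2} in $V_\kappa$ (two \hj\ measures in $V_\kappa$ with distinct clearances would remain \hj\ measures in $V$ with the same clearances, contradicting minimality), so no cardinal of $V_\kappa$ is \shjp\ at all, and one then only needs to reflect the conjunction ``supercompact and \hjp.'' Your route pays for the weaker hypothesis by requiring the finer absoluteness analysis that confines the clearances available in $V_{\theta_1}$ to the single value $\theta_0$; as you note, that is the delicate step, and it does go through.
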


%

Proposition \ref{proposition.HJO2} shows that below a cardinal of \hj\ order 
$2$, there are 
many cardinals that are both \hjp\ and tall. If a cardinal $\kappa$ is both 
\hjp\ 
and tall, then there are \hj\ embeddings for $\kappa$ such that $j(\kappa)$ is 
arbitrarily large --- this can be seen by taking a \hj\ embedding for $\kappa$ 
followed by a tallness embedding for $j(\kappa)$. It is easy to see that below 
a 
cardinal that is both \hj\ and supercompact, there are many \hj\ cardinals. But 
it is an open question whether the existence of a cardinal that is both \hj\ 
and 
tall is equiconsistent with the existence of a \hj\ cardinal.

Many more definitions could be made along the lines of the ones given in this 
section, and these definitions would lead to many more questions of consistency 
strength. In light of proposition \ref{proposition.HJplus}, the following 
question comes to mind.

\begin{question} \label{question.ClosureTheta+} What is the consistency 
strength 
of the existence of a \hj\ embedding
with critical point $\kappa$ and clearance $\theta$ generated by a \hj\ measure 
on $P_\kappa \theta^+$? Is the 
existence of such an embedding equiconsistent with the existence of a \hj\ 
embedding with critical point $\kappa$ and clearance $\theta$ generated by a 
\hj\ 
measure on $P_\kappa 2^{\theta}$?
\end{question}
Of course, under \GCH, these two types of embeddings are equivalent.

\section[Equivalence of \Vopenka\ and Woodin-for-s.c. cardinals]{The 
equivalence of \Vopenka\ cardinals and Woodin-for-supercompactness
cardinals}\label{section.WoodinSC=Vopenka}

In this section, I define a Woodin-for-supercompactness cardinal and show that 
it is strictly weaker than a Shelah-for-supercompactness cardinal. I then show 
that a cardinal is Woodin for supercompactness if and only if it is a \Vopenka\ 
cardinal.

The definition of a Woodin-for-supercompactness cardinal is as follows. This 
definition 
is taken from \cite[2]
{ApterSargsyan2007:ReductionInCons(WoodinSC)}. These cardinals have also been 
studied by Foreman 
\cite[p.31]{Foreman07:GenericElemEmb3} and by Fuchs 
\cite[p.1043]{Fuchs09:CombinedMaximality} under the name of Woodinized 
supercompact cardinals. 
A Woodin-for-supercompactness cardinal is the analogue of a Woodin cardinal, 
with supercompactness in place of strongness.

\begin{definition} \label{definition.WSC} A cardinal 
$\delta$ is \textbf{Woodin for
supercompactness} if and only if for every function 
$f: \delta
\to \delta$, there exists a cardinal $\kappa < \delta$ such 
that $\kappa$ is a 
closure point of $f$ (i.e.\ $f \image
\kappa \of \kappa$), and there exists an elementary 
embedding $j:
V \to M$ with critical point $\kappa$ 
such that
$M^{j(f)(\kappa)} \of M$. 
\end{definition}

Note in particular that every Woodin-for-supercompactness cardinal is also a 
Woodin cardinal.

Apter and Sargsyan require that $f$ be defined so 
that $f(\alpha)$ 
is always a cardinal and so that the elementary 
embedding $j$ is 
generated by a 
supercompactness measure on $P_\kappa \lambda$ for 
some $\lambda < 
\delta$, but 
this definition is equivalent to the definition given here. 
The first requirement does not make their definition any weaker, because  any 
function $f$ can be replaced with a function $f'$ 
given by 
$f'(\alpha) = |f(\alpha)|^+$. Theorem
\ref{theorem.WSCcharacterization} below shows that the second requirement does 
not make 
their 
definition any stronger.

The next theorem shows that a Shelah-for-supercompactness 
cardinal is strictly stronger than a Woodin-for-supercompactness 
cardinal. The 
proof is an improvement on lemma 
1.1 of
\cite{ApterSargsyan2007:ReductionInCons(WoodinSC)} 
and 
uses a similar line of reasoning. 

\begin{theorem}
\label{proposition.ShelahSCStrongerThanWoodinSC}
\label{theorem.ShelahSCStrongerThanWoodinSC}
Suppose the cardinal $\kappa$ is Shelah for supercompactness. 
Then $\kappa$ is 
Woodin for supercompactness, and there are many 
cardinals below 
$\kappa$ that are Woodin for supercompactness in both $V_\kappa$ and $V$.
\end{theorem}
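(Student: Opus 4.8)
The plan is to prove the direct implication by reflecting a single Shelah-for-supercompactness embedding at $\kappa$ down to a witness below $\kappa$, and then to obtain the many witnesses below $\kappa$ by a normal-measure reflection. Throughout I will work with the measure formulation of Woodin-for-supercompactness, which is legitimate by theorem \ref{theorem.WSCcharacterization}: a cardinal $\delta$ is Woodin for supercompactness if and only if for every $f : \delta \to \delta$ there are $\gamma < \delta$ a closure point of $f$, a cardinal $\lambda' < \delta$, and a normal fine measure $W$ on $P_\gamma \lambda'$ whose ultrapower $j_W$ satisfies $j_W(f)(\gamma) \leq \lambda'$. The virtue of this reformulation is that the witness $W$ lies in $V_\delta$ and the condition on it is absolute, which is exactly what makes reflection possible.

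For the direct implication, fix $f : \kappa \to \kappa$; after replacing $f$ by $\alpha \mapsto \max(f(\alpha), \alpha^+)$, which only strengthens the requirement, I may assume $f(\alpha) \geq \alpha^+$. The key device is to \emph{inflate} $f$ before invoking Shelah-for-supercompactness: let $g(\alpha) = (2^{|f(\alpha)|^{<\alpha}})^+$ and apply the Shelah-for-supercompactness of $\kappa$ to $g$, obtaining $j : V \to M$ with critical point $\kappa$ and $M^{j(g)(\kappa)} \of M$. Writing $\lambda = j(f)(\kappa)$, elementarity gives $j(g)(\kappa) = (2^{\lambda^{<\kappa}})^{+M}$, so $M$ is closed under sequences of length beyond $2^{\lambda^{<\kappa}}$ and therefore agrees with $V$ on $P(\lambda^{<\kappa})$. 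Let $U$ be the normal fine measure on $P_\kappa \lambda$ induced via $j$ by the seed $j \image \lambda$. Since $|U| \leq 2^{\lambda^{<\kappa}} < j(g)(\kappa)$ and every member of $U$ is captured by the closure of $M$, the whole measure $U$ lands in $M$ and is recognized there as a normal fine measure on $P_\kappa \lambda$. The factor-embedding argument used in the proof of lemma \ref{lemma.FactorEmbeddingEquality} shows $j_U(f)(\kappa) = \lambda$, and since $j(f) \restriction \kappa = f \restriction \kappa$ the Łoś computation at $\kappa$ gives $j_U(j(f))(\kappa) = \lambda$ as well, with the ultrapower of $M$ by $U$ closed under $\lambda$-sequences. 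Hence $M$ sees that $\kappa < j(\kappa)$ is a closure point of $j(f)$ carrying a supercompactness measure witnessing the Woodin-for-supercompactness requirement for $j(f)$.

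Applying the elementarity of $j$ to this last assertion, which is the $j$-image of a statement about $f$ and $\kappa$, yields in $V$ a cardinal $\gamma < \kappa$ that is a closure point of $f$ together with a witnessing measure for $f$; as $f$ was arbitrary, $\kappa$ is Woodin for supercompactness. The main obstacle is precisely securing $U \in M$: the supercompactness measure has size about $2^{\lambda^{<\kappa}}$, which outstrips the closure $M^\lambda \of M$ that a naive application of Shelah-for-supercompactness to $f$ itself would supply. The inflation of $f$ to $g$ is what overcomes this, buying enough closure on $M$ to swallow the measure while leaving the relevant value $j(f)(\kappa) = \lambda$ untouched.

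For the many witnesses below $\kappa$, I would first note that Woodin-for-supercompactness of a cardinal $\gamma < \kappa$ is absolute between $V_\kappa$ and $V$, since the relevant functions $\gamma \to \gamma$ and the candidate witnessing measures all have rank below $\kappa$ and the witnessing condition is absolute. Next, for \emph{any} embedding $j : V \to M$ with critical point $\kappa$, the fact just proved that $\kappa$ is Woodin for supercompactness in $V$ supplies, for every $f' : \kappa \to \kappa$ lying in $M_{j(\kappa)}$ (hence in $V$), a witnessing measure in $V_\kappa \of M_{j(\kappa)}$; since $M_{j(\kappa)}$ has no more such functions than $V$ and recognizes these witnesses, $M_{j(\kappa)}$ satisfies that $\kappa$ is Woodin for supercompactness. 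Taking $j$ to be the ultrapower by a normal measure $\mu$ on $\kappa$, which exists because $\kappa$ is in particular measurable, this gives $\kappa \in j_\mu(X)$, where $X = \set{\gamma < \kappa \st \gamma \text{ is Woodin for supercompactness in } V_\kappa}$, so $X \in \mu$ and $X$ is stationary. This produces many Woodin-for-supercompactness cardinals below $\kappa$ in $V_\kappa$, and by the absoluteness noted above the same cardinals are Woodin for supercompactness in $V$.
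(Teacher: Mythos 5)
Your proposal is correct, and its core follows the paper's own proof: for the implication you perform exactly the same maneuver the paper does, namely inflating $f$ to a faster-growing $g$ before invoking Shelah-for-supercompactness, deriving the measure $U$ on $P_\kappa\big(j(f)(\kappa)\big)$ from the seed $j \image j(f)(\kappa)$, arguing $U \in M$ and that $M$ computes the ultrapower by $U$ correctly, checking $j_U(j(f))(\kappa) = j(f)(\kappa)$ by the factor-embedding equality, and pulling the witnessed statement back through the elementarity of $j$. Two differences are worth noting. First, your inflation $g(\alpha) = \big(2^{|f(\alpha)|^{<\alpha}}\big)^+$ is more generous than the paper's $g(\alpha) = \scexp{f(\alpha)}{\alpha}$; you correctly identify that the entire point of the inflation is to secure $U \in M$, and your choice makes that step airtight, since a measure of size $2^{\lambda^{<\kappa}}$ is swallowed outright by closure under $j(g)(\kappa)$-sequences. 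Second, for the ``many cardinals below $\kappa$'' clause you take a genuinely different route: the paper observes that ``Woodin for supercompactness'' is $\Pi^1_1$-definable over $V_\kappa$ and that Shelah-for-supercompactness cardinals, being weakly compact, are $\Pi^1_1$-indescribable, so the statement reflects; you instead prove that $\big(M_\mu\big)_{j_\mu(\kappa)}$ satisfies that $\kappa$ is Woodin for supercompactness, where $j_\mu$ is the ultrapower by a normal measure on $\kappa$, and conclude that the set of reflecting points has measure one. The paper's argument is shorter once indescribability is quoted; yours uses only measurability of $\kappa$, is more hands-on, and explicitly produces a normal-measure-one (hence stationary) set of witnesses, together with the absoluteness of the witnessing condition between $V_\kappa$, $M_{j_\mu(\kappa)}$, and $V$ that both arguments ultimately rely on.
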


\begin{proof}

Let $\kappa$ be Shelah for supercompactness. The main difficulty is to show 
that 
$\kappa$ is Woodin for supercompactness. Once this has been shown, it is 
immediate that there are many cardinals below $\kappa$ that are Woodin for 
supercompactness in the model $V_\kappa$, because ``The cardinal $\kappa$ is 
Woodin for 
supercompactness'' is $\Pi_1^1$-definable over $V_\kappa$, and 
Shelah-for-supercompactness cardinals are $\Pi_1^1$-indescribable, since they 
are weakly 
compact. Furthermore, it is easily seen that any cardinal that is Woodin for 
supercompactness in the model $V_\kappa$ must also be Woodin for 
supercompactness in $V$.

Towards showing that $\kappa$ is Woodin for supercompactness, let $f: \kappa 
\to 
\kappa$ be an 
arbitrary 
function. I will show that $\kappa$ satisfies the 
definition of 
Woodin for supercompactness with respect to $f$. Without loss of generality, I 
assume that $f$ is nowhere regressive, that is to say, for all $\alpha < 
\kappa$ the inequality $\alpha \leq f(\alpha)$ holds. It follows that $\kappa 
\leq j(f)(\kappa).$ 
Let $g: \kappa \to \kappa$ be given by $g(\alpha) = 
\scexp{f(\alpha)}{\alpha}$. Let $j:V \to M$ witness that $\kappa$ is 
Shelah for 
supercompactness with respect to the function $g$, that is, the embedding $j$ 
has critical 
point $\kappa$ and $M^{j(g)(\kappa)} \of M$. Note that $j(g)(\kappa) = \scexp 
{(j(f)(\kappa))}{\kappa}$, as calculated in both $M$ and $V$.

Let $U$ be the normal 
fine measure 
on $P_\kappa \big(j(f)(\kappa)\big)$ induced via $j$ by the seed $j(f)(\kappa)$. 
Let 
$j_U: V \to N$ be the $j(f)(\kappa)$-supercompactness 
factor 
embedding induced by $U$. Let 
$k: N 
\to M$ be 
the elementary embedding such that $k \circ h = j$. 
The model $M$ is 
sufficiently closed so that $U \in M$, and the closure of $M$ further 
guarantees 
that every function from $P_\kappa\big(j(f)(\kappa)\big)$ to $M$ is an element 
of $M$. It 
follows that the elementary embedding induced by $U$ in $M$, as calculated in 
$M$, is equal to $j_U \restrict M$, as calculated in $V$.

\begin{diagram}[w=5em]
 V & \rTo^j
& M & \rTo^{j_U \restrict M} & N'  \\
\dTo<{j_U} & \ruTo>{k} & \\
 N & &\\ 
& & \figlist{Factor embeddings of a Shelah-for-supercompactness embedding}
\end{diagram}


Next, I will show that 
\begin{align} \label{thingie}
j(f)(\kappa) = j_U(j(f))(\kappa)
\end{align} 

First of all, $f = j(f) \restrict \kappa$. Applying $j_U$ to both sides, it 
follows that $j_U(f) = j_U(j(f)) \restrict j_U(\kappa),$ and 
in particular, $j_U(f)(\kappa) = j_U(j(f))(\kappa).$ 
Furthermore, by reasoning similar to the proof of lemma 
\ref{lemma.FactorEmbeddingEquality}, it follows that 
$j(f)(\kappa) = j_U(f)(\kappa)$, and so statement \ref{thingie} is proven.

The embedding $j_U \restrict M$ is generated in $M$ by the measure $U \in M$. 
Therefore, in 
$M$, the cardinal $j(\kappa)$ satisfies the 
definition of Woodin for 
supercompactness with respect to the function $j(f)$ 
--- this fact is 
witnessed by the embedding $j_U \restrict M$ along with statement \ref{thingie},
since 
$\kappa$ is a 
closure point of 
$j(f)$. It follows from the elementarity of $j$ that in $V$, the cardinal 
$\kappa$ satisfies the 
definition of 
Woodin for supercompactness with respect to the 
function $f$. But 
$f$ was chosen arbitrarily, so $\kappa$ is Woodin 
for 
supercompactness in $V$.
\qed
\end{proof}

Like Woodin cardinals, Woodin-for-supercompactness cardinals have several 
alternative characterizations, as described below in theorem 
\ref{theorem.WSCcharacterization}. The proof that these characterizations hold 
is essentially the same as for the case of Woodin cardinals, (see \cite[Theorem 
26.14]{Kanamori:TheHigherInfinite2ed}). In order to state them, I 
need the following definition of $(\gamma, A)$\nobreakdash-supercompactness.

\begin{definition}
Given a set $A$ and a cardinal $\gamma$, the cardinal $\kappa$ is 
$\mathbf{( \boldsymbol{\gamma}, A)}$\textbf{-supercompact}  if and only if 
there 
is an 
elementary 
embedding $j: V \to M$ 
with critical point $\kappa$ such that

\begin{enumerate}
\item $\gamma < j(\kappa)$, 
\item $M^\gamma \of M$, and 
\item $A \cap V_{\gamma} = j(A) \cap 
V_{\gamma}.$
\end{enumerate}
\end{definition}

Given cardinals $\kappa$ and $\delta$ such that $\kappa < \delta$, the notation 
\emph{$\kappa$ is $(<\!\!\delta, 
A)$-supercompact} denotes that $\kappa$ is $(\gamma, A)$-supercompact for all 
cardinals
$\gamma < \delta$.
The alternative characterizations of Woodin-for-supercompactness cardinals are 
as follows.

\begin{theorem}\label{theorem.WSCcharacterization}
Given a cardinal $\delta$, the following are 
equivalent.

\begin{enumerate}
\item\label{hypothesis.WSCstrict}

For every function 
$f: \delta
\to \delta$, there exists a cardinal $\kappa < \delta$ such 
that $\kappa$ is a 
closure point of $f$ (i.e.\ $f \image
\kappa \of \kappa$), and there exists an elementary 
embedding $h:
V \to M$ with critical point $\kappa$ 
such that
$M^{h(f)(\kappa)} \of M$. Furthermore, the embedding $h$ is generated by a 
normal fine measure on $P_\kappa \lambda$ for some cardinal $\lambda < \delta$.

\item \label{hypothesis.WSC} The cardinal $\delta$ 
is 
Woodin for 
supercompactness. That is to say, for every function 
$f: \delta
\to \delta$, there exists a cardinal $\kappa < \delta$ such 
that $\kappa$ is a 
closure point of $f$ (i.e.\ $f \image
\kappa \of \kappa$), and there exists an elementary 
embedding $j:
V \to M$ with critical point $\kappa$ 
such that
$M^{j(f)(\kappa)} \of M$. 
\item \label{hypothesis.ManyASC} For every set $A \of 
V_\delta$, the set 
$$\set{\kappa< \delta \st \kappa \text{ is } 
(<\!\!\delta, A) \text{-supercompact}}$$ is stationary in $\delta$. 
\item \label{hypothesis.OneASC} For every set of ordinals $A \of 
\delta$, the set 
$$\set{\kappa< \delta \st \kappa \text{ is } 
(<\!\!\delta, A) \text{-supercompact}}$$ is nonempty.

\end{enumerate}
\end{theorem}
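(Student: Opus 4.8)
The plan is to prove the equivalence of the four characterizations of a Woodin-for-supercompactness cardinal by establishing the cycle of implications $(\ref{hypothesis.WSCstrict}) \Rightarrow (\ref{hypothesis.WSC}) \Rightarrow (\ref{hypothesis.ManyASC}) \Rightarrow (\ref{hypothesis.OneASC}) \Rightarrow (\ref{hypothesis.WSCstrict})$, following the classical template for the corresponding theorem on Woodin cardinals as in \cite[Theorem 26.14]{Kanamori:TheHigherInfinite2ed}. The implication $(\ref{hypothesis.WSCstrict}) \Rightarrow (\ref{hypothesis.WSC})$ is immediate, since the first characterization is just the second with the additional clause that the witnessing embedding is generated by a supercompactness measure on some $P_\kappa \lambda$ with $\lambda < \delta$. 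The implication $(\ref{hypothesis.ManyASC}) \Rightarrow (\ref{hypothesis.OneASC})$ is also immediate, as a stationary set is nonempty; one only passes from arbitrary $A \of V_\delta$ to sets of ordinals $A \of \delta$, which is harmless since any set of ordinals below $\delta$ is in particular a subset of $V_\delta$.

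The substantive work lies in the two remaining implications. For $(\ref{hypothesis.WSC}) \Rightarrow (\ref{hypothesis.ManyASC})$, I would follow the Woodin-cardinal argument: fix a set $A \of V_\delta$ and a club $C \of \delta$, and seek a $(<\!\delta, A)$-supercompact $\kappa \in C$. The idea is to define a function $f: \delta \to \delta$ coding both the failure of $(<\!\delta, A)$-supercompactness and the club $C$, so that at any closure point $\kappa$ of $f$ arising from the Woodin-for-supercompactness property, the value $j(f)(\kappa)$ forces the witnessing embedding $j: V \to M$ to achieve agreement $A \cap V_\gamma = j(A) \cap V_\gamma$ for all $\gamma < \delta$ simultaneously. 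Concretely, $f(\alpha)$ would record, for each $\gamma$, the least threshold past which $\kappa$ fails to be $(\gamma, A)$-supercompact; the closure-point condition ensures $\kappa \in C$, while the strong closure $M^{j(f)(\kappa)} \of M$ together with elementarity lets one reflect the agreement of $A$ below $\delta$ up through the embedding. The key point is that the supercompactness closure of $M$ up to $j(f)(\kappa)$ guarantees $M^\gamma \of M$ for every relevant $\gamma < \delta$, delivering clause (2) of $(\gamma, A)$-supercompactness.

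For the closing implication $(\ref{hypothesis.OneASC}) \Rightarrow (\ref{hypothesis.WSCstrict})$, given an arbitrary $f: \delta \to \delta$ I would code $f$ as a set of ordinals $A \of \delta$ (using a pairing function), apply $(\ref{hypothesis.OneASC})$ to obtain a $(<\!\delta, A)$-supercompact cardinal $\kappa$, and then argue that $\kappa$ is a closure point of $f$ and that the witnessing supercompactness embedding has the required strength $M^{j(f)(\kappa)} \of M$ with a measure on some $P_\kappa \lambda$, $\lambda < \delta$. The agreement $A \cap V_\gamma = j(A) \cap V_\gamma$ lets one recover that $j(f) \restrict \gamma$ matches $f \restrict \gamma$ on the coded data, so taking $\gamma$ large enough above $j(f)(\kappa)$ yields an embedding generated by a measure on $P_\kappa \gamma$ with the desired closure. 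The main obstacle I anticipate is the bookkeeping in $(\ref{hypothesis.WSC}) \Rightarrow (\ref{hypothesis.ManyASC})$: one must design the coding function $f$ carefully so that a single closure point simultaneously handles \emph{all} $\gamma < \delta$, which is exactly where the supercompactness-flavored closure (rather than mere strongness) must be exploited, and where the argument genuinely diverges from the Woodin-cardinal case. Since the paper states the proof is essentially the same as for Woodin cardinals, I would verify that the supercompactness closure clause (2) poses no additional difficulty beyond the strongness agreement clause (3), and otherwise defer the routine verifications.
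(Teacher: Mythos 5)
Your skeleton matches the template that the paper's proof points to: the paper gives no details, deferring to the standard Woodin-cardinal argument (Kanamori, Theorem 26.14) and to theorem 88 of the dissertation, and your cycle $(1)\Rightarrow(2)\Rightarrow(3)\Rightarrow(4)\Rightarrow(1)$, with the two trivial legs and the coding of $f$ as a set of ordinals for $(4)\Rightarrow(1)$, is exactly that template. The genuine gap is in your account of the hard leg $(2)\Rightarrow(3)$. You propose to design $f$ so that ``the value $j(f)(\kappa)$ forces the witnessing embedding $j$ to achieve agreement $A \cap V_\gamma = j(A)\cap V_\gamma$ for all $\gamma < \delta$ simultaneously.'' This cannot work. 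Clause $(2)$ hands you an embedding with a prescribed amount of closure and nothing else; an ordinal-valued function $f:\delta\to\delta$ carries no information about the predicate $A$ that $j$ is obliged to respect, and the only agreement between $A$ and $j(A)$ that comes for free is $A\cap V_\kappa = j(A)\cap V_\kappa$, below the critical point. Moreover no single embedding supplied by clause $(2)$ could witness $(\gamma,A)$-supercompactness for every $\gamma<\delta$ at once (each $\gamma$ needs its own witness), so the step where you ``reflect the agreement of $A$ below $\delta$ up through the embedding'' is precisely the missing mathematical content, not a routine verification to be deferred.

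The standard argument instead works by contradiction \emph{inside $M$ with the predicate $j(A)$}, never establishing any agreement between $A$ and $j(A)$ above $\kappa$. Assume no $\kappa\in C$ is $(<\!\!\delta,A)$-supercompact (after joining $C$ into $A$ one may assume $C=\delta$), let $g(\xi)$ be the least failure level for $\xi$, and apply clause $(2)$ to a function $f$ that dominates $g$ with substantial room to spare. By elementarity, $\gamma^* := j(g)(\kappa)$ is the least $\gamma$ such that, in $M$, the cardinal $\kappa$ is not $(\gamma, j(A))$-supercompact. The closure $M^{j(f)(\kappa)} \of M$ then puts the normal fine measure $U$ on $P_\kappa \gamma^*$ induced via $j$ by the seed $j \image \gamma^*$ inside $M$, and one verifies that, in $M$, the ultrapower by $U$ witnesses that $\kappa$ \emph{is} $(\gamma^*, j(A))$-supercompact, the agreement clause being checked for $j(A)$ via the factor map $k$ with $k\circ j_U = j$ (which traces back to the automatic below-critical-point agreement, pushed up by $j$). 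That contradicts $M$'s own computation of $\gamma^*$; only the resulting existence statement is pulled back to $V$ by elementarity. Note also that this is where the supercompactness case genuinely differs from the Woodin case, and not where you locate the difference: for strongness it suffices that $V_{\gamma^*+\omega}$ lands in the target model, whereas here $f$ must grow fast enough (roughly $f(\xi) \geq 2^{g(\xi)^{<\xi}}$) that the closure of $M$ captures an entire measure on $P_\kappa \gamma^*$ rather than a rank-initial segment --- exactly the wrinkle your final paragraph waves away.
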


\begin{proof}
This is the analogue of a standard theorem for Woodin cardinals, and the proof 
is essentially the same as for that theorem. See \cite[theorem 
88]{Perlmutter2013:Dissertation} for details.
\qed
\end{proof}

I now shift my attention to the \Vopenka\ cardinals, which I will eventually 
show are equivalent to the Woodin-for-supercompactness cardinals.

\begin{definition}\label{definition.Vopenka}
The cardinal $\delta$ is \textbf{\Vopenka} if and only if for every 
$\delta$-sequence 
of 
model-theoretic structures $\langle M_\alpha \st \alpha< \delta 
\rangle$ over the same language, with each structure $M_\alpha$ an element of 
$V_\delta$, there exists an elementary embedding $j: M_\alpha \to M_\beta$ for 
some ordinals $\alpha < \beta < \delta$. 
\end{definition}

The following result is well-established in the large cardinal 
literature.

\begin{proposition}
\label{proposition.LeastVopenkaNotWC} 
\label{proposition.LeastWoodinForSCNotMeas}
The least cardinal that is \Vopenka\ is not weakly 
compact. 
\end{proposition}

\begin{proof}
``The cardinal $\kappa$ is \Vopenka'' is definable by 
a 
$\Pi_1^1$ formula over $V_\kappa$, but weakly compact 
cardinals are $\Pi_1^1$-indescribable.
\qed
\end{proof}

It is convenient to have a characterization of \Vopenka\ cardinals in terms of 
a more limited class of model-theoretic structures. Towards this end, following 
\cite{Kanamori:TheHigherInfinite2ed}, I make the following definition.

\begin{definition}
Let $\delta$ be a cardinal. A sequence of model-theoretic structures, $\< 
M_\alpha \st \alpha < \delta>$ is a \textbf{natural $\delta$-sequence} if 
and only if the 
following properties are satisfied. There is a function $f: \delta \to \delta$ 
such that the domain of $M_\alpha$ is $V_{f(\alpha)}$ and such that whenever 
$\alpha < \beta < \delta$, are ordinals, it follows that $\alpha < f(\alpha) 
\leq f(\beta) < \delta$. Furthermore, each structure $M_\alpha$ is of the form 
$(V_{f(\alpha)}, \in, \singleton{\alpha}, R_\alpha)$, for some unary relation 
$R_\alpha \of V_{f(\alpha)}$. 
\end{definition}

Without loss of generality, $R_\alpha$ may be taken to 
encode finitely many constants,  
functions, and relations of any finite arity on $V_{f(\alpha)}.$ I will show in 
proposition \ref{proposition.NaturalSequence} below that it suffices to 
consider 
only natural sequences when 
determining whether the cardinal $\kappa$ is \Vopenka. 

It is immediately clear that any sequence of the type specified in definition 
\ref{definition.Vopenka}  can be encoded as a natural sequence if the language 
involved is countable. For larger languages, one might be concerned that 
the critical point of the embedding would be small enough to mess up the 
encoding. 

To clear up this concern, I will discuss the critical point of an 
elementary 
embedding between two elements of a natural sequence. This analysis will also 
play an important role in the proof of the equivalence between \Vopenka\ 
cardinals and Woodin-for-supercompactness cardinals. The inclusion of the 
constant $\singleton{\alpha}$ in the definition of a natural sequence ensures 
that any elementary embedding between 
members of a natural sequence has a critical point. 

I begin the analysis of 
these critical points with the definition of the \Vopenka\ filter, given below 
in 
definition \ref{definition.VopenkaFilter}.
The \Vopenka\ filter is actually a filter on $\delta$ if and only if $\delta$ 
is 
a 
\Vopenka\ cardinal. In this case, the \Vopenka\ filter is a normal filter and 
contains every club.\footnote{Every normal \emph{ultrafilter} on a cardinal 
$\delta$ contains every club. But a normal \emph{filter} on a cardinal $\delta$ 
contains every club if and only if it contains every tail. The \Vopenka\ filter 
on a \Vopenka\ cardinal is not in general an ultrafilter, by proposition 
\ref{proposition.LeastVopenkaNotWC}.} These facts are 
proven in 
\cite[pp.336-337]{Kanamori:TheHigherInfinite2ed} and \cite[proposition 
6.3]{SolovayReinhardtKanamori1978:StrongAxiomsOfInfinity}.

\begin{definition}[{\cite[p.336]{Kanamori:TheHigherInfinite2ed}}]
\label{definition.VopenkaFilter}
Let $\delta$ be an inaccessible cardinal. Then the set $X \of \delta$ is a 
member 
of the \textbf{\Vopenka\ filter} on $\delta$ if and only if there exists a 
 natural $\delta$-sequence $\<M_\alpha \st \alpha < \delta>$ such 
that 
whenever $j: M_\alpha \to M_\beta$ is an elementary embedding, then the 
critical 
point of $j$ is an element of $X$. 
\end{definition}

%
%
%
Given an elementary embedding $j: V_\alpha \to V_\beta$, the critical point of 
$j$ must be inaccessible, and so if  $\delta$ is a \Vopenka\ cardinal, then the 
set of inaccessible cardinals below $\delta$ is a member of the \Vopenka\ 
filter 
on $\delta$. In particular, this implies that $\delta$ is Mahlo, since the 
\Vopenka\ filter contains every club.

The next proposition states that it suffices to consider only natural sequences 
in defining \Vopenka\ cardinals.

\begin{proposition} \label{proposition.NaturalSequence}
The cardinal $\delta$ is \Vopenka\ if and only if for every normal 
$\delta$-sequence $\< M_\alpha \st \alpha < \delta>$, there is an elementary 
embedding $j: M_\alpha \to M_\beta$ for some ordinals  
$\alpha, \beta < \delta$. 
\end{proposition}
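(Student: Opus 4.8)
The plan is to prove the two directions separately, with the forward direction essentially immediate and the backward direction carrying all of the content. First note that either hypothesis forces $\delta$ to be inaccessible: any elementary embedding between members of a natural sequence is an elementary embedding between two ranks $V_{f(\alpha)} \to V_{f(\beta)}$ and hence has an inaccessible critical point, so the existence of such embeddings for all natural sequences already requires unboundedly many inaccessibles below $\delta$, and in fact Mahloness, by the critical-point analysis preceding definition \ref{definition.VopenkaFilter}. For the forward direction, suppose $\delta$ is \Vopenka. A natural $\delta$-sequence $\< M_\alpha \st \alpha < \delta>$ consists of structures $M_\alpha = (V_{f(\alpha)}, \in, \singleton{\alpha}, R_\alpha)$ over a common finite language, and each such $M_\alpha$ is an element of $V_\delta$ since $f(\alpha) < \delta$. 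Thus a natural $\delta$-sequence is a special case of the sequences quantified over in definition \ref{definition.Vopenka}, and the existence of an elementary embedding between two of its members is immediate.

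For the backward direction, I would assume that every natural $\delta$-sequence admits an elementary embedding between two of its members and let $\< A_\alpha \st \alpha < \delta>$ be an arbitrary sequence of structures over a common language $\mathcal{L}$, each $A_\alpha \in V_\delta$. The idea is to absorb each entire structure into a single set. Choose a strictly increasing $f: \delta \to \delta$ with $\alpha < f(\alpha)$ for all $\alpha$ and with $f(\alpha)$ large enough that $A_\alpha \in V_{f(\alpha)}$ and that the Tarski satisfaction relation for the set-structure $A_\alpha$ is computed inside $V_{f(\alpha)}$; this is possible because $\delta$ is inaccessible, so each relevant stage lies below $\delta$. Let $R_\alpha \of V_{f(\alpha)}$ be the unary relation with extension $\singleton{A_\alpha}$, and put $M_\alpha = (V_{f(\alpha)}, \in, \singleton{\alpha}, R_\alpha)$. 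Then $\< M_\alpha \st \alpha < \delta>$ is a natural $\delta$-sequence, so the hypothesis yields an elementary embedding $j: M_\alpha \to M_\beta$. The named constant forces $j(\alpha) = \beta$, whence $\alpha < \beta$ and $j$ has a critical point, and since $R$ picks out a unique element in each structure, elementarity gives $j(A_\alpha) = A_\beta$.

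It then remains to recover from $j$ an elementary embedding of the original $\mathcal{L}$-structures, and this is exactly the point where the concern about large languages must be addressed; it is the main obstacle. The resolution I would use is that, once the whole structure is internalized as the single set $A_\alpha$, no symbol of $\mathcal{L}$ needs to be preserved externally: for each $\mathcal{L}$-formula $\varphi(\bar x)$ the G\"odel code $\corners{\varphi}$ is a natural number, hence fixed by $j$, while satisfaction is $\in$-definable with the structure as a parameter. Thus for any tuple $\bar a$ from the domain of $A_\alpha$, applying the elementarity of $j$ to the $\in$-formula asserting $\mathrm{Sat}(A_\alpha, \corners{\varphi}, \bar a)$ and using $j(A_\alpha) = A_\beta$ together with $j(\corners{\varphi}) = \corners{\varphi}$ yields $A_\alpha \satisfies \varphi[\bar a]$ if and only if $A_\beta \satisfies \varphi[j(\bar a)]$. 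Since membership in the domain of $A_\alpha$ is likewise $\in$-definable from the parameter $A_\alpha$, the map $j$ sends the domain of $A_\alpha$ into that of $A_\beta$, and the displayed equivalence says precisely that $j$ restricted to the domain of $A_\alpha$ is an elementary embedding of the $\mathcal{L}$-structure $A_\alpha$ into $A_\beta$. This supplies the required embedding for the arbitrary sequence $\< A_\alpha \st \alpha < \delta>$, so $\delta$ is \Vopenka. The crucial feature making the argument work regardless of the size of $\mathcal{L}$ is that internalizing each structure reduces the data that must be preserved to G\"odel codes of formulas, which are automatically fixed because they are natural numbers, together with the single parameter $A_\alpha$, which is mapped correctly to $A_\beta$ by construction; no individual symbol of $\mathcal{L}$ ever has to lie below the critical point of $j$.
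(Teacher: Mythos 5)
Your forward direction is fine, and the internalization device (pointing at $A_\alpha$ with a singleton predicate so that elementarity forces $j(A_\alpha) = A_\beta$) is a reasonable start, but the backward direction has a genuine gap at exactly the point this proposition exists to address. Your key step asserts that for each $\mathcal{L}$-formula $\varphi$ the code $\corners{\varphi}$ is a natural number and hence fixed by $j$. That is true only when $\mathcal{L}$ is countable --- which is the easy case, dismissed in the paper as immediately clear. For an uncountable language (and a structure in $V_\delta$ may carry a language of any size below $\delta$), the symbols must be coded by sets of possibly large rank, and a formula's code is a finite string over that large alphabet, not an element of $V_\omega$. What your argument actually yields is this: since $j(A_\alpha) = A_\beta$ and both are $\mathcal{L}$-structures, $j$ maps the set of symbol codes $\mathcal{L}$ into itself \emph{setwise}, but nothing makes it fix symbols \emph{pointwise}; a symbol coded by an ordinal at or above $\crit(j)$ gets moved to a different symbol. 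Pushing $\mathrm{Sat}(A_\alpha, \corners{\varphi}, \bar a)$ through $j$ then gives only the twisted equivalence $A_\alpha \satisfies \varphi[\bar a]$ iff $A_\beta \satisfies \varphi^j[j(\bar a)]$, where $\varphi^j$ renames each symbol $s$ occurring in $\varphi$ to $j(s)$. That is an elementary embedding of $A_\alpha$ into a renamed reduct of $A_\beta$, not into $A_\beta$ as an $\mathcal{L}$-structure, so the \Vopenka\ property has not been verified. Your closing claim that ``no individual symbol of $\mathcal{L}$ ever has to lie below the critical point'' is precisely what fails: the symbols are part of the set $A_\alpha$, and the internal satisfaction predicate references them.

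The missing idea, which is how the paper's proof proceeds, is to gain control over the critical point rather than trying to make the decoding critical-point-free. Assuming the natural-sequence hypothesis, the \Vopenka\ filter of definition \ref{definition.VopenkaFilter} is a filter on $\delta$ containing every tail (to force the critical point above a fixed $\gamma < \delta$, augment each structure in a natural sequence with predicates identifying the ordinals up to $\gamma$). Hence, after encoding the given sequence $\< A_\alpha \st \alpha < \delta>$ as a natural sequence, one may demand an embedding whose critical point exceeds the cardinality of $\mathcal{L}$; arranging the coding so that all symbols are coded by sets of rank below $|\mathcal{L}|^+$, such a $j$ fixes every symbol code, the twist $\varphi \mapsto \varphi^j$ becomes the identity, and your own decoding argument then goes through verbatim. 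So your proposal is repairable, but only by adding the tail-of-the-filter argument that constitutes the actual content of the paper's proof.
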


\begin{proof}
The forwards direction is immediate. For the converse, 
 the essential observation is that the \Vopenka\ filter on $\delta$ contains 
 every tail, so that after encoding a sequence of model-theoretic structures as 
 a natural sequence, it is possible to choose an embedding with critical point 
 much larger than the size of the language of the original structures, so that 
 the embedding does not interfere with the encoding. 
 \qed
\end{proof}

Kanamori suggests the equivalence of a 
Woodin-for-supercompactness cardinal to a \Vopenka\ cardinal in
\cite[p.364]{Kanamori:TheHigherInfinite2ed}. However, 
he does not formally define a Woodin-for-supercompactness cardinal, nor does he 
work out the details of the equivalence.

\begin{theorem} \label{theorem.WSC=Vopenka}
The cardinal $\delta$ is Woodin for supercompactness if and only if $\delta$ is 
a \Vopenka\ cardinal. Furthermore, if $\delta$ is a \Vopenka\ cardinal, then 
for 
every set 
$A 
\of 
V_\delta$, the set $\set{\kappa < \delta \st \kappa \text{ is $(<\!\!\delta, 
A)$-supercompact}}$ is a member of the \Vopenka\ filter on $\delta$.
\end{theorem}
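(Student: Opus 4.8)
The plan is to route both directions through the characterization of Woodin-for-supercompactness cardinals in theorem \ref{theorem.WSCcharacterization}, so that I never argue from the bare definition but instead from the statement that for every $A \of V_\delta$ the set $S_A = \set{\kappa < \delta \st \kappa \text{ is } (<\!\!\delta,A)\text{-supercompact}}$ is stationary (item \ref{hypothesis.ManyASC}) or at least nonempty (item \ref{hypothesis.OneASC}). By proposition \ref{proposition.NaturalSequence} I may also restrict attention to natural $\delta$-sequences throughout. The argument then splits into three pieces: (1) a Woodin-for-supercompactness cardinal is \Vopenka; (2) if $\delta$ is \Vopenka\ then each $S_A$ lies in the \Vopenka\ filter, which is the ``furthermore'' clause; and (3) the reverse implication, which falls out of (2), since the \Vopenka\ filter is proper, so that $S_A$ is nonempty and item \ref{hypothesis.OneASC} applies.

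For (1), I would take a natural $\delta$-sequence $\< M_\alpha \st \alpha < \delta>$, code it by a single set $A \of V_\delta$, and use item \ref{hypothesis.ManyASC} to obtain a $(<\!\!\delta,A)$-supercompact cardinal $\kappa$. Fix an embedding $j : V \to M$ witnessing $(\gamma,A)$-supercompactness for some $\gamma < \delta$ chosen above $f(\kappa)$ and large enough that $j \restrict M_\kappa \in M$. The point is that $j \restrict M_\kappa$ is an elementary embedding of $M_\kappa$ into $j(M_\kappa) = (j(\vec M))_{j(\kappa)}$, while the agreement $A \cap V_\gamma = j(A) \cap V_\gamma$ forces $(j(\vec M))_\kappa = M_\kappa$. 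Hence $M$ satisfies that there is an elementary embedding between the members of $j(\vec M)$ indexed by $\kappa < j(\kappa) < j(\delta)$; pulling this back through $j$ shows that $V$ satisfies that some $M_\alpha$ elementarily embeds into some $M_\beta$ with $\alpha < \beta < \delta$, which is exactly \Vopenka's principle for $\vec M$.

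The heart of the proof is (2). Given $A \of V_\delta$, for each $\kappa$ that is not $(<\!\!\delta,A)$-supercompact fix a witness $\gamma_\kappa < \delta$ to the failure of $(\gamma_\kappa,A)$-supercompactness, and set $g(\alpha) = \sup\set{\gamma_\kappa + 1 \st \kappa \leq \alpha,\ \kappa \text{ not } (<\!\!\delta,A)\text{-supercompact}}$, which is below $\delta$ because $\delta$ is regular. I would then build a natural sequence with $M_\alpha = (V_{f(\alpha)}, \in, \singleton{\alpha}, A \cap V_{f(\alpha)})$, choosing the rank function $f$ monotone with each $f(\alpha)$ a strong limit well above $g(\alpha)$, so that $P(P_\kappa \lambda) \of V_{f(\alpha)}$ whenever $\lambda \leq \gamma_\kappa$ and $\kappa \leq \alpha$. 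Now suppose $j : M_\alpha \to M_\beta$ is elementary with critical point $\kappa$; the constant $\singleton{\alpha}$ forces $j(\alpha) = \beta$, so $\kappa \leq \alpha$. If $\kappa$ were not $(<\!\!\delta,A)$-supercompact, then, taking $\lambda = \gamma_\kappa$, the seed $j \image \lambda$ yields a genuine normal fine measure on $P_\kappa \lambda$ in $V$ (the elementary map $j$ acts on all of $P(P_\kappa\lambda) \of V_{f(\alpha)}$), whose ultrapower embedding would witness $(\gamma_\kappa, A)$-supercompactness of $\kappa$, contradicting the choice of $\gamma_\kappa$. Therefore every such $j$ has critical point in $S_A$, and by definition \ref{definition.VopenkaFilter} the set $S_A$ belongs to the \Vopenka\ filter.

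The step I expect to be most delicate is verifying in (2) that the measure extracted from $j \image \lambda$ really delivers a full $(\gamma_\kappa, A)$-supercompactness embedding: one must check that the derived ultrapower is $\gamma_\kappa$-closed, that $\gamma_\kappa < j_\lambda(\kappa)$, and above all that the $A$-agreement $A \cap V_{\gamma_\kappa} = j_\lambda(A) \cap V_{\gamma_\kappa}$ survives passage to the factor embedding --- this is where the predicate $A \cap V_{f(\alpha)}$ in the structure, together with a factor-embedding computation of the kind in lemma \ref{lemma.FactorEmbeddingEquality}, does the work. A secondary bookkeeping obstacle is arranging the rank function $f$ to dominate $g$ while keeping $f(\alpha) < \delta$, which relies on the regularity (indeed inaccessibility) of the \Vopenka\ cardinal $\delta$.
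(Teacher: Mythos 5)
Your part (1) and your part (3) are essentially the paper's own argument: the paper also codes the natural sequence, gets a $(<\!\!\delta,A)$-supercompact $\kappa$ from theorem \ref{theorem.WSCcharacterization}, chooses $\gamma$ large enough that $j\restrict A_\kappa \in N$ and $j(A)_\kappa = A_\kappa$, and pulls the embedding $j\restrict A_\kappa : j(A)_\kappa \to j(A)_{j(\kappa)}$ back through $j$; and it likewise derives Woodin-for-supercompactness from the ``furthermore'' clause. Where you genuinely depart is part (2). The paper routes through the club $C$ of closure points of the failure function, invokes the nontrivial fact that the \Vopenka\ filter contains every club to get a natural sequence whose embeddings have critical points in $C$, and then builds a second, augmented sequence $N_\alpha = (V_{\gamma_\alpha},\in,\singleton{\alpha},M_\alpha, C\cap\gamma_\alpha, A\cap V_{\gamma_\alpha})$ with $\gamma_\alpha$ inaccessible in $C$. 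You instead note that the constant $\singleton\alpha$ forces $\crit(j)\le\alpha$ and build the domination of the failure witnesses directly into the rank function $f$. That is a real simplification: it uses only the definition of the \Vopenka\ filter and the regularity of $\delta$, not its club-containment property. One caveat: the inference $\crit(j)\le\alpha$ needs $j(\alpha)=\beta\neq\alpha$, so your sequence controls only embeddings between distinct members; the paper's argument also covers nontrivial self-embeddings $N_\alpha\to N_\alpha$ (it needs only $\crit(j)<\gamma_\alpha$), which matters if definition \ref{definition.VopenkaFilter} is read as constraining those too.

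The step you flag as most delicate is, however, a genuine gap as you have calibrated it. You seed the measure with $j\image\gamma_\kappa$, so the ultrapower $N$ is closed under $\gamma_\kappa$-sequences; but the agreement clause $A\cap V_{\gamma_\kappa} = j_U(A)\cap V_{\gamma_\kappa}$ forces in particular $A\cap V_{\gamma_\kappa}\of N$, and $\gamma_\kappa$-closure yields only $H_{\gamma_\kappa^+}\of N$, which does not contain $V_{\gamma_\kappa}$ unless $\gamma_\kappa$ is a $\beth$ fixed point (e.g.\ it fails for $\gamma_\kappa=\kappa^+$ when $2^\kappa>\kappa^+$): elements of $A\cap V_{\gamma_\kappa}$ of hereditary cardinality above $\gamma_\kappa$ may simply be absent from $N$, and then no factor-embedding computation can establish the equality. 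Your rank requirement $P(P_\kappa\lambda)\of V_{f(\alpha)}$ for $\lambda\le\gamma_\kappa$ is tuned to exactly this too-small seed. The repair is cheap inside your framework: define $g(\alpha)=\sup\set{\beth_{\gamma_\kappa}+1 \st \kappa\le\alpha,\ \kappa\text{ not }(<\!\!\delta,A)\text{-supercompact}}$ and derive the measure from the seed $j\image\beth_{\gamma_\kappa}$ instead. Then $V_{\gamma_\kappa}\in H_{\lambda^+}\of N$ for $\lambda=\beth_{\gamma_\kappa}$, the factor map $k([h])=j(h)(j\image\lambda)$ has $\crit(k)>\lambda$ by the usual order-type computation and fixes $(H_{\lambda^+})^N$ pointwise, and the predicate $A\cap V_{f(\alpha)}$ transfers membership exactly as you describe, completing the contradiction with the choice of $\gamma_\kappa$. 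For comparison, the paper seeds at exactly $g(\kappa)$ as well, and covers this same difficulty by taking the $\gamma_\alpha$ inaccessible and asserting that $\crit(k)$ is an inaccessible cardinal above $g(\kappa)$; enlarging the seed is the more robust move, and it also makes your ``strong limit'' choice of $f(\alpha)$ (rather than the paper's inaccessible) sufficient.
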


\begin{proof}
For the forward direction, let $\delta$ be Woodin for
supercompactness, and let $A = \langle A_\alpha \st \alpha < \delta \rangle$ be 
a 
natural $\delta$-sequence. I will show that for some ordinals
$\alpha <
\beta < \delta$, there exists an elementary embedding 
$j:
A_\alpha \to A_\beta.$ 

Since $\delta$ is Woodin for 
supercompactness, there is a cardinal $\kappa < \delta$ such that $\kappa$ is 
$(<\!\!\delta, 
A)$-supercompact. Therefore, by choosing a large enough degree of 
$A$-supercompactness, there is an elementary embedding $j: V \to N$ such that 
$j(A)_\kappa = A_\kappa$ and $j \restrict A_\kappa  \in N$. In $N$, the map $j 
\restrict A_\kappa$ is an elementary embedding from $j(A)_\kappa$ to 
$j(A)_{j(\kappa)}$. So in $N$, there exists an elementary embedding between two 
elements of $j(A)$. By the elementarity of $j$, there exists an elementary 
embedding between two elements of $A$ in $V$. It follows that $\delta$ is 
\Vopenka.

The proof of the converse direction uses some of the same ideas as the proof of 
proposition 24.14 of 
\cite{Kanamori:TheHigherInfinite2ed}, which shows that 
$V_\delta$ contains many extendible cardinals if $\delta$ is \Vopenka. Suppose 
that the 
cardinal $\delta$ is \Vopenka, and let $A \of V_\delta$. I will 
show that there exists a cardinal $\kappa < \delta$ such that $\kappa$ is 
$(<\!\!\delta, 
A)$-supercompact, thereby 
showing that the cardinal $\delta$ is Woodin for supercompactness. Indeed, I 
will show that the set 
of such $\kappa$ is an element of the \Vopenka\ filter on $\delta$.

Let $g:\delta \to \delta$ be the variation of the 
failure-of-$A$-supercompactness function described as follows. Given $\xi< 
\delta$, let 
$g(\xi)$ be the least 
cardinal $\eta > \xi$ such that $\xi$ is not $(\eta, A)$-supercompact. 
In case no such $\eta$ exists, then set $g(\xi) = \xi$.

Let $C \of \delta$ be the club of closure points of $g$, i.e. $C = 
\set{\rho < \delta \st g \image \rho \of \rho}$. Since the \Vopenka\ filter on 
$\delta$ contains every club, it follows that the club $C$ is a member of this 
filer. 
Therefore, there exists a natural 
$\delta$-sequence  $\<M_\alpha \st \alpha < \delta>$ such that 
whenever $j: M_\alpha 
\to M_\beta$ is an elementary embedding, the critical point of $j$ is an 
element 
of $C$.

For each ordinal $\alpha < \delta$, let $\gamma_\alpha$ be the least 
inaccessible element 
of $C$ above 
all the ordinals of $M_\alpha$, and for each ordinal $\alpha < \delta$, let 
$$N_\alpha = ( V_{\gamma_\alpha}, \in, \singleton{\alpha}, M_\alpha, C \cap 
\gamma_\alpha, A \cap 
V_{\gamma_\alpha}).$$ Let $j: 
N_\alpha \to N_\beta$ be an elementary embedding. It suffices to show that the 
critical point, $\kappa$, of $j$ is $(<\!\!\delta, A)$-supercompact.

Assume to the contrary that $\kappa$ is not $(<\!\!\delta, A)$-supercompact. 
Then $\kappa< g(\kappa)$. Furthermore, 
$g(\kappa) < \gamma_\alpha$, because $\gamma_\alpha \in C$.
Since $M_\alpha$ is encoded in $N_\alpha$, it follows from the definition of 
the 
sequence $\<M_\alpha>$ that $\kappa \in C$. By the elementarity of $j$, it 
follows that $j(\kappa) \in C$ as well. 

Let $U$ be the normal fine measure on $P_\kappa \big(g(\kappa)\big)$ induced
via $j$ 
by the seed $j \image g(\kappa)$, and let $j_U: V \to N$ be the ultrapower 
generated by $U$.
Using the fact that $\gamma_\alpha$ is inaccessible, the theory of factor 
embeddings shows that there 
exists an elementary embedding $k$ so 
that the following diagram commutes. 
\begin{diagram}[w=5em]
V_{\gamma_\alpha} & \rTo^j & V_{j(\gamma_\alpha)} \\
\dTo^{j_U \restrict V_{\gamma_\alpha}} & \ruTo_k & \\
N \cap V_{\gamma_\alpha} & &\\
\end{diagram}
\figlist{Factor embeddings of a \Vopenka\ embedding}

I claim that 
the map $j_U : V \to N$ witnesses that $\kappa$ is $(
g(\kappa), A)$-supercompact. This will contradict the definition of $g$, 
thereby 
completing the proof. Clearly, this map is a $g(\kappa)$-supercompactness 
embedding with critical point $\kappa$, and so it suffices to show that $j_U(A) 
\cap V_{g(\kappa)} = A \cap V_{g(\kappa)}.$

First of all, since $A \cap V_{\gamma_\alpha}$ is encoded in $N_\alpha$, it 
follows 
that $j(A) \cap V_{g(\kappa)} = A \cap V_{g(\kappa)}$. Since the critical point 
of $k$ is an inaccessible cardinal above $g(\kappa)$, it follows that $j(A) 
\cap 
V_{g(\kappa)} = j_U(A) \cap 
V_{g(\kappa)}$.
\qed
\end{proof}

\section{There are no excessively hypercompact cardinals.} 
\label{section.ExcessivelyHC}

In 
definition 1.2 of 
\cite{Apter2011:SomeApplicationsMethod(Hypercompact)},
Apter defined an excessively hypercompact cardinal as follows.\footnote{At 
that 
time, Apter called these cardinals hypercompact rather than excessively 
hypercompact. 
But in light of theorem \ref{theorem.StrongHypercompactDNE}, we now call them 
excessively hypercompact.}

\begin{definition}
[Apter, \cite{Apter2011:SomeApplicationsMethod(Hypercompact)}]  
\label{definition.StronglyHC}
A cardinal $\kappa$ is excessively 
$0$-hypercompact iff $\kappa$ is
supercompact. For $\alpha>0$, a cardinal  $\kappa$ is excessively 
$\alpha$-hypercompact iff 
for 
any 
cardinal $\delta \geq 
\kappa$, 
there is an elementary embedding 
$j: V \to M$ witnessing the $\delta$-supercompactness of $\kappa$ (i.e.\ cp$(j) 
= 
\kappa, j(\kappa) > \delta,$ and $M^\delta \of M$) generated by a supercompact 
ultrafilter over $P_\kappa(\delta)$ such that $M \satisfies$ ``$\kappa$  is 
excessively
 $\beta$-hypercompact for every $\beta<\alpha$''. A cardinal $\kappa$ is 
\textbf{ 
excessively hypercompact} iff $\kappa$ is excessively $\alpha$-hypercompact for 
every 
ordinal 
$\alpha$.
\end{definition}

Postulating the existence of an excessively hypercompact cardinal leads to a 
contradiction.

\begin{theorem} \label{theorem.StrongHypercompactDNE}
There are no excessively hypercompact cardinals. In particular, there 
is no cardinal $\kappa$ such that $\kappa$ is excessively 
$(2^\kappa)^+$-hypercompact.
\end{theorem}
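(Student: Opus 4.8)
The plan is to prove the sharper ``in particular'' clause---that no cardinal $\kappa$ is excessively $(2^\kappa)^+$-hypercompact---by induction on $\kappa$, since the non-existence of excessively hypercompact cardinals follows immediately from it. First I would record two structural facts. An excessively $\alpha$-hypercompact cardinal is excessively $\beta$-hypercompact for every $\beta<\alpha$, because the embeddings witnessing level $\alpha$ already witness every lower level; and an excessively $1$-hypercompact cardinal is supercompact, hence inaccessible and a strong limit. This lets me define the \emph{hypercompactness order} $o(\kappa)$ to be the least ordinal $\alpha$ for which $\kappa$ fails to be excessively $\alpha$-hypercompact, and reduces the goal to the inequality $o(\kappa)\le (2^\kappa)^+$.

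So suppose toward a contradiction that $\kappa$ is a least counterexample, i.e.\ the least cardinal that is excessively $(2^\kappa)^+$-hypercompact. By the inductive hypothesis every cardinal $\mu<\kappa$ has $o(\mu)\le (2^\mu)^+<\kappa$, the last inequality using that $\kappa$ is inaccessible. Now instantiate the definition of excessive $(2^\kappa)^+$-hypercompactness at the degree $\delta=\kappa$: this produces a normal measure $U$ on $\kappa$ (the witnessing ultrafilter on $P_\kappa\kappa$ at degree $\kappa$ may be taken to be a normal measure) whose ultrapower $j\colon V\to M=\mathrm{Ult}(V,U)$ satisfies $M\models$ ``$\kappa$ is excessively $\beta$-hypercompact for every $\beta<(2^\kappa)^+$'', so $o^M(\kappa)\ge (2^\kappa)^+$. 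On the other hand, let $F=o\restriction\kappa$. By the inductive hypothesis $F$ maps $\kappa$ into $\kappa$, so $[F]_U<[\,\xi\mapsto\kappa\,]_U=j(\kappa)$; and since $o$ is first-order definable, elementarity gives $j(F)=o^M\restriction j(\kappa)$, whence by \L o\'s's theorem $o^M(\kappa)=j(F)(\kappa)=[F]_U$. Finally, every ordinal below $j(\kappa)$ is represented by a function $\kappa\to\kappa$, of which there are only $2^\kappa$, so $j(\kappa)<(2^\kappa)^+$. Chaining these bounds yields $(2^\kappa)^+\le o^M(\kappa)=[F]_U<j(\kappa)<(2^\kappa)^+$, a contradiction. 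I would stress that every quantity here---$(2^\kappa)^+$ and $j(\kappa)$ alike---is measured in $V$, so I never need $M$ and $V$ to agree on the cardinal $2^\kappa$.

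The computational core is routine once one grants that $o$ is a genuine definable class function, so that $j(F)=o^M\restriction j(\kappa)$ holds by elementarity; this is exactly the step that needs care, and I expect it to be the main obstacle. The predicate ``$\kappa$ is excessively $\alpha$-hypercompact'' is defined by recursion on $\alpha$ and, on its face, quantifies over proper-class embeddings and over the satisfaction of the predicate itself inside ultrapowers. I would resolve this by observing that each witnessing embedding is generated by a set-sized normal fine measure on some $P_\kappa\delta$, so that ``there is an embedding$\dots$'' is really the set quantifier ``there is a measure$\dots$'', and by invoking the recursion theorem to obtain a single formula $\Phi(\kappa,\alpha)$ satisfying the defining equivalence, in which the clause ``$\mathrm{Ult}(V,U)\models\Phi(\kappa,\beta)$'' is the relativization of this fixed formula to the definable inner model $\mathrm{Ult}(V,U)$, evaluated via \L o\'s's theorem. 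With $\Phi$ in hand, $o$ is definable and commutes with $j$ as the argument requires. I would also note that this proof is essentially self-contained, depending only on standard ultrapower facts---\L o\'s's theorem and the bound $j(\kappa)<(2^\kappa)^+$ for the ultrapower by a measure on $\kappa$---rather than on the high-jump machinery developed earlier in the paper.
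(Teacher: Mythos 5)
Your proof is correct and follows essentially the same route as the paper's: both take the least cardinal $\kappa$ that is excessively $(2^\kappa)^+$-hypercompact, instantiate the definition at degree $\delta=\kappa$ to obtain a normal-measure ultrapower $j\colon V\to M$ in which $\kappa$ is excessively $\beta$-hypercompact for every $\beta<\big((2^\kappa)^+\big)^V$, and exploit the bound $j(\kappa)<\big((2^\kappa)^+\big)^V$. The only difference is in how minimality is cashed in --- the paper reflects the statement that some $\gamma$ is excessively $(2^\gamma)^+$-hypercompact below $\kappa$, whereas you use minimality up front to define $F=o\restriction\kappa$ and derive the contradiction from the {\L}o\'s computation $o^M(\kappa)=[F]_U<j(\kappa)$ --- and both versions rely on the same first-order definability of the hierarchy that the paper notes.
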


\begin{proof}
Suppose towards a contradiction that $\kappa$ is least such that $\kappa$ is 
excessively 
$(2^\kappa)^+$-hypercompact.  Apply the 
definition of excessive hypercompactness in the case  $\delta=\kappa$ to obtain 
an elementary embedding $j:V \to M$ which is witnessed by a normal fine measure 
on $P_\kappa \kappa$ (which is isomorphic to a normal measure on $\kappa$)
 such 
that $\kappa$ is excessively $\alpha$-hypercompact in $M$ for 
all $\alpha < \big((2^\kappa)^+\big)^V$. This includes all $\alpha<j(\kappa),$ 
since $j(\kappa)$ 
has cardinality $2^\kappa$ in $V$.  In particular, it includes the case of 
$\alpha = \big((2^\kappa)^+\big)^M$ , since this is less than $j(\kappa).$ By 
reflection, 
there are many $\gamma < \kappa$ such that $\gamma$ is 
excessively $(2^\gamma)^+$-hypercompact, and this contradicts the minimality of 
$\kappa.$
\qed
\end{proof}

In definition \ref{definition.hypercompact}, I describe a hypercompact 
cardinal. 
Apter had erroneously believed that this definition was equivalent to the 
definition 
of an excessively hypercompact cardinal.\footnote{personal communication with 
Apter, 2012} However, the existence 
of a hypercompact cardinal is strictly weaker in consistency strength than the 
existence of a Woodin-for-supercompactness cardinal; I prove this fact in 
theorem \ref{theorem.WSC>WHC}.
The proofs in 
\cite{Apter2011:SomeApplicationsMethod(Hypercompact)} all work using 
hypercompact cardinals in place of excessively hypercompact cardinals, so the 
error in the definition given in 
that paper did not have 
severe consequences.\footnote{personal communication with Apter, 2012.} 

\begin{definition} \label{definition.hypercompact}
The hypercompact cardinals are defined recursively as follows. Given any 
ordinal 
$\alpha$, the cardinal $\kappa$ is $\alpha$-hypercompact if and only if for 
every ordinal $\beta < \alpha$ and for every cardinal 
$\lambda \geq \kappa$, there exists a cardinal $\lambda' \geq \lambda$
and there exists an elementary embedding $j: V \to M$ generated by a normal 
fine 
measure on $P_\kappa \lambda'$ such that the 
cardinal $\kappa$ is
 $\beta$-hypercompact in $M$.  (In particular, every cardinal is 
 $0$-hypercompact, and $1$-hypercompact is equivalent to supercompact.) The 
 cardinal
$\kappa$ is \textbf{hypercompact} if and only if it is 
$\beta$-hypercompact for every ordinal $\beta$.
\end{definition}

The key difference between the definitions of hypercompact and excessively 
hypercompact is that in the definition of hypercompact, the embedding $j$ need 
not be witnessed by a normal fine measure on $P_\kappa \lambda$, but can 
be witnessed instead by a larger supercompactness measure.\footnote{An 
additional minor difference is that the definition of hypercompact 
handles limit stages differently from the definition of excessively 
hypercompact. I 
made this change in order to unify the definition for the successor and limit 
stages, and also to define hypercompact cardinals analogously to the Mitchell 
order.}

Note that both the hypercompact cardinals and the excessively hypercompact 
cardinals are first-order definable in \ZFC. Formally, the definition of a 
hypercompact cardinal is by 
recursion on $\kappa$ as follows. Assuming recursively that the set 
$$\text{HC}_{<\kappa} := \set{(\alpha, \eta) \st \eta \text{ is } 
\alpha\text{-hypercompact and } \eta < \kappa}$$ is already 
defined, define that
$\kappa$ is $\alpha$-hypercompact if and only if for every $\beta < \alpha$ and 
for every $\lambda \geq \kappa$ there exists $\lambda' \geq \lambda$ and there 
exists an elementary embedding $j: V \to M$ generated by a normal fine measure 
on $P_\kappa \lambda'$ such that $(\beta, \kappa) \in j(H_{<\kappa})$. This in 
turn can be stated formally as a first-order proposition using the Lo\'{s}
theorem, without referring explicitly to the embedding $j$. 

I now establish the consistency of a hypercompact cardinal relative to a 
Woodin-for-supercompactness cardinal. The bold part of the proof emphasizes why 
the 
proof would not work to establish the consistency of an excessively 
hypercompact 
cardinal.

\begin{theorem}\label{theorem.WSC>WHC}
If the cardinal $\delta$ is Woodin for 
supercompactness, then in the model $V_\delta$, there is a 
proper class of hypercompact cardinals.
\end{theorem}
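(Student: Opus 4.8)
The plan is to locate, inside $V_\delta$, an unbounded (hence proper) class of hypercompact cardinals by exhibiting them among the $(<\!\!\delta, A)$-supercompact cardinals for a well-chosen coding set $A$. First I would record two preliminary facts. Since a Woodin-for-supercompactness cardinal is inaccessible (indeed Mahlo), $V_\delta \models \ZFC$; and since $\delta$ is inaccessible and $\gamma < \delta$, every normal fine measure $U$ on some $P_\kappa\gamma$ lies in $V_\delta$. The key absoluteness fact I will establish is that for such a $U$ one has $\mathrm{Ult}(V_\delta, U) = V_{j_U(\delta)}^{N}$, where $j_U : V \to N$ is the $V$-ultrapower: by regularity of $\delta$ every function $P_\kappa\gamma \to V_\delta$ has bounded range and so already lies in $V_\delta$, whence $\mathrm{Ult}(V_\delta, U)$ is canonically isomorphic to $V_{j_U(\delta)}^{N}$ and $j_U^{V_\delta} = j_U \restrict V_\delta$ under this identification.

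Next I would fix the coding set $A = \{\langle\beta,\gamma\rangle : \beta,\gamma<\delta \text{ and } \gamma \text{ is } \beta\text{-hypercompact in } V_\delta\} \of V_\delta$, which is a legitimate set since $\beta$-hypercompactness is first-order definable. By Theorem \ref{theorem.WSCcharacterization}, the set $S$ of cardinals $\kappa<\delta$ that are $(<\!\!\delta, A)$-supercompact is stationary, hence unbounded, in $\delta$. The whole theorem then reduces to the claim that \emph{every} $\kappa \in S$ is hypercompact in $V_\delta$, which I would prove by induction on $\alpha<\delta$: every $\kappa\in S$ is $\alpha$-hypercompact in $V_\delta$. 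The base case $\alpha=0$ is trivial.

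For the inductive step, fix $\kappa\in S$, an ordinal $\beta<\alpha$, and a cardinal $\lambda\in[\kappa,\delta)$; I must produce a witness to $\beta$-hypercompactness at level $\lambda$. Choosing a cardinal $\gamma<\delta$ with $\gamma\geq\lambda$ and $\gamma>\beta$ (so that $\langle\beta,\kappa\rangle\in V_\gamma$), I use $(\gamma, A)$-supercompactness of $\kappa$ to get $j:V\to M$ with critical point $\kappa$, $M^\gamma\of M$, and $A\cap V_\gamma = j(A)\cap V_\gamma$. Let $U$ be the normal fine measure on $P_\kappa\gamma$ induced via $j$ by the seed $j\image\gamma$, with factor embedding $j_U:V\to N$ and $k:N\to M$ satisfying $k\circ j_U = j$ and $\crit(k)>\gamma$. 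Since $k$ fixes $V_\gamma$ pointwise, $j(A)\cap V_\gamma = j_U(A)\cap V_\gamma$, so $A$ and $j_U(A)$ agree on $V_\gamma$, giving $\langle\beta,\kappa\rangle\in A \iff \langle\beta,\kappa\rangle\in j_U(A)$. Unwinding the coding via the elementarity of $j_U$ and the identification $\mathrm{Ult}(V_\delta,U)=V_{j_U(\delta)}^{N}$ from the first paragraph, this says precisely that $\kappa$ is $\beta$-hypercompact in $V_\delta$ if and only if $\kappa$ is $\beta$-hypercompact in $\mathrm{Ult}(V_\delta,U)$. By the induction hypothesis ($\beta<\alpha$, $\kappa\in S$), $\kappa$ is $\beta$-hypercompact in $V_\delta$, hence in $\mathrm{Ult}(V_\delta,U)$, and the $V_\delta$-ultrapower $j_U^{V_\delta}$ by the normal fine measure $U$ on $P_\kappa\gamma$ is the required witness with $\lambda'=\gamma\geq\lambda$. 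This completes the induction, and since $S$ is unbounded in $\delta$, $V_\delta$ has a proper class of hypercompact cardinals.

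The main obstacle, and the point where the argument succeeds for hypercompactness but must fail for excessive hypercompactness, is the need to take the witnessing measure on $P_\kappa\gamma$ with $\gamma$ possibly \emph{strictly larger} than $\lambda$. \textbf{I need $\gamma>\beta$ to make $\langle\beta,\kappa\rangle$ visible inside $V_\gamma$ and thereby transfer the coding, whereas the agreement $A\cap V_\gamma=j(A)\cap V_\gamma$ coming from a mere $\lambda$-supercompactness measure would only reach $V_\lambda$ and could miss $\langle\beta,\kappa\rangle$ when $\beta\geq\lambda$; since the definition of hypercompactness permits a measure on $P_\kappa\lambda'$ for $\lambda'\geq\lambda$ but excessive hypercompactness insists on a measure over $P_\kappa\lambda$ itself, the same strategy cannot be carried out there.} Verifying the ultrapower identification and the coding transfer carefully is the technical heart of the proof.
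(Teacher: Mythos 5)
Your proof is correct, and it runs on the same engine as the paper's proof, but the organization is genuinely different, so a comparison is worthwhile. Both arguments apply theorem \ref{theorem.WSCcharacterization} to a subset of $V_\delta$ coding hypercompactness facts of $V_\delta$, both transfer those facts into the target of a factor ultrapower whose measure lies in $V_\delta$ via the agreement $A \cap V_\gamma = j(A) \cap V_\gamma$ at a sufficiently large $\gamma$, and both isolate exactly the same crux --- the freedom to witness level $\lambda$ by a measure on $P_\kappa\gamma$ with $\gamma$ possibly much larger than $\lambda$, which is precisely what breaks for excessive hypercompactness (your bolded sentence and the paper's bolded sentence make the identical point). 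The difference is structural: the paper codes only the failure-of-hypercompactness function $f$ (where $f(\xi)$ is the least $\beta$ such that $\xi$ is not $\beta$-hypercompact in $V_\delta$, and $0$ if none exists) and argues by contradiction from a putative bound $\eta$ on the hypercompact cardinals of $V_\delta$; there the single agreement $(\kappa, f(\kappa)) \in j_\gamma(f)$ says at once that the target thinks $\kappa$ is $\beta$-hypercompact for \emph{every} $\beta < f(\kappa)$, so the minimality built into $f$ replaces any induction. You instead code the full predicate $A = \set{\langle \beta, \gamma \rangle \st \gamma \text{ is } \beta\text{-hypercompact in } V_\delta}$ and prove directly, by induction on the degree $\alpha < \delta$, that every $(<\!\!\delta, A)$-supercompact cardinal is hypercompact in $V_\delta$. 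Your route costs the induction plus the explicit identification $\mathrm{Ult}(V_\delta, U) = V^N_{j_U(\delta)}$, which the paper uses only implicitly when it asserts that the factored embeddings ``witness in $V_\delta$'' the hypercompactness of $\kappa$; in exchange you get a direct proof and slightly more information, namely that the hypercompact cardinals of $V_\delta$ contain the stationary set of $(<\!\!\delta, A)$-supercompact cardinals rather than merely an unbounded class. Two small repairs: the parenthetical ``so that $\langle\beta,\gamma\rangle \in V_\gamma$'' should read $\langle\beta,\kappa\rangle \in V_\gamma$, and for this you should also choose $\gamma$ strictly above $\kappa$; and ``$k$ fixes $V_\gamma$ pointwise'' is loose, since a $\gamma$-supercompactness ultrapower $N$ need not contain $V_\gamma$ when $\gamma$ is not a $\beth$ fixed point --- what you actually use, and what is true, is that $k$ fixes every element of $N$ of rank below $\crit(k) > \gamma$, and the only objects you push through $k$ are pairs of ordinals below $\gamma$, so the argument is unaffected.
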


\begin{proof}
Suppose $\delta$ is Woodin for supercompactness. 
Suppose towards a contradiction that the hypercompact 
cardinals of $V_\delta$ are bounded above by some 
cardinal $\eta$. Let the function $f: \delta \to 
\delta$ be the failure-of-hypercompactness function as defined in the model 
$V_\delta$. That is to 
say, for an ordinal $\xi< \delta$, let $f(\xi)$ be the least ordinal $\beta$ 
such 
that $\xi$ is not $\beta$-hypercompact in $V_\delta$ 
if such a $\beta$ exists, and let $f(\xi) = 0$ otherwise.

By theorem \ref{theorem.WSCcharacterization}, there 
is a
$(<\delta, f)$-supercompact cardinal $\kappa$ above $\eta$, and this fact is 
witnessed 
by a collection of 
elementary embeddings
$j_\gamma : V \to M_\gamma$ for $\gamma < \delta$. (The subscripted 
$\gamma$ serves to index the target model, not to 
refer to a rank-initial cut thereof. ) \textbf{If 
$\gamma$ is taken to be sufficiently large,} then 
$(\kappa, f(\kappa)) \in j_\gamma(f)$, and so 
$j_\gamma(f)(\kappa) = f(\kappa)$. That is to say, in 
$M_\gamma$, the cardinal $\kappa$  is
$\beta$-hypercompact for every $\beta < f(\kappa)$. By 
taking a factor embedding if necessary, assume that 
$j_\gamma$ is generated by a normal fine measure $U$ 
on $P_\kappa \gamma$ such that $U \in V_\delta$. Thus 
in $V_\delta$, the collection of embeddings $\<j_\gamma>$ witness
that $\kappa$ is $f(\kappa)$-hypercompact, 
contradicting the definition of $f$. This 
contradiction completes the proof. 
\qed
\end{proof}

Finally, I consider the extent to which the hierarchy of 
$\beta$-hypercompactness 
and the hierarchy of
excessive 
$\beta$-hypercompactness coincide for particular small values of 
$\beta$.

\begin{theorem}\label{theorem.StrongVsWeakHypercompact}
Let $\kappa$ be a cardinal, and let $\beta \leq \kappa^+$ be an ordinal. If 
$\kappa$  is
$\beta$-hypercompact, then for every ordinal $\alpha < \beta$ and for every 
cardinal $\lambda 
\geq \kappa$, there is an elementary embedding $j: V \to M$ generated by a 
normal fine measure on $P_\kappa \lambda$ such that $\kappa$ is 
$\alpha$-hypercompact in $M$. Thus, the $\beta$-hypercompactness and excessive 
$\beta$-hypercompactness hierarchies align below $\kappa^+$.\footnote{Actually, 
this alignment is off by one, because the definitions of these hierarchies 
handle limit stages differently. But this fact is a technical detail not 
germane 
to the main idea. }
\end{theorem}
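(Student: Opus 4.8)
The plan is to begin with the defining property of $\beta$-hypercompactness and then cut the witnessing embedding down to a measure on $P_\kappa \lambda$ by a factor-embedding argument, using the hypothesis $\beta \leq \kappa^+$ precisely to guarantee that the factor map fixes the ordinal $\alpha$, so that $\alpha$-hypercompactness transfers by elementarity.

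First I would fix an ordinal $\alpha < \beta$ and a cardinal $\lambda \geq \kappa$. Applying the definition of $\beta$-hypercompactness (definition \ref{definition.hypercompact}) to $\alpha$, in the role of the ``$\beta$'' there, and to $\lambda$, I obtain a cardinal $\lambda' \geq \lambda$ and an elementary embedding $j': V \to M'$ generated by a normal fine measure on $P_\kappa \lambda'$ such that $\kappa$ is $\alpha$-hypercompact in $M'$. The only defect of $j'$ is that its generating measure lives on $P_\kappa \lambda'$ rather than on $P_\kappa \lambda$.

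Next I would pass to the $\lambda$-supercompactness factor embedding $j_0 : V \to M_0$ induced via $j'$ by the seed $j' \image \lambda$, so that $j_0$ is generated by a normal fine measure on $P_\kappa \lambda$ and there is a factor map $k: M_0 \to M'$ with $j' = k \circ j_0$ and $\crit(k) > \lambda$, exactly as in lemma \ref{lemma.HJFactor}. It then remains to show that $\kappa$ is $\alpha$-hypercompact in $M_0$. Here I would invoke the fact, established in the discussion following definition \ref{definition.hypercompact}, that ``$\kappa$ is $\alpha$-hypercompact'' is expressed by a first-order formula $\varphi(\kappa, \alpha)$ in the language of set theory. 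Since $M' \satisfies \varphi(\kappa, \alpha)$ and $k$ is elementary, the equivalence $M_0 \satisfies \varphi(\kappa, \alpha) \iff M' \satisfies \varphi(k(\kappa), k(\alpha))$ will deliver the conclusion as soon as $k(\kappa) = \kappa$ and $k(\alpha) = \alpha$.

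The crux of the argument, and the one place the hypothesis $\beta \leq \kappa^+$ enters, is verifying that $k$ fixes both $\kappa$ and $\alpha$. Because $\crit(k) > \lambda \geq \kappa$, the map $k$ fixes $\kappa$ and every ordinal $\leq \kappa$; and since $\alpha < \beta \leq \kappa^+$ forces $\alpha \leq \kappa < \crit(k)$, the map $k$ fixes $\alpha$ as well. Thus $\varphi(\kappa, \alpha)$ transfers from $M'$ to $M_0$, and $j_0 : V \to M_0$ is the desired embedding, generated by a normal fine measure on $P_\kappa \lambda$, witnessing that $\kappa$ is $\alpha$-hypercompact in $M_0$. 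The main obstacle lies entirely in this last step: without the bound $\beta \leq \kappa^+$, the ordinal $\alpha$ could exceed $\crit(k)$, so that $k$ would move $\alpha$ and the elementarity transfer would break down. This is exactly why the $\beta$-hypercompactness and excessive $\beta$-hypercompactness hierarchies need only align below $\kappa^+$; the residual ``off by one'' mentioned in the footnote is an artifact of the differing treatment of limit stages and does not affect this reasoning.
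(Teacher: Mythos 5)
Your overall strategy matches the paper's: extract an embedding $j': V \to M'$ from the definition of $\beta$-hypercompactness, pass to the $\lambda$-supercompactness factor embedding $j_0: V \to M_0$ with factor map $k: M_0 \to M'$, and transfer the first-order statement ``$\kappa$ is $\alpha$-hypercompact'' from $M'$ back to $M_0$ using the elementarity of $k$ together with $k(\kappa)=\kappa$ and $k(\alpha)=\alpha$. (The paper frames this as an induction on $\beta$, but the inductive hypothesis does no visible work; the factor-embedding step is the whole content.) However, your verification that $k$ fixes $\alpha$ contains a genuine error: you assert that $\alpha < \beta \leq \kappa^+$ forces $\alpha \leq \kappa$. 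This is false, since there are many ordinals strictly between $\kappa$ and $\kappa^+$ (for instance $\kappa+1$), and $\alpha$ may be any of them. Consequently the bound $\crit(k) > \lambda \geq \kappa$ that you import from lemma \ref{lemma.HJFactor} is not enough: in the crucial case $\lambda = \kappa$ it yields only $\crit(k) > \kappa$, and an ordinal $\alpha$ with $\kappa < \alpha < \kappa^+$ could then a priori be moved by $k$, breaking the elementarity transfer on which your whole argument rests.

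The missing step, which is exactly how the paper closes this gap, is to push the critical point of $k$ past $\kappa^+$. If $M_0 = M'$ there is nothing to prove, so assume $k$ is nontrivial. Then $\crit(k)$ is inaccessible in $M_0$. On the other hand, $M_0$ is closed under $\lambda$-sequences, hence under $\kappa$-sequences, so $M_0$ agrees with $V$ about $\kappa^+$; in particular $\kappa^+$ is a successor cardinal of $M_0$, and no ordinal strictly between $\kappa$ and $\kappa^+$ is even a cardinal of $M_0$. Since $\crit(k) > \kappa$ and $\crit(k)$ is an inaccessible cardinal of $M_0$, it follows that $\crit(k) > \kappa^+ > \alpha$, so $k$ does fix $\alpha$, and your transfer argument then goes through verbatim. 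Note that this repaired step is precisely where the hypothesis $\beta \leq \kappa^+$ is used in an essential way: the agreement between $M_0$ and $V$ reaches only up to $\kappa^+$, which is why the two hierarchies are claimed to align only below $\kappa^+$.
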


\begin{proof}
The proof is by induction on ordinals $\beta$.
Suppose that the cardinal $\kappa$ is $\beta$-hypercompact and that 
the theorem is true for all $\beta' < \beta$. Let $\lambda \geq \kappa$ be a 
cardinal, and let $\alpha < \beta$. It suffices to show that there is an 
elementary embedding $j: V \to M$ generated by a normal fine 
measure in $V$ on $P_\kappa \lambda$ such that in $M$, the 
cardinal $\kappa$ is $\alpha$-hypercompact.

By hypothesis, the cardinal $\kappa$ is $\beta$-hypercompact. So for some 
cardinal $\theta \geq \lambda$, there exists an elementary 
embedding $j: V \to M$ such that in $M$, the cardinal $\kappa$ 
is $\alpha$-hypercompact.

Let $j_\lambda : V \to M_\lambda$ be the 
$\lambda$-supercompactness factor embedding induced via $j$ by the seed $j 
\image \lambda$, and 
let $k: M_\lambda \to M$ be the elementary embedding such that 
$k \circ j_\lambda = j$, as in the following commutative 
diagram.  To be precise, the embedding $j_\lambda$ is the ultrapower generated 
by $U_\lambda$, where $U_\lambda$ is the normal fine measure on $P_\kappa 
\lambda$ given by $A \in U \iff j \image \lambda \in j(A)$. (The subscript 
$\lambda$ in $M_\lambda$ serves to index the model $M_\lambda$, not to denote a 
level of its cumulative hierarchy.)
\begin{diagram}[w=5em]
V & \rTo^j & M  \\
\dTo<{j_\lambda} & \ruTo>{k} & \\
 M_\lambda & & 
\end{diagram}
\figlist{Factor embeddings of a hypercompactness embedding}

If $M_{\lambda} = M$, then the existence of the embedding $j_\lambda$ suffices 
to complete the proof. If 
$M_\lambda \neq M$, then the elementary embedding $k$ must be 
nontrivial, and its critical point must be an greater than $\kappa$ and 
inaccessible in $M_\lambda$. The model $M_\lambda$ agrees with $V$ on 
$\kappa^+$, so this critical point must be greater than $\kappa^+$. Therefore, 
it follows from the elementarity of $k$ that $\kappa$ is 
$\alpha$-hypercompact in $M_\lambda$.
\qed
\end{proof}

\section{Enhanced supercompact cardinals} \label{section.EnhancedSC}
In this brief section, I analyze the consistency strength of an enhanced 
supercompact cardinal.  
The definition of an enhanced supercompact 
cardinal comes 
from Apter's paper, \cite{Apter2008:Reducing(EnhancedSC)}.

\begin{definition} A cardinal $\kappa$ is \textbf{enhanced 
supercompact} if 
and only 
if there exists a strong cardinal $\theta > \kappa$ 
such that for 
every cardinal $\lambda > \theta$, there exists a 
$\lambda$-supercompactness 
embedding $j: V \to M$ such that 
$\theta$ is strong in $M$.
\end{definition}

Apter required that the 
embedding $j$ be 
generated by a normal fine measure on $P_\kappa 
\lambda,$. This requirement provides a first-order characterization, but it 
adds 
no strength, because one can take a 
factor 
embedding.

This next theorem shows that a Woodin-for-supercompactness cardinal is strictly 
stronger in consistency than an enhanced supercompact cardinal.

\begin{theorem}\label{proposition.WoodinSC>ESC}
\label{theorem.WoodinSC>ESC}
Suppose the cardinal $\delta$ is Woodin for supercompactness. 
Then there are 
unboundedly many cardinals $\kappa < \delta$ such that 
$\kappa$ is a limit of 
cardinals $\eta$ such that there exists an inaccessible cardinal $\beta$ such 
that $\eta < 
\beta < \kappa$,  and
$$V_\beta \satisfies \eta \text{ is enhanced 
supercompact. }$$
\end{theorem}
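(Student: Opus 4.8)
The plan is to reduce the theorem to a single local assertion and then spread it out by reflection. Writing $P(\eta)$ for the property ``there is an inaccessible $\beta > \eta$ with $V_\beta \satisfies$ `$\eta$ is enhanced supercompact','' I will show that $P$ holds of every member of a stationary set $S \of \delta$, and that the witness $\beta$ for $P(\eta)$ may always be chosen below the least element of $S$ strictly above $\eta$. Granting this, the limit points of $S$ form a club $C \of \delta$, and every $\kappa \in C$ is a limit of cardinals $\eta \in S$, each satisfying $P(\eta)$ with a witness $\beta$ lying strictly between $\eta$ and the next element of $S$ below $\kappa$, hence with $\eta < \beta < \kappa$. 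Since $C$ is unbounded, this produces unboundedly many $\kappa$ of the desired form, and the reflection bookkeeping here is routine.

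To produce $S$, I first fix a set $A \of V_\delta$ coding, for every cardinal $\theta < \delta$ that is strong in $V_\delta$ and every $\xi < \delta$, a $\xi$-strongness extender for $\theta$. Since $\delta$ is Woodin for supercompactness it is in particular Woodin, so $V_\delta \satisfies \ZFC$ and has unboundedly many strong cardinals, making this coding substantive. By theorem \ref{theorem.WSCcharacterization}, the set $S = \set{\eta < \delta \st \eta \text{ is } (<\!\!\delta, A)\text{-supercompact}}$ is stationary in $\delta$. Each $\eta \in S$ is supercompact in $V_\delta$; letting $\eta^\ast$ be the next element of $S$ above $\eta$, the supercompactness of $\eta^\ast$ in $V_\delta$ makes it a limit of strong cardinals there, so I may fix a cardinal $\theta$ with $\eta < \theta < \eta^\ast$ that is strong in $V_\delta$, and then an inaccessible $\beta$ with $\theta < \beta < \eta^\ast$.

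Next I verify $P(\eta)$ with this $\beta$, taking $\theta$ as the strong cardinal above $\eta$ required by the definition of an enhanced supercompact cardinal. Within $V_\beta$ the cardinal $\eta$ is supercompact and $\theta$ is strong, these being the $<\!\!\beta$ fragments of their $<\!\!\delta$ versions, so the entire content is the coherence clause: for each cardinal $\lambda$ with $\theta < \lambda < \beta$ there must be a $\lambda$-supercompactness embedding $i : V_\beta \to P$ with critical point $\eta$ such that $\theta$ is strong in $P$. To obtain such an $i$, I use the $(<\!\!\delta, A)$-supercompactness of $\eta$: for a correctness level $\gamma$ chosen well above $\beta$, take an embedding $j : V \to M$ witnessing $(\gamma, A)$-supercompactness of $\eta$, so that $M^\gamma \of M$ and $j(A) \cap V_\gamma = A \cap V_\gamma$. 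Because $A$ codes the genuine strongness extenders of $\theta$ at every rank below $\gamma$, these extenders lie in $A \cap V_\gamma = j(A) \cap V_\gamma \of M$; since also $V_\gamma \of M$, they witness that $\theta$ itself, and not merely $j(\theta)$, is $<\!\!\gamma$-strong in $M$, hence $<\!\!\beta$-strong. I then pass to the $\lambda$-supercompactness factor embedding and cut down to $V_\beta$, which is absolute between $V$ and $M$ as $\beta < \gamma$ and $V_\gamma = M_\gamma$. The essential point is that $\theta$ lies below $\crit(k)$ for the factor map $k$ with $j = k \circ i$, so that, $k$ being elementary and fixing $\theta$, the strongness of $\theta$ descends from $M$ to the factor target, and hence to $P$.

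The hard part will be exactly this coherence clause. A generic supercompactness ultrapower of $\eta$ preserves the strongness of $\theta$ only up to its degree of supercompactness, so one cannot simply invoke ``$\eta$ supercompact and $\theta$ strong''; the proof must exploit the correctness built into $(<\!\!\delta, A)$-supercompactness to inject the honest $\theta$-strongness extenders into the target, and must coordinate the correctness level $\gamma$, the rank $\beta$, and the critical point of $k$ so that every extender of rank below $\beta$ survives into $P$. This coordination, together with the absoluteness $V_\gamma = M_\gamma$, is the technical heart of the argument; once it is in place, the descent through the factor map and the reflection to unboundedly many $\kappa$ follow as above.
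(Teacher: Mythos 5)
There is a genuine gap at the step you yourself single out as the technical heart: the descent of the strongness of $\theta$ into the factor target. From $j(A) \cap V_\gamma = A \cap V_\gamma$ you conclude only that $\theta$ is $<\!\!\gamma$-strong in $M$, and you then want to push this through the factor map $k$ (where $j = k \circ i$ and $\crit(k) > \lambda > \theta$) down into $N = \mathrm{Ult}(V,U)$ and hence into $P = \mathrm{Ult}(V_\beta, U)$, whose ordinals are exactly those below $\beta$ (as $\beta$ is inaccessible). But the transfer must run in the contrapositive direction: if $\theta$ fails to be $\xi$-strong in $N$ for some $\xi < \beta$, elementarity of $k$ yields only that $\theta$ fails to be $k(\xi)$-strong in $M$, and $k(\xi)$ can exceed $\gamma$ — for instance, for $\xi = i(\alpha)$ with $\eta < \alpha < \beta$ one has $k(\xi) = j(\alpha) > j(\eta) > \gamma$, since the definition of $(\gamma,A)$-supercompactness forces $\gamma < j(\eta)$. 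So the degrees at which you would need to know the strongness of $\theta$ in $M$ always overshoot $\gamma$, no matter how large $\gamma$ is chosen; no coordination of $\gamma$, $\beta$, and $\crit(k)$ can close this. Nor can "survival of extenders of rank below $\beta$ into $P$" substitute for it: $N$ computes $V_\xi$ incorrectly for $\xi$ above $\lambda$, so an extender of $V$ lying in $P$ need not witness strongness in the sense of $P$, and degrees between $\lambda$ and $\beta$ still need internal witnesses.

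The missing idea — which is exactly what the paper's choice of $f$, sending $\alpha$ to the \emph{second} strong cardinal of $V_\delta$ above $\alpha$, accomplishes — is that $\theta$ must be known to be strong in $M$ all the way up to $j(\delta)$, not merely up to $\gamma$. This is in fact available in your own setup, but by elementarity rather than by extender survival: since $A$ codes extenders precisely for the cardinals that are strong in $V_\delta$, the set $j(A)$ codes extenders precisely for the cardinals strong in $V_{j(\delta)}^M$; as $\theta$ occurs as a coded cardinal in $j(A) \cap V_\gamma = A \cap V_\gamma$, it follows that $\theta$ is strong in $V_{j(\delta)}^M$. With that, the descent works, because every relevant $k(\xi)$ is below $j(\beta) < j(\delta)$ — this is the exact analogue of the paper's contradiction argument, in which $\kappa_0$ is strong in $M_\delta$ while the factor maps send all relevant ordinals to ordinals below $\delta$. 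A secondary, fixable, issue: your assertion that the supercompactness of $\eta^*$ in $V_\delta$ makes it a limit of cardinals strong in $V_\delta$ is not immediate, since plain supercompactness reflects only bounded degrees of strongness; it is true, but requires either the chain supercompact $\Rightarrow$ Woodin together with $\Sigma_2$-correctness, or a reflection argument through the $(<\!\!\delta,A)$-supercompactness of $\eta^*$ itself. Alternatively, you can sidestep it entirely by letting $\kappa$ range over cardinals that are simultaneously limits of $S$, of the strong cardinals of $V_\delta$, and of inaccessibles, which still form an unbounded (indeed club-many) collection below $\delta$.
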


\begin{proof}
The proof follows the same general line of reasoning 
as theorem 5 of 
\cite{Apter2008:Reducing(EnhancedSC)}. 
Suppose $\delta$ is Woodin for supercompactness. Let 
$f: \delta \to 
\delta$ be given by taking $f(\alpha)$ to be the 
second strong 
cardinal of $V_\delta$ greater than $\alpha$. This 
function is 
well-defined, 
since the strong cardinals of $V_\delta$ are 
unbounded, since $\delta$ is Woodin.

Let $\kappa$ be a closure point of $f$, and let $j: 
V \to M$ be an 
elementary embedding such that $M^{j(f)(\kappa)} \of M$ and $j(f)(\kappa)< 
\delta$, 
i.e.\ the embedding $j$ witnesses that $\delta$ is Woodin for 
supercompactness with respect to the function $f$. By theorem 
\ref{theorem.WSCcharacterization}, assume without loss of generality that the 
embedding $j$ is generated by a normal fine measure on $P_\kappa \lambda$ for 
some cardinal $\lambda < \delta$. It follows that $j(\delta) = \delta$.
By the definition of $f$ and the elementarity of $j$, there is 
a cardinal $\kappa_0$ such that $\kappa < \kappa_0 < 
j(f)(\kappa)$, 
and the cardinal $\kappa_0$ is strong in the model 
$M_{j(\delta)} = M_\delta$, and 
furthermore, the cardinal
$j(f)(\kappa)$ is strong in the model $M_{\delta}$.\footnote{Actually, it 
suffices 
for the proof that $j(f)(\kappa)$ is inaccessible.}

For each cardinal $\lambda$ such that $\kappa_0 < \lambda < 
j(f)(\kappa)$, let $U_\lambda$ be the normal fine measure on $P_\kappa \lambda$ 
given by $A \in U \iff j \image \lambda \in j(A)$. Let $j_\lambda: V \to 
M_\lambda$ be the $\lambda$-supercompactness 
embedding 
generated by $U_\lambda$, and let $i: M_\lambda \to M$ be the 
elementary 
embedding such that $i \circ j_\lambda = j$. (The subscripted $\lambda$ serves 
to index the model $M_\lambda$, not to denote a level of its cumulative 
hierarchy.)
\begin{diagram}[w=5em]
V & \rTo^j & M  \\
\dTo<{j_\lambda} & \ruTo>{i} & \\
 M_\lambda & & 
\end{diagram}
\figlist{Factor embeddings of a Woodin-for-supercompactness 
embedding}

 Suppose 
towards a 
contradiction that for some cardinal $\gamma$ with 
$\kappa_0 < 
\gamma < 
j(f)(\kappa),$ and for some cardinal $\lambda$ such that $\kappa_0 < \lambda < 
j(f)(\kappa)$, 
$$M_\lambda \satisfies \kappa_0 \text{ is not 
$\gamma$-strong}.$$

Then by elementarity, 
$$M \satisfies i(\kappa_0) \text{ is not 
$i(\gamma)$-strong}.$$
But $i$ fixes $\kappa_0$, and so this contradicts 
the fact that the cardinal $\kappa_0$ is 
strong in $M_\delta$, since $i(\gamma) < 
i(j(f)(\kappa)) \leq 
j(j(f)(\kappa)) < \delta$. From this contradiction, I 
conclude that for all 
cardinals $\gamma$ and $\lambda$, if $\kappa_0 < \gamma < 
j(f)(\kappa)$ and $\kappa_0 < \lambda < 
j(f)(\kappa)$ then 
$$M_\lambda \satisfies \kappa_0 \text{ is 
$\gamma$-strong}.$$

Finally, from the closure of $M$, it follows that $U_\lambda \in 
M_{j(f)(\kappa)}$ for each 
cardinal 
 $\lambda$ such that $\kappa_0 < \lambda < j(f)(\kappa)$. Furthermore, for each 
 such cardinal $\lambda$, 
the 
elementary 
embedding generated by $U_\lambda$ in the model $M$
is equal to  
$j_\lambda \restrict M$. Since 
$\lambda$ was taken to be an arbitrary cardinal 
between $\kappa_0$ 
and $j(f)(\kappa)$, it follows that in the model
$M_{j(f)(\kappa)}$, the 
cardinal $\kappa$ is enhanced supercompact. 

By reflection, in $V_\kappa$, there are unboundedly 
many 
cardinals $\eta$ such that for some inaccessible cardinal $\beta$ with $\eta < 
\beta < 
\kappa$, 
$$V_\beta \satisfies \eta \text{ is enhanced supercompact. }$$

By a simple modification to the function $f$, the cardinal 
$\kappa$ can be made arbitrarily large below 
$\delta$. The conclusion of the theorem 
follows. 
\qed
\end{proof}

\section{High-jump cardinals and forcing} \label{section.forcing}

In this section, I prove some
results about the preservation and destruction of \hj\ cardinals by forcing.

Suppose $j: V \to M$ is a \hj\ embedding, and $V[G]$ is a forcing extension of 
$V$. Under what conditions does $j$ lift to a \hj\ embedding $j^* : V[G] \to 
M[H]$? The conditions under which a supercompactness embedding lifts to a 
supercompactness embedding have been well-studied in the literature. The 
following lemma extends these conditions to provide conditions for which a \hj\ 
embedding lifts to a \hj\ embedding.

\begin{lemma} \label{lemma.HJPreservation} Suppose $j: V \to M$ 
is a \hj\ embedding for
$\kappa$ with clearance $\theta$. Let $V[G]$ be a forcing
extension of $V$, and suppose that $j$ lifts
to a $\theta$-supercompactness embedding $j^*: V[G] \to M[H]$.\footnote{By a 
$\theta$-supercompactness embedding, I simply mean that $M[H]$ is sufficiently 
closed, not that the embedding is generated by a normal fine measure.} 
Let $U$ be the normal measure on $\kappa$ given by $A \in U \iff
\kappa \in j(A)$. \linebreak If the family of functions 
$(\kappa^\kappa)^V$ is $\leq_U$-unbounded in 
$(\kappa^\kappa)^{V[G]},$ then the lifted embedding $j^*$ is a \hj\ 
embedding. 
Furthermore, if $M[H]^{\theta^+} \nsubseteq M[H]$ in $V[G]$, then the 
conclusion 
can be 
strengthened to a biconditional: the lifted embedding $j^*$ is a 
\hj\ embedding if and only if the family of functions $(\kappa^\kappa)^V$ is 
$\leq_U$-unbounded 
in $(\kappa^\kappa)^{V[G]}.$ \end{lemma}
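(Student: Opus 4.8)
The plan is to reduce the lemma to a single computation of the clearance $\theta^*$ of the lifted embedding $j^*$, and to show that $\theta^* = \theta$ exactly when the stated unboundedness condition holds. Throughout I read $U$ as the normal measure derived from $j^*$ in $V[G]$, that is, $A \in U \iff \kappa \in j^*(A)$ for $A \of \kappa$ with $A \in V[G]$; this agrees on subsets of $\kappa$ lying in $V$ with the measure derived from $j$. The one fact I will use repeatedly is a translation between the order $\leq_U$ and the values $j^*(f)(\kappa)$: for $f,g \in (\kappa^\kappa)^{V[G]}$, if $\set{\alpha < \kappa \st g(\alpha) < f(\alpha)} \in U$ then $\kappa \in j^*(\set{\alpha \st g(\alpha) < f(\alpha)})$, which unwinds by elementarity to $j^*(g)(\kappa) < j^*(f)(\kappa)$; likewise $g \leq_U f$ gives $j^*(g)(\kappa) \leq j^*(f)(\kappa)$. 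Since $j^*$ extends $j$, for $f \in (\kappa^\kappa)^V$ we have $j^*(f)(\kappa) = j(f)(\kappa) < \theta$, the strictness coming from lemma \ref{lemma.CHnotAchieved}; consequently the clearance $\theta^*$ of $j^*$ always satisfies $\theta^* \geq \theta$.

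For the forward implication, suppose $(\kappa^\kappa)^V$ is $\leq_U$-unbounded in $(\kappa^\kappa)^{V[G]}$. Since $U$ is an ultrafilter, $\leq_U$ is a linear preorder, so unboundedness says precisely that no $g \in (\kappa^\kappa)^{V[G]}$ bounds $(\kappa^\kappa)^V$; thus for each such $g$ there is $f \in (\kappa^\kappa)^V$ with $g <_U f$. The translation above then gives $j^*(g)(\kappa) < j^*(f)(\kappa) = j(f)(\kappa) < \theta$. Hence every value $j^*(g)(\kappa)$ lies below $\theta$, so $\theta^* \leq \theta$ and therefore $\theta^* = \theta$. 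As $j^*$ is a $\theta$-supercompactness embedding we have $M[H]^\theta \of M[H]$, which is closure up to the clearance $\theta^* = \theta$; thus $j^*$ is a \hj\ embedding.

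For the converse, assume in addition that $M[H]^{\theta^+} \nsubseteq M[H]$ in $V[G]$, and suppose $(\kappa^\kappa)^V$ is \emph{not} $\leq_U$-unbounded in $(\kappa^\kappa)^{V[G]}$. Then some $g_0 \in (\kappa^\kappa)^{V[G]}$ has $f \leq_U g_0$ for all $f \in (\kappa^\kappa)^V$, so $j^*(f)(\kappa) \leq j^*(g_0)(\kappa)$ for all such $f$, whence $j^*(g_0)(\kappa) \geq \theta$. Applying the translation to the pointwise-successor function $g_0 + 1$, i.e.\ $\alpha \mapsto g_0(\alpha) + 1$, gives $j^*(g_0 + 1)(\kappa) = j^*(g_0)(\kappa) + 1 > \theta$, so $\theta^* > \theta$. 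Now lemma \ref{lemma.CHnotAchieved}, applied to $j^*$ itself, guarantees that $\theta^*$ is a $\beth$ fixed point, hence a cardinal, in $M[H]$; being a cardinal of $M[H]$ exceeding $\theta$, it satisfies $\theta^* \geq (\theta^+)^{M[H]}$. Since $M[H]^\theta \of M[H]$, the models $M[H]$ and $V[G]$ compute $\theta^+$ alike, so $\theta^* \geq \theta^+$. If $j^*$ were a \hj\ embedding we would then have $M[H]^{\theta^*} \of M[H]$ and hence $M[H]^{\theta^+} \of M[H]$, contradicting the added hypothesis; so $j^*$ is not a \hj\ embedding, completing the biconditional.

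The routine parts are the \Los-style translation and the agreement of $M[H]$ and $V[G]$ on $\theta^+$. The step I expect to be the crux is the converse: turning the failure of unboundedness into a clearance that is provably at least $\theta^+$, not merely strictly above $\theta$. This is exactly where the closure hypothesis $M[H]^{\theta^+} \nsubseteq M[H]$ is needed, and where the fact that the clearance of \emph{any} elementary embedding is a $\beth$ fixed point in its target (lemma \ref{lemma.CHnotAchieved}) does the essential work, since it forbids $\theta^*$ from landing strictly between $\theta$ and $\theta^+$.
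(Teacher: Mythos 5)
Your proof is correct and follows essentially the same route as the paper's: the forward direction is identical (ultrafilter linearity turns unboundedness into domination, so every value $j^*(g)(\kappa)$ for $g \in (\kappa^\kappa)^{V[G]}$ is trapped strictly below $\theta$ and the clearance is unchanged), and the converse likewise starts from a bounding function forcing the clearance of $j^*$ above $\theta$. The one genuine difference is that your converse is more complete than the paper's. The paper simply asserts that a function $g$ with $j^*(g)(\kappa) \geq \theta$ ``witnesses that $j^*$ is not a high-jump embedding,'' but this needs an argument: if the clearance $\theta^*$ of $j^*$ were an ordinal with $\theta < \theta^* < \theta^+$, then closure of $M[H]$ under $\theta$-sequences would already yield closure under $\theta^*$-sequences (code a $\theta^*$-sequence by a $\theta$-sequence via a bijection, which itself lies in $M[H]$ by $\theta$-closure), and $j^*$ would still be high jump, so no contradiction with $M[H]^{\theta^+} \nsubseteq M[H]$ would arise. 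Your step showing $\theta^* \geq \theta^+$ --- the clearance is a $\beth$ fixed point, hence a cardinal of $M[H]$, and $M[H]$ computes $\theta^+$ correctly because of $\theta$-closure --- is exactly what closes this gap, and it is needed for the biconditional to hold as stated; note that applying lemma \ref{lemma.CHnotAchieved} to $j^*: V[G] \to M[H]$ is legitimate, as its proof is internal and the paper explicitly licenses replacing $V$ by another model. Your preliminary remark reading $U$ as the measure derived from $j^*$ is also the right way to make $\leq_U$ meaningful on $(\kappa^\kappa)^{V[G]}$; the paper does this implicitly when it treats $\leq_U$ as deciding comparisons between functions of the extension.
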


\begin{proof}
Note that since $U$ is an ultrafilter, the family of functions 
$(\kappa^\kappa)^V$ is $\leq_U$-unbounded in $(\kappa^\kappa)^{V[G]}$ if and 
only this family is a dominating family, which is true if and only if the 
forcing does not add a $U$-dominating function. 

To prove the first part of the theorem, assume that the family of functions 
$(\kappa^\kappa)^V$ is 
$\leq_U$-unbounded in $(\kappa^\kappa)^{V[G]}.$ In $V[G]$, let $f: \kappa \to 
\kappa$. It suffices
to show that $j^*(f)(\kappa) < \theta$. Since $(\kappa^\kappa)^V$ is a 
dominating family, there is a function $g \in (\kappa^\kappa)^V$ such 
that $f \leq_U g$. It 
follows that
$$j(f)(\kappa) \leq j(g)(\kappa) < \theta,$$ and so the lifted embedding is a 
\hj\ embedding.

To prove the second part of the theorem, suppose that $M[H]^{\theta+} 
\nsubseteq 
M[H]$ in $V[G]$, and that $(\kappa^\kappa)^V$ is $\leq_U$-bounded by some 
function $g \in (\kappa^\kappa)^{V[G]}.$ Then $j^*(g)(\kappa) \geq 
j^*(f)(\kappa)$ 
for every $f \in (\kappa^\kappa)^V$, and so in particular $j^*(g)(\kappa) \geq 
\theta$, and so the function $g$ witnesses that $j^*$ is not a \hj\ embedding.
\qed
\end{proof}

One particular important instance where the class of functions 
$(\kappa^\kappa)^V$ is unbounded 
in 
$(\kappa^\kappa)^{V[G]}$ is if the forcing satisfies the $\kappa$-chain 
condition.

The biconditional version of the lemma actually holds even holds in many cases 
where $M[H]$ is closed 
under sequences of length greater than $\theta$ --- given a $g$ such that 
$j^*(g)(\kappa) \geq \theta$, one can easily modify the function $g$ to produce 
another function $h$ such that $j^*(h)(\kappa)$ is much larger than $\theta$. 
For instance, let $h(\alpha)$ be the least measurable cardinal above 
$g(\alpha)$, so that $j^*(h)(\kappa)$ is the least measurable 
cardinal of $M[H]$ above $\theta$.

The next theorem addresses the preservation of \hj\ cardinals in the downwards 
direction.

\begin{theorem}\label{theorem.HJapproxcover}
Suppose $V \of \overline{V}$ satisfies the $\delta$ approximation 
and cover properties, and for some cardinals $\kappa, \theta > \delta$ 
there is a \hj\ measure $U$ on $P_\kappa {\theta}$ in $\overline{V}$. 
Then there is a \hj\ measure on $P_\kappa {\theta}$ in $V$ as 
well.
\end{theorem}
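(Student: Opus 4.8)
The plan is to transfer the \hj\ measure downward by restriction: I will show that $\overline U = U \cap V$ is a \hj\ measure on $P_\kappa \theta$ in $V$. Fix the \hj\ embedding $j : \overline V \to \overline M$ generated by $U$ in $\overline V$, so that $\crit(j) = \kappa$, the clearance of $j$ is $\theta$, and $\overline M^\theta \of \overline M$ holds in $\overline V$; the seed is $s = j \image \theta \in \overline M$, and $A \in U \iff s \in j(A)$ for $A \of P_\kappa \theta$. The crucial structural feature is that $\crit(j) = \kappa > \delta$, which is exactly the regime in which the $\delta$-approximation and cover properties engage. In essence, the result I am after is the \hj\ instance of Hamkins's theorem that extensions with the $\delta$-approximation and cover properties have no new large cardinals, together with the extra bookkeeping needed to see that the transferred measure is not merely normal and fine but genuinely \hj.

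First I would show that $\overline U \in V$ and that $\overline U$ is a normal fine measure on $P_\kappa \theta$ as computed in $V$. Membership in $V$ is obtained from the $\delta$-approximation property: it suffices to check that each approximation $\overline U \cap a$, for $a \in V$ with $|a|^V < \delta$, lies in $V$. Writing $a = \set{A_\xi \st \xi < \mu}$ with $\mu < \delta \leq \kappa$, the $\kappa$-completeness of $U$ produces a single measure-one set $B = \bigcap \set{A_\xi \st A_\xi \in U} \cap \bigcap \set{P_\kappa \theta \setminus A_\xi \st A_\xi \notin U}$, so that membership of each $A_\xi$ in $U$ is decided by any single point of $B$; the $\delta$-cover property is then what lets this decision be carried out inside $V$, yielding $\overline U \cap a \in V$. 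The same use of the cover property shows that $U$ concentrates on $(P_\kappa \theta)^V$, that is, $(P_\kappa \theta)^V \in U$, which is precisely what guarantees that $\overline U$ is an honest ultrafilter on the $V$-computed power set rather than only on the larger $\overline V$-version. Normality and fineness are inherited from $U$, since they are witnessed by first-order data shared between $V$ and $\overline V$ on the relevant bounded sets.

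With $\overline U$ established as a normal fine measure on $P_\kappa \theta$ in $V$, I would finish by upgrading it to a \hj\ measure using the combinatorial characterization of lemma \ref{theorem.HighJumpMeasure}. It suffices to verify that for every function $f : \kappa \to \kappa$ in $V$, the set $X_f = \set{A \in P_\kappa \theta \st f(\ot(A \cap \kappa)) < \ot(A)}$ belongs to $\overline U$. Since $f \in V \of \overline V$ and $U$ is a \hj\ measure in $\overline V$, the corresponding set computed in $\overline V$ is in $U$ by the same characterization; intersecting with the $U$-large set $(P_\kappa \theta)^V$ and using that $f$, $\ot$, and $A \cap \kappa$ are absolute, the $V$-version $X_f$ is again in $U$ and lies in $V$, hence $X_f \in \overline U$. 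As $f$ was arbitrary, lemma \ref{theorem.HighJumpMeasure} certifies that $\overline U$ is a \hj\ measure on $P_\kappa \theta$ in $V$.

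The main obstacle is the middle step: showing that the restriction genuinely lives in $V$ and is a measure on the $V$-side $P_\kappa \theta$. The delicate point is that $(P_\kappa \theta)^V$ is in general a proper subset of $(P_\kappa \theta)^{\overline V}$ when $\theta > \delta$, so it is not automatic that $U$ gives it measure one; the argument must extract this concentration from the cover property together with the $\kappa$-completeness of $U$, exactly as in Hamkins's treatment of supercompactness, rather than from the approximation property alone, which by itself does not force small new subsets down into $V$. Once this concentration is secured, both the $\delta$-approximation assembly of $\overline U$ and the \hj\ combinatorics go through routinely.
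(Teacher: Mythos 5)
Your proposal is essentially correct, and it rests on exactly the same external pillar as the paper's own proof: the supercompactness case of Hamkins's transfer results for extensions with the $\delta$ approximation and cover properties (the paper cites the proof of corollary 26 of that paper). The difference is one of packaging. The paper restricts the \emph{embedding}: from Hamkins it extracts that $j \restrict V : V \to N$ is amenable to $V$ and that $N^\theta \of N$ holds in $V$, so that $j \restrict V$ is a \hj\ embedding, and it then applies lemma \ref{lemma.FactorEmbeddingJump} in $V$ to the factor embedding induced by the seed $j \image \theta$ to obtain a \hj\ measure lying in $V$. You instead restrict the \emph{measure}, proving $U \cap V \in V$ via the approximation property and verifying the high-jump property through the combinatorial characterization of lemma \ref{theorem.HighJumpMeasure}. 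These routes produce the same witness: since $U$ is normal and fine, $j \image \theta = [\mathrm{id}]_U$, so the measure induced via $j \restrict V$ by the seed $j \image \theta$ is precisely $U \cap V$. Your ending via the sets $X_f$ is a perfectly good alternative to the factor-embedding lemma, since the absoluteness of $\ot$ makes the transfer of those sets immediate.

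The one place where your write-up under-delivers is the step you yourself flag as delicate: the concentration $(P_\kappa \theta)^V \in U$. This is not a routine consequence of ``the cover property together with the $\kappa$-completeness of $U$''; it is equivalent to the assertion that $j \image \theta$ lies in the ground-model side $N$ of the target model, which is the actual hard content of Hamkins's supercompactness transfer, and its proof uses the approximation property in an essential way as well (e.g., between $N$ and $\overline{N}$, where it holds by elementarity), together with the amenability of $j \restrict V$ --- not cover and completeness alone. Note also that your first step (deciding $U \cap a$ by a point of the $\kappa$-complete intersection $B$) already presupposes a point of $B$ lying in $V$, i.e., it presupposes this concentration, so concentration must come first in the argument rather than being ``the same use of the cover property.'' As long as the concentration fact is treated as a citation to Hamkins --- which is exactly how the paper handles the corresponding facts --- your proof is complete; if you intended to derive it from scratch by the mechanism you describe, that step would be a genuine gap.
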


\begin{proof}
Let $j: \overline{V} \to \overline{N}$ be the elementary embedding 
generated by $U$ in $\overline{V}$.
By the proof of corollary 26 of 
\cite{Hamkins2003:ExtensionsWithApproximationAndCoverProperties}, the 
restricted 
embedding $j \restrict V: V \to N$ is amenable 
with $V$, and $N^\theta \of N$ in $V$. In particular, $j \restrict V$ is a \hj\ 
embedding. Let $j_0: V \to M$ be 
the $\theta$-supercompactness factor embedding induced via $j\restrict V$ by 
the 
seed $j 
\image \theta$. 
Let $f: \kappa \to \kappa$ be a function.  It follows 
from
lemma \ref{lemma.FactorEmbeddingJump} applied in $V$ to the embedding $j 
\restrict V$ that $j_0$ is a \hj\ embedding. Furthermore, the factor 
embedding construction ensures that $j_0$ is generated by a 
measure that is an element of $V$, so the proof is complete.
\qed
\end{proof}

Next, I show that the previous two results together prove the analogue of the 
Levy-Solovay theorem for \hj\ cardinals.
\begin{theorem}\label{theorem.HJPreservation}
Let $\P$ be a forcing 
notion such that $|\P| < \kappa$. Let $G \of \P$ be $V$-generic. 
Then in $V[G]$, the cardinal $\kappa$ is \hjp\ if and only if $\kappa$ is \hjp\ 
in $V$. 
\end{theorem}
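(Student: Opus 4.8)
The plan is to prove the two directions of the biconditional separately, each by appealing to one of the two immediately preceding results: the upward (preservation) direction will follow from lemma \ref{lemma.HJPreservation}, and the downward (reflection) direction from theorem \ref{theorem.HJapproxcover}. The only genuinely new work is checking that small forcing satisfies the hypotheses of those two results, and this rests on the observation that $\kappa$, being \hjp\ and hence measurable, is inaccessible, so that $|\P| < \kappa$ guarantees both $\P \in V_\kappa$ and that $\P$ is $\kappa$-c.c.

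For the forward direction, I would suppose $\kappa$ is \hjp\ in $V$, witnessed by $j: V \to M$ with clearance $\theta$ and $M^\theta \of M$. Since $\crit(j) = \kappa > |\P|$ and $\P \in V_\kappa \of M$, we have $j(\P) = \P$, so $G$ is also $M$-generic and $j$ lifts in the usual \Levy--Solovay fashion to $j^*: V[G] \to M[G]$ by setting $j^*(\dot x_G) = j(\dot x)_G$. Because $\P$ is small, this lift preserves the $\theta$-closure of the target model, so that $M[G]^\theta \of M[G]$ in $V[G]$; that is, $j^*$ is a $\theta$-supercompactness embedding. To apply lemma \ref{lemma.HJPreservation} it then remains to verify that $(\kappa^\kappa)^V$ is $\leq_U$-unbounded in $(\kappa^\kappa)^{V[G]}$, and this follows from the $\kappa$-chain condition of $\P$ exactly as remarked after that lemma: every $f: \kappa \to \kappa$ in $V[G]$ is dominated by a ground-model function. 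Lemma \ref{lemma.HJPreservation} then delivers that $j^*$ is a \hj\ embedding, so $\kappa$ is \hjp\ in $V[G]$.

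For the downward direction, I would suppose $\kappa$ is \hjp\ in $V[G]$, witnessed by an embedding with clearance $\theta$. Passing to the $\theta$-supercompactness factor embedding via the seed $j \image \theta$ and applying lemma \ref{lemma.HJFactor}, I may assume this is witnessed by a \hj\ measure $U$ on $P_\kappa \theta$ in $V[G]$. Setting $\delta = |\P|^+$, which satisfies $\delta < \kappa \leq \theta$ since $\kappa$ is inaccessible, the extension $V \of V[G]$ satisfies the $\delta$ approximation and cover properties, as these hold for any forcing of size less than $\delta$. Theorem \ref{theorem.HJapproxcover} then applies with $\overline{V} = V[G]$ and $\kappa, \theta > \delta$, producing a \hj\ measure on $P_\kappa \theta$ in $V$. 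Hence $\kappa$ is \hjp\ in $V$.

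The main obstacle is the forward direction, specifically confirming that the lifted embedding $j^*$ is genuinely a $\theta$-supercompactness embedding --- that is, that small forcing preserves the $\theta$-closure of $M$ --- and that the chain-condition hypothesis of lemma \ref{lemma.HJPreservation} is correctly verified; once these standard small-forcing facts are in hand, both halves reduce cleanly to the cited results. A minor point of bookkeeping is ensuring that the clearance of the lifted embedding is again exactly $\theta$ rather than merely bounded by it, but this is precisely the content supplied by lemma \ref{lemma.HJPreservation}.
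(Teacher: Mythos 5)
Your proof is correct and takes essentially the same approach as the paper's: the upward direction verifies the hypotheses of lemma \ref{lemma.HJPreservation} via the $\kappa$-chain condition of small forcing, and the downward direction applies theorem \ref{theorem.HJapproxcover} after noting that small forcing satisfies the $\delta$ approximation and cover properties for some $\delta < \kappa$. The only difference is one of detail: you spell out the Levy--Solovay lift of the embedding and the factor-embedding step producing a \hj\ measure on $P_\kappa\theta$ in $V[G]$, both of which the paper's terser proof leaves implicit.
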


\begin{proof}
Since the forcing is small, in particular it satisfies the $\kappa$-chain 
condition, and so every function $f: \kappa \to \kappa$ 
in $V[G]$ is bounded by such a function in $V$. Thus, the upwards direction of 
the proof follows from lemma \ref{lemma.HJPreservation}. By lemma 13 of 
\cite{Hamkins2003:ExtensionsWithApproximationAndCoverProperties}, the forcing 
$\P$ satisfies the $\delta$ approximation and cover properties for some 
cardinal 
$\delta < \kappa$. Thus, the downwards 
direction of the 
proof follows immediately from theorem \ref{theorem.HJapproxcover}.
\qed
\end{proof}

Next, I apply lemma \ref{lemma.HJPreservation} to show that the canonical 
forcing of the \GCH\ preserves \hj\ cardinals.

\begin{theorem} \label{theorem.hjGCH}
Every \hj\ cardinal is preserved by the canonical forcing $\P$ of the \GCH.  To 
be precise, the forcing $\P$ is defined as the 
 Easton support product over all infinite cardinals 
$\delta$  of $\Add(\delta^+, 1)$. 
\end{theorem}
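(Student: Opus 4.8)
The plan is to verify the hypotheses of lemma \ref{lemma.HJPreservation}, which reduces the whole problem to two tasks: lifting the given \hj\ embedding $j:V\to M$ (with clearance $\theta$ and $M^\theta\of M$) to a $\theta$-supercompactness embedding $j^*:V[G]\to M[H]$, and checking that the ground-model functions $(\kappa^\kappa)^V$ remain $\leq_U$-unbounded in $(\kappa^\kappa)^{V[G]}$, where $U$ is the normal measure on $\kappa$ derived from $j$ and $G\of\P$ is $V$-generic. Once both hold, lemma \ref{lemma.HJPreservation} gives at once that $j^*$ is a \hj\ embedding in $V[G]$, so $\kappa$ is \hjp\ there.

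I would dispose of the unboundedness condition first, since it is elementary. Factor $\P=\P_{<\kappa}\times\P_{\geq\kappa}$, where $\P_{<\kappa}$ is the Easton product of the factors $\Add(\delta^+,1)$ for $\delta<\kappa$ and $\P_{\geq\kappa}$ is the product of the remaining factors. Since $\kappa$ is inaccessible, $\P_{<\kappa}$ has the $\kappa$-chain condition, so every $f:\kappa\to\kappa$ in $V[G_{<\kappa}]$ is dominated everywhere by a function in $V$. The tail $\P_{\geq\kappa}$ has least factor $\Add(\kappa^+,1)$ and is therefore $\leq\kappa$-closed, a property preserved by the $\kappa$-c.c. forcing $\P_{<\kappa}$, so it adds no new function $\kappa\to\kappa$ over $V[G_{<\kappa}]$. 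Hence $(\kappa^\kappa)^{V[G]}=(\kappa^\kappa)^{V[G_{<\kappa}]}$ is dominated by $(\kappa^\kappa)^V$; in particular the ground functions are $\leq_U$-unbounded (indeed $\leq_U$-dominating), as required.

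The substance of the argument, and the step I expect to be the main obstacle, is producing the lift $j^*$ as a genuine $\theta$-supercompactness embedding; here I would follow the well-studied template for lifting supercompactness embeddings through the canonical \GCH\ forcing. Writing $G=G_{<\kappa}\times g\times G^{>\kappa}$ according to the factors below $\kappa$, at $\kappa$, and above $\kappa$, one factors $j(\P)$ in $M$ so that its part on the coordinates below $\kappa$ coincides with $\P_{<\kappa}$ (using that $\theta$ is a $\beth$-fixed point in $M$ by lemma \ref{lemma.CHnotAchieved}, whence $V_\theta=M_\theta$ and $\kappa<\theta$), and this part is met by $G_{<\kappa}$ itself, since $j\image G_{<\kappa}=G_{<\kappa}$. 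The only coordinates of $j(\P)$ constrained by $j\image G$ are those at and above $j(\kappa)$, where the images $j\image g$ and $j\image G^{>\kappa}$ must be absorbed into the $M$-generic for the highly closed top block. The delicate region is the ``middle'' block of $j(\P)$ occupying the coordinates in the interval $[\kappa,j(\kappa))$: this block is unconstrained by $j\image G$, so an $M$-generic for it must be manufactured inside $V[G]$. This is the crux, because the block is only $\leq\kappa$-closed in $M$ while the number of its dense subsets lying in $M$ is far larger than $\kappa$; the resolution is the standard diagonal construction of a generic filter, carried out in $V[G]$ using that $M$ is closed under $\theta$-sequences and that \GCH\ holds after forcing, so that the descending sequence of conditions admits lower bounds at the relevant limit stages. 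The same closure of $M$ is what makes the requisite master conditions for $j\image g$ and $j\image G^{>\kappa}$ available in the top block.

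Finally I would assemble $H$ from $G_{<\kappa}$, the middle generic, and the top generic extending $j\image(g\times G^{>\kappa})$, define $j^*(\dot x_G)=j(\dot x)_H$, and check that $M[H]^\theta\cap V[G]\of M[H]$. This closure verification is routine once the generics are in hand: a $\theta$-sequence of elements of $M[H]$ lying in $V[G]$ is coded by a $\theta$-sequence of $j(\P)$-names in $M$, which itself lies in $M$ by the closure of $M$ together with the chain-condition and closure structure of $\P$. With $j^*$ established as a $\theta$-supercompactness embedding and the domination condition already secured, lemma \ref{lemma.HJPreservation} completes the proof.
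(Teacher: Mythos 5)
Your overall plan coincides with the paper's proof: both reduce the theorem to lemma \ref{lemma.HJPreservation}, both verify the $\leq_U$-unboundedness hypothesis by factoring $\P \cong \P_{<\kappa}\times\P_{\geq\kappa}$ with the first factor $\kappa$-c.c.\ and the second $\leq\kappa$-closed (your version of this step, including the appeal to Easton's lemma, is correct and is essentially the paper's argument verbatim), and both then require the lift of $j$ to a $\theta$-supercompactness embedding $j^*: V[G]\to M[H]$. The paper treats that lift as a black box, citing it as a standard argument; you attempt to carry it out, and it is there that your proposal has a genuine gap.

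The mechanisms you propose provably fail for the Easton \emph{product} as the theorem defines $\P$. First, the master conditions you invoke for the top block do not exist. Write $g$ for the generic at coordinate $\kappa$, i.e.\ for $\Add(\kappa^+,1)$. Since $M^\theta \of M$ and $\theta > \kappa^+$, both $j\restrict\kappa^+$ and $j\image\kappa^+$ are elements of $M$, so the set of conditions in $j(\Add(\kappa^+,1)) = \Add(j(\kappa)^+,1)^M$ whose domain contains $j\image\kappa^+$ is dense and lies in $M$. If $H$ were $M$-generic on $j(\P)$ with $j\image G \of H$, then its coordinate at $j(\kappa)$ would be an $M$-generic filter on $\Add(j(\kappa)^+,1)^M$ containing $j\image g$; meeting the dense set above produces a condition $q \in M$ which, by compatibility with every $j(p)$ for $p \in g$, satisfies $q(j(\alpha)) = \big(\bigcup g\big)(\alpha)$ for all $\alpha < \kappa^+$. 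Then $\bigcup g$ is computable from $q$ and $j\restrict\kappa^+$, two elements of $M$, so $\bigcup g \in M \of V$, contradicting genericity. In particular $\bigcup j\image g \notin M$: closure of $M$ cannot supply this master condition, because the object encodes a generic that is not in $V$ at all. Second, your diagonalization for the middle block cannot succeed as described: that block is only $\leq\kappa$-closed, so a descending sequence meets at most $\kappa^+$ many dense sets, while already the single coordinate $\Add(\kappa^{++},1)^M$ contributes $\kappa^{++}$ many dense sets lying in $M$; neither $\theta$-closure of $M$ nor \GCH\ in $V[G]$ changes this. The standard argument that the paper's citation points to is really an argument for the reverse Easton \emph{iteration}: there one uses the $V$-generic itself for all stages up to $\theta$ (legitimate because $V_\theta = M_\theta$, by the clearance facts of section \ref{section.SuperstrongAndClearance}), one diagonalizes only on the stages in $[\theta, j(\kappa))$, where the forcing is $\leq\theta$-closed and the relevant dense sets can be enumerated in order type $\theta^+$ precisely because the stage-$\theta$ forcing collapses $(2^\theta)^V$ to $\theta^+$, and the master condition at stage $j(\kappa)$ becomes available because by that point the generic $g$ \emph{is} an element of the model in which that stage's forcing is computed. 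None of these three moves is available for the bare product, so your sketch is not a correct rendering of the argument it appeals to and cannot be repaired without changing the presentation of the forcing or the route to the lift.
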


\begin{proof}
Let $G \of \P$ be $V$-generic.
In $V$, let $U$ be a \hj\ measure on $P_\kappa \theta$ for some cardinals 
$\kappa$ and $\theta$. Let $j_U$ be the \hj\ embedding generated by $U$. 
It follows from a standard argument that the embedding $j_U$ lifts to a 
$\theta$-supercompactness embedding $j_U^*: V[G] \to M[H]$.\footnote{For 
details, see theorem 105 of Hamkins's unpublished book, \emph{Forcing and Large 
Cardinals}.}  To complete the 
proof that \hj\ cardinals are preserved by $\P$,  it suffices to show that 
every function on $f: \kappa \to \kappa$ in $V[G]$ is dominated by such a 
function in $V$ and 
then apply lemma \ref{lemma.HJPreservation}. 
Towards this end, note that the forcing 
$\P$ factors as 
$\P_{<\!\kappa}\ast \P_{\geq \kappa}$. The first factor 
satisfies the $\kappa$-chain condition, and so every function
$f: \kappa \to \kappa$ added by it is dominated by a ground model function. The 
second factor is 
$\leq\!\kappa$-closed, and so it adds no new function $f: 
\kappa \to \kappa$.
\qed
\end{proof}

High-jump cardinals are in general much more fragile than supercompact 
cardinals, as is shown by the following theorem.

\begin{theorem} \label{theorem.HJFragility}
Let $\kappa$ be a \hj\ cardinal. After forcing with $\Add(\kappa, 1)$ or 
with
$Add(\kappa^+, 1)$, the cardinal $\kappa$ is no longer a \hj\ cardinal.
\end{theorem}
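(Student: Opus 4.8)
The plan is to argue by contradiction: suppose $\kappa$ remains high jump in the extension $V[G]$, fix a witnessing embedding, and extract a contradiction from the generic subset added at the critical level. First I would reduce to a canonical witness. If $\kappa$ is high jump in $V[G]$, then by Lemma~\ref{theorem.HighJumpMeasure} together with Lemma~\ref{lemma.HJFactor} applied in $V[G]$, there is a cardinal $\theta$ and a high-jump measure $U$ on $P_\kappa\theta$ whose ultrapower embedding $j\colon V[G]\to M$ has critical point $\kappa$, clearance $\theta$, and satisfies $M^\theta\subseteq M$ in $V[G]$. By Lemma~\ref{lemma.ThetaStrongLimit} the clearance $\theta$ is a strong limit above $\kappa^+$, so $M^\theta\subseteq M$ yields $V_\theta^{V[G]}\subseteq M$; in particular $M$ contains the generic Cohen set $c$ that $G$ adds (a subset of $\kappa$ in the first case, of $\kappa^+$ in the second), since that set lies in $V_\theta^{V[G]}$.

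The key preliminary observation is that neither forcing perturbs the clearance. For $\Add(\kappa,1)$ the forcing has size $\kappa$ and hence the $\kappa^+$-chain condition, so every $f\colon\kappa\to\kappa$ in $V[G]$ is dominated everywhere by a ground-model function; for $\Add(\kappa^+,1)$ the forcing is ${\le}\kappa$-closed and so adds no new function $\kappa\to\kappa$ at all. In either case $(\kappa^\kappa)^V$ remains dominating in $V[G]$, and in particular is $\le_U$-unbounded for the normal measure on $\kappa$ derived from $j$ as in Lemma~\ref{lemma.HJPreservation}. Thus the clearance of any such $j$ is computed from ground-model functions, and by the lifting criterion of Lemma~\ref{lemma.HJPreservation} the obstruction to high jumpness cannot come from the functions side but must come from the target's closure. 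The heart of the argument is therefore to show that the tight match ``closure $=$ clearance'' demanded by the high-jump definition cannot be realized in $V[G]$. The plan is to apply $j$ to the canonical sequence $\langle c\cap\alpha : \alpha<\mu\rangle$ of proper initial segments of the generic (where $\mu=\kappa$ or $\kappa^+$), each of which lies in $V$ by the closure of the forcing, and to use the genericity of $c$ to conclude that the image data, read off inside $M$, would reconstruct $c$ from ground-model parameters, contradicting the freshness of $c$ over $V$.

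The main obstacle, and the step requiring the most care, is ruling out \emph{every} high-jump embedding rather than merely showing that the ground-model witness fails to lift. Because the new subset is added exactly at $\kappa$ (respectively $\kappa^+$), the approximation-property restriction machinery underlying the downward preservation result of Theorem~\ref{theorem.HJapproxcover} does not apply: its hypothesis requires the critical point to exceed the degree of approximation, which here equals $\kappa$, so a putative witness in $V[G]$ need not be a lift of any $V$-embedding and must be defeated directly. Concretely, I would show that a $P_\kappa\theta$-measure whose embedding is closed under $\theta$-sequences cannot exist once the fresh set has been added at the critical level: the target would have to absorb a generic for $\Add(j(\kappa),1)^M$ (respectively $\Add(j(\kappa^+),1)^M$) while being closed only under $\theta$-sequences with $\theta<j(\kappa)$, a degree of closure available to almost-huge but not to high-jump embeddings. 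This is exactly the feature distinguishing these forcings from the canonical \GCH\ forcing of Theorem~\ref{theorem.hjGCH}, whose Cohen factors all sit strictly above $\kappa$ on a ${\le}\kappa$-closed tail that does lift; here the perturbation sits at the critical level itself, which is precisely where high-jump closure is too thin to recover.
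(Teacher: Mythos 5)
There is a genuine gap, and it sits exactly at the step you yourself flag as the main obstacle. After reducing to an arbitrary high-jump embedding $j: V[G] \to M$ with clearance $\theta$ and $M^\theta \subseteq M$ in $V[G]$, you must show that no such embedding exists, and neither of the two reasons you offer is an argument. First, "the image data, read off inside $M$, would reconstruct $c$ from ground-model parameters, contradicting the freshness of $c$ over $V$" proves nothing: $c \in M$ is automatic (as you note, $c \in V_\theta^{V[G]} \subseteq M$; alternatively $c = j(c) \cap \kappa$, resp.\ $j(c)\cap\kappa^+$), and this is perfectly compatible with $c$ being fresh over $V$, because $M$ is a submodel of $V[G]$, not of $V$. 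Second, the claim that $M$ "would have to absorb a generic for $\Add(j(\kappa),1)^M$ while being closed only under $\theta$-sequences with $\theta < j(\kappa)$" is a non sequitur: containing the single set $j(c)$ costs $M$ no closure whatsoever, since $j(c)$ is in the range of $j$; no conflict with $\theta < j(\kappa)$ ever materializes. To get an actual contradiction one must place $c$ inside a model that provably cannot contain it --- for instance, show $c$ lands in the inner model $j(V)$ of $M$ (using that $M$ believes its ground model's forcing is ${<}j(\kappa)$-distributive, so initial segments of $j(c)$ lie in $j(V)$) and then relate $j(V)$ to $V$ below $j(\kappa)$ via ground-model definability and approximation/cover arguments. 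That fresh-set analysis is the main technical content of the Bagaria--Hamkins--Tsaprounis paper cited in the text, and nothing in your sketch reproduces it; your first paragraph (clearance is computed by ground-model functions, Lemma \ref{lemma.HJPreservation} only handles lifts) is correct but does no work toward the conclusion.

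The paper's own proof is entirely different and essentially two lines: by Corollary \ref{corollary.SuperstrongFactorofHJ} (via Theorem \ref{theorem.SuperstrongCharacterization}), every \hj\ cardinal is superstrong, and the cited theorem of Bagaria, Hamkins, and Tsaprounis says that $\Add(\kappa,1)$ and $\Add(\kappa^+,1)$ destroy superstrongness; hence, applying the corollary inside $V[G]$, the cardinal $\kappa$ cannot remain high jump there. If you want to salvage your approach without reproving the fresh-set lemma, this is the move you are missing: factor your putative high-jump embedding in $V[G]$ into a superstrongness embedding (the seed-hull construction of Theorem \ref{theorem.SuperstrongCharacterization} works in $V[G]$ just as in $V$) and then quote the known destruction of superstrongness, rather than attacking the high-jump embedding directly.
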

\begin{proof}
A recent theorem of Bagaria, Hamkins, and Tsaprounis shows that superstrong 
cardinals are destroyed by these forcings, among others 
\cite{BagariaHamkinsTsaprounis:SuperstrongFragility}. 
By corollary \ref{corollary.SuperstrongFactorofHJ}, every \hj\ cardinal is also 
superstrong, so it follows that these forcings also destroy \hj\ cardinals.
\qed
\end{proof}

Finally, I show that if the cardinal $\kappa$ is \hjp, then there is a forcing 
extension where $\kappa$ is still \hjp\ but is not supercompact.

\begin{theorem} \label{theorem.HJButNotSC}
Suppose there exists a \hj\ measure on $P_\kappa \theta$, and furthermore, the 
cardinal $\kappa$ is supercompact. Let $\P$ be any 
forcing smaller than $\kappa$. Let $g \of \P$ be $V$-generic. Let $\Q$ be any 
nontrivial forcing that is $\leq 
\!\!\scexp{\theta}{\kappa}$-closed in $V[g]$, and let $G \of \Q$ be 
$V[g]$-generic. Then in $V[g][G]$, there is still a \hj\ measure on $P_\kappa 
\theta$, but the cardinal
$\kappa$ is not supercompact.
\end{theorem}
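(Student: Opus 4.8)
The plan is to handle the two conclusions separately, first absorbing the small forcing $\P$ and then studying the closed forcing $\Q$. Since $|\P|<\kappa$, the L\'evy--Solovay theorem keeps $\kappa$ supercompact in $V[g]$, and theorem~\ref{theorem.HJPreservation}, via lemmas~\ref{lemma.HJPreservation} and~\ref{lemma.HJFactor}, keeps it high jump: the small forcing is $\kappa$-c.c., so $(\kappa^\kappa)^V$ dominates $(\kappa^\kappa)^{V[g]}$, the lifted embedding is a \hj\ embedding of clearance $\theta$, and factoring it through the seed $j\image\theta$ produces a \hj\ measure on $P_\kappa\theta$ in $V[g]$. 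Writing $W:=V[g]$ and $\delta:=\scexp{\theta}{\kappa}$, lemma~\ref{lemma.ThetaStrongLimit} gives $\theta^\kappa=\theta$, so that $\delta\geq 2^\theta=|P(P_\kappa\theta)|>\kappa$. It therefore suffices to prove both conclusions over $W$, where there is a \hj\ measure $U$ on $P_\kappa\theta$ of clearance $\theta$ and $\kappa$ is supercompact.

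For the survival of the high-jump measure I would argue that $\Q$ changes nothing relevant. As $\Q$ is $\leq\!\delta$-closed with $\delta\geq 2^\theta$, it adds no new subset of $2^\theta$; in particular it adds no new element of $P(P_\kappa\theta)$, a set of size $2^\theta$, and no new function $f\colon\kappa\to\kappa$, each coded by a subset of $\kappa<2^\theta$. Hence $U$ is still a normal fine measure on $P_\kappa\theta$ in $W[G]$, and for every $f\colon\kappa\to\kappa$ the set $\set{A\in P_\kappa\theta \st f(\ot(A\cap\kappa))<\ot(A)}$ is unchanged and still lies in $U$. By lemma~\ref{theorem.HighJumpMeasure} applied inside $W[G]$, the ultrapower of $W[G]$ by $U$ is again a \hj\ embedding, so $U$ remains a \hj\ measure on $P_\kappa\theta$ in $V[g][G]$.

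The harder half is that $\kappa$ is no longer supercompact, and here I would exploit the nontriviality of $\Q$. A nontrivial $\leq\!\delta$-closed forcing adds, at some regular cardinal $\gamma>\delta$, a new subset $X\of\gamma$ all of whose proper initial segments lie in $W$; choosing $\gamma$ least makes $X$ such a \emph{fresh} set. I would then assume toward a contradiction that $\kappa$ is still supercompact in $W[G]$, fix a $\gamma$-supercompactness embedding $j\colon W[G]\to N$ with $\crit(j)=\kappa$, $j(\kappa)>\gamma$, and ${}^\gamma N\of N$, and restrict it to the ground model to get an elementary $j\restrict W\colon W\to j(W)$ with $N=j(W)[j(G)]$. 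Because $\kappa<\delta<\gamma<j(\kappa)\leq j(\delta)$, the image forcing $j(\Q)$ is $\leq\!\gamma$-closed over $j(W)$ and so adds no subset of $\gamma$; together with ${}^\gamma N\of N$ this places $X\in j(W)$, whereas $j(X)$ is fresh over $j(W)$ by elementarity.

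The main obstacle is converting this configuration into an outright contradiction with the freshness of $X$ over $W$ --- that is, showing that no supercompactness embedding of $W[G]$ can coexist with a genuinely new fresh subset at $\gamma$. This is a superdestructibility phenomenon in the spirit of Hamkins and Shelah, and the delicate point is to defeat \emph{every} supercompactness embedding in $W[G]$, not only those lifted from $W$. I expect to close the gap by a reflection argument organized around the factor-embedding technique of lemma~\ref{lemma.HJFactor} and the tight constraints of lemma~\ref{lemma.ThetaStrongLimit} relating $\theta$, its cofinality (at most $2^\kappa$), and the closure of $\Q$; once $\kappa$ is shown not to be $\gamma$-supercompact, it is not supercompact in $V[g][G]$ and the proof is complete.
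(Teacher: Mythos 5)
Your treatment of the first conclusion is essentially the paper's own argument: L\'evy--Solovay together with theorem \ref{theorem.HJPreservation} handles the small forcing $\P$, and the $\leq\!\scexp{\theta}{\kappa}$-closure of $\Q$ preserves the high-jump measure because that closure prevents $\Q$ from adding elements of $P_\kappa \theta$, subsets of $P_\kappa \theta$, or functions $f: \kappa \to \kappa$. (One slip here: your inequality $\scexp{\theta}{\kappa} \geq 2^\theta$ is backwards. Lemma \ref{lemma.ThetaStrongLimit} gives $\theta^\kappa = \theta$, hence $\scexp{\theta}{\kappa} = \theta < 2^\theta$. What you actually need is not that $\Q$ adds no subsets of the cardinal $2^\theta$, but that it adds no subsets of the \emph{set} $P_\kappa \theta$, which has size $\scexp{\theta}{\kappa}$; that follows directly from $\leq\!\scexp{\theta}{\kappa}$-closure, so the error is repairable.)

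The genuine gap is the second conclusion, and you acknowledge it yourself. The configuration you reach --- $X \in j(W)$ while $j(X)$ is fresh over $j(W)$ --- is not a contradiction: $j(W)$ is not contained in $W$, so nothing so far prevents the restricted embedding's target from containing the new set $X$, and the freshness of $j(X)$ constrains $j(X)$, not $X$. Turning this configuration into an outright contradiction is precisely the content of the superdestructibility theorem of Hamkins and Shelah, which is the main result of a separate published paper and needs further ideas (their argument works with $j(X)$ below $\sup j \image \gamma$ and exploits cofinality together with the closure of the target model); the tools you propose for finishing (lemma \ref{lemma.HJFactor}, lemma \ref{lemma.ThetaStrongLimit}, and ``a reflection argument'') are not the ones that close it. The paper does not reprove this result either: its entire proof of non-supercompactness is a citation of \cite[p.551]{HamkinsShelah98:Dual}, applied to a cardinal $\lambda > \scexp{\theta}{\kappa}$ to which the nontrivial closed forcing $\Q$ adds a new subset, concluding that $\kappa$ is not $\lambda$-supercompact in $V[g][G]$. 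So as written your proposal establishes only the first conclusion; the correct fix is to invoke the Hamkins--Shelah theorem rather than attempt to reconstruct it.
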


\begin{proof}
Since the forcing $\P$ is small relative to $\kappa$, the cardinal $\kappa$ is 
still both supercompact and \hjp\ in $V[g]$. Because of the closure condition 
on 
the forcing $\Q$, this forcing does not add any subsets or elements to 
$P_\kappa 
\theta$, nor does it add any new functions $f: \kappa \to \kappa$. Therefore, 
in 
$V[g][G]$, the cardinal $\kappa$ is still \hjp. However, since the forcing $\Q$ 
is nontrivial, there is a cardinal $\lambda > \scexp{\theta}{\kappa}$ such that 
$\Q$ adds a subset to $\lambda$. By a theorem of Hamkins 
and Shelah (\cite[p.551]{HamkinsShelah98:Dual}), the cardinal $\kappa$ is no 
longer $\lambda$-supercompact in $V[g][G]$.
\qed
\end{proof}

Some open questions on the topics of this section are as follows. 

\begin{question} \label{question.HJcardinalFragility}
Suppose $j: V \to M$ is a \hj\ embedding for $\kappa$ with clearance $\theta$. 
What types of forcing, if any, preserve the $\theta$-supercompactness of 
$\kappa$ while destroying the \hj\ cardinal property of $\kappa$?
\end{question}

Question \ref{question.HJcardinalFragility} can be further refined to a 
question 
about individual embeddings rather than about cardinals.

\begin{question} \label{question.HJEmbeddingfragility}
Let $j: V \to M$ be a \hj\ embedding for $\kappa$ generated by a \hj\ measure. 
Let $\P$ be a forcing notion, and suppose that the embedding $j$ lifts over 
$\P$ 
such that the lift is a supercompactness embedding. Under what conditions does 
the lift fail to be a \hj\ embedding?
\end{question}

\section{Laver functions for \hj\ cardinals} \label{section.Laver}
In this section, I define Laver functions for \shj\
cardinals and establish their existence under suitably strong hypotheses. 
Laver functions were originally defined for supercompact cardinals in 
\cite{Laver78}.

Given a supercompact cardinal $\kappa$, a supercompactness Laver function for 
$\kappa$ is a partial function $\ell\from \kappa \to V_\kappa$ such that for 
every 
cardinal 
$\lambda$ and for every set $x \in H_{\lambda^+}$ there is a 
$\lambda$-supercompactness embedding generated by a normal fine measure on 
$P_\kappa 
\lambda$ such that $j(\ell)(\kappa) = x$.  One can also put additional 
requirements on the domain of a supercompactness Laver function. For 
instance, one can require that each $\gamma \in \dom(\ell)$ is an inaccessible 
cardinal such that the closure property $\ell \image \gamma \of V_\gamma$ is 
satisfied.
A \shj\ Laver function is defined similarly to a supercompactness Laver 
function, as follows.
\begin{definition}
Given a \shj\ cardinal $\kappa$, a \textbf{\shj\ Laver function} for 
$\kappa$ is a partial function $\ell \from \kappa \to V_\kappa$ satisfying the 
following properties. For 
every 
set $x$, 
for unboundedly many cardinals $\delta$, there is \hj\ embedding with critical 
point $\kappa$ and clearance $\delta$, generated by a \hj\ measure, such that 
$j(\ell)(\kappa) = x$. Furthermore, for every ordinal $\gamma \in \dom(\ell)$, 
the closure property $\ell \image \gamma \of V_\gamma$ holds.
\end{definition}

I will prove that every supercompact cardinal has a 
supercompactness
Laver 
function anticipating every set, and whenever the cardinal $\kappa$ is 
$2^{\theta^{<\kappa}}$-supercompact, there is a supercompactness Laver 
function for $\kappa$ anticipating every set in $H_\theta^+$.

The analysis for \hj\ cardinals is more complicated than in the case of 
supercompact cardinals, 
because a supercompactness factor embedding of a \hj\ embedding is not in 
general a \hj\ embedding. For this reason, the \hj\ cardinals with excess 
closure are a useful 
tool.
As a warm-up exercise before reading the proof of the existence of \shj\ Laver 
functions, the reader may wish to review proposition \ref{proposition.HJplus}, 
which uses a related technique. 




\begin{theorem}\label{theorem.SHJLaverFunction}
Let $\kappa$ be a cardinal. Then there exists a partial function $\ell \from 
\kappa \to V_\kappa$ such that for all cardinals $\theta$, if there is a \hj\ 
measure on $P_\kappa 2^\theta$ generating an ultrapower embedding with 
clearance 
$\theta$, then in the model $V_\theta$, the function $\ell$ is a \shj\ 
Laver function for $\kappa$. 
\end{theorem}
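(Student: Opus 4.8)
The plan is to construct $\ell$ by a single least-counterexample recursion, carried out once and for all in $V$, and then to verify the Laver property inside each qualifying $V_\theta$ by the reflection method of proposition \ref{proposition.HJplus}. Fix a well-ordering $<^*$ of $V_\kappa$. For an inaccessible $\gamma<\kappa$ with $\ell\image\gamma\of V_\gamma$ already defined, I would let $\ell(\gamma)$ be the $<^*$-least set $x\in V_\gamma$ that is a \emph{counterexample} at $\gamma$, meaning that the collection of clearances $\delta$ for which some \hj\ measure on some $P_\gamma\mu$ generates a \hj\ embedding $h$ with $h(\ell\restrict\gamma)(\gamma)=x$ is bounded; if no counterexample exists, $\ell(\gamma)$ is left undefined. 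The requirements $\ell\image\gamma\of V_\gamma$ and $x\in V_\gamma$ are built into the recursion, so the closure clause $\ell\image\gamma\of V_\gamma$ in the definition of a \shj\ Laver function holds automatically. I will take care (see the discussion of the main obstacle below) to phrase the boundedness clause relative to rank-initial segments, so that $\ell(\gamma)$ is computed identically in $V$ and in any sufficiently tall rank-initial segment; this absoluteness is what lets me transfer the computation through $j$.

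Now fix a cardinal $\theta$ as in the hypothesis. By the remark following proposition \ref{proposition.HJplus}, the existence of a \hj\ measure on $P_\kappa 2^\theta$ with clearance $\theta$ is the same as the existence of a \hj\ embedding $j\colon V\to M$ with critical point $\kappa$, clearance $\theta$, and $M^{2^\theta}\of M$. As in the proof of proposition \ref{proposition.HJplus}, lemma \ref{lemma.ClearanceFactsAHJEmbedding} gives $\theta^\kappa=\theta$, so the \hj\ measure $U$ on $P_\kappa\theta$ induced by the seed $j\image\theta$ lies in $M$; by lemma \ref{lemma.HJFactor} the induced factor embedding $h=j_U\colon V\to M_0$ is a \hj\ embedding of clearance $\theta$, and the factor map $k\colon M_0\to M$ with $j=k\circ h$ has $\crit(k)>\theta$. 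Lemma \ref{lemma.VThetaElemMjkappa} supplies $V_\theta\satisfies\ZFC$ and $V_\theta\elem M_{j(\kappa)}$, and since $\theta<j(\kappa)$ with $j(\kappa)$ inaccessible in $M$ we also have $U\in M_{j(\kappa)}$.

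Suppose toward a contradiction that $\ell\restrict V_\theta$ fails to be a \shj\ Laver function in $V_\theta$. The closure clause holds, so anticipation must fail: let $x\in V_\theta$ be the $<^*$-least counterexample as computed in $V_\theta$, and put $T_x=\set{\delta\st\text{some \hj\ measure of clearance }\delta\text{ anticipates }x}$, which is bounded in $\theta$. By $V_\theta\elem M_{j(\kappa)}$ the set $x$ is also the least counterexample in $M_{j(\kappa)}$. The decisive step is to show $j(\ell)(\kappa)=x$: this value is what $M$ assigns at stage $\kappa$ by the recursion, and by the absoluteness built into the definition it agrees with the value computed in $M_{j(\kappa)}$, which by elementarity with $V_\theta$ is exactly the least counterexample $x$. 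Since $\crit(k)>\theta$ and $x\in V_\theta$, the identity $j(\ell)(\kappa)=k\bigl(h(\ell)(\kappa)\bigr)=x$ forces $h(\ell)(\kappa)=x$. Thus the \hj\ measure $U$, which has clearance $\theta$, anticipates $x$, and since everything involved has rank below $j(\kappa)$ this is visible in $M_{j(\kappa)}$; that is, $\theta\in T_x$ as computed in $M_{j(\kappa)}$. Now I reflect exactly as in proposition \ref{proposition.HJplus}: were $T_x$ bounded in $M_{j(\kappa)}$, the bound would reflect below $\theta$ through $V_\theta\elem M_{j(\kappa)}$, contradicting $\theta\in T_x$; hence $T_x$ is unbounded in $j(\kappa)$, and by elementarity $T_x$ is unbounded in $\theta$ inside $V_\theta$. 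So $x$ is anticipated at unboundedly many clearances by \hj\ embeddings generated by \hj\ measures, contradicting that $x$ is a counterexample. This contradiction shows that $\ell\restrict V_\theta$ is a \shj\ Laver function.

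The step I expect to be the main obstacle is the identity $j(\ell)(\kappa)=x$, together with the requirement that the anticipating embeddings be genuine \hj\ embeddings rather than mere supercompactness factor embeddings. The first difficulty is entirely one of absoluteness of the recursion: I must phrase the counterexample clause so that the notion of a \hj\ measure anticipating $x$ (via the combinatorial characterization of lemma \ref{theorem.HighJumpMeasure}) and its boundedness condition are computed the same way in $V$, in $M$, and in $M_{j(\kappa)}$, so that $M$'s stage-$\kappa$ value coincides with the least counterexample seen by $M_{j(\kappa)}$ and hence by $V_\theta$; the delicate point is the ceiling against which ``bounded'' is measured, which must be taken inside rank-initial segments so that passing from $M$ to $M_{j(\kappa)}$ does not alter the computed value. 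The second difficulty, the failure of supercompactness factors of \hj\ embeddings to remain \hj, is precisely what the excess-closure hypothesis $M^{2^\theta}\of M$ resolves: it is what places $U$ inside $M$ and makes $h$ a \hj\ embedding of clearance $\theta$ through lemma \ref{lemma.HJFactor}, and it is what guarantees that the clearances $\delta$ recovered by reflection are genuinely realized by \hj\ measures in $V_\theta$.
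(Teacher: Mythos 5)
Your verification skeleton does follow the paper's proof: the least-counterexample recursion, the transfer through $V_\theta \elem M_{j(\kappa)}$ from lemma \ref{lemma.VThetaElemMjkappa}, the factor embedding induced by the seed $j \image \theta$, putting $U$ into $M_{j(\kappa)}$ via $M^{2^\theta} \of M$ together with $\theta^\kappa = \theta$ (lemma \ref{lemma.ClearanceFactsAHJEmbedding}), and the reflection trick of proposition \ref{proposition.HJplus}. But your construction of $\ell$ has a genuine flaw, and it is exactly the restriction you advertise as a feature: you require the value chosen at stage $\gamma$ to lie in $V_\gamma$, so that the closure clause $\ell \image \gamma \of V_\gamma$ ``holds automatically.'' The property ``$\ell(\gamma) \in V_\gamma$ for every $\gamma \in \dom(\ell)$'' is first-order in $\ell$, so it transfers through \emph{every} elementary embedding $h: V \to N$ with critical point $\kappa$, giving $h(\ell)(\kappa) \in V_\kappa^N = V_\kappa$. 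Hence your $\ell$ can only ever anticipate sets of rank below $\kappa$, whereas a \shj\ Laver function for $\kappa$ in $V_\theta$ must anticipate every set of $V_\theta$ --- for instance the ordinal $\kappa$ itself, of rank $\kappa$. So for your $\ell$ every set of rank in $[\kappa,\theta)$ is a counterexample, and the conclusion of the theorem fails for your function no matter what the hypothesis provides. Your argument breaks precisely at the ``decisive step'' $j(\ell)(\kappa)=x$: by elementarity, your recursion makes $j(\ell)(\kappa)$ (when defined) the least counterexample \emph{among elements of $V_\kappa$}, while the failure of the Laver property in $V_\theta$ is witnessed by sets that your recursion can never produce as values. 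Relatedly, your selection device $<^*$, a well-ordering of $V_\kappa$, cannot even select a least counterexample among sets in $V_\theta \setminus V_\kappa$.

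The paper's recursion decouples the two clauses instead of merging them. At stage $\gamma$ the value $\ell(\gamma)$ is a counterexample of minimal $\in$-rank chosen from all of $V_\kappa$ (so, after applying $j$, the stage-$\kappa$ value of $j(\ell)$ ranges over all of $M_{j(\kappa)}$, which is what allows $j(\ell)(\kappa)$ to equal a witness $y \in V_\theta$ of rank possibly at or above $\kappa$), and the closure clause is secured by \emph{thinning the domain}: a stage $\gamma$ is admitted to $\dom(\ell)$ only if the previously chosen values happen to lie in $V_\gamma$. The gate is still passed at stage $\kappa$ of $j(\ell)$, since all values of $\ell$ lie in $V_\kappa$. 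Your secondary worry about where the boundedness of the set of anticipating clearances is evaluated is legitimate, but note that the paper does not prove an absoluteness statement between $M$ and $M_{j(\kappa)}$; it relativizes the counterexample clause to $V_\kappa$ in the definition itself, so that by elementarity the clause for $j(\ell)$ at stage $\kappa$ refers to $M_{j(\kappa)}$ by definition, and the witness and its bound $\delta_0$ then move between $M_{j(\kappa)}$ and $V_\theta$ by lemma \ref{lemma.VThetaElemMjkappa}. That is the device your ``rank-initial segment'' remark gestures at, but it must be built into the recursion --- and even with it in place, the rank cap $x \in V_\gamma$ described above would still defeat your construction.
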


\begin{proof}
Define the function $\ell$ recursively as follows. Suppose that $\ell 
\restrict \gamma$ has been defined. Define $\ell(\gamma)$ as 
described in the next paragraph if the relevant hypotheses 
hold. Otherwise, leave $\gamma$ out of the domain of $\ell$. 

Suppose that $\ell \image \gamma \of V_\gamma$ and that furthermore, in the 
model $V_\kappa$, some set $x$  witnesses that the function $\ell \restrict 
\gamma$ is not a 
\shj\ Laver function for $\gamma$. \emph{That is to say, in the 
model 
$V_\kappa$, there is a cardinal $\delta_0$ such that for all cardinals $\delta 
> 
\delta_0$, there is no
elementary embedding $j: V \to M$ with critical point $\gamma$ and clearance 
$\delta$, generated by a \hj\ 
measure, such that $j(\ell\restrict \gamma)(\gamma) = x$.} Then pick a set $x 
\in 
V_\kappa$ of minimal $\in$-rank among all sets with this property, and let
$\ell(\gamma) = x$. 

I now verify that the function $\ell$ has the desired feature. Suppose not. 
Then 
there is 
some cardinal $\theta$ such that there is a \hj\ measure $\mu$ on $P_\kappa 
2^\theta$ 
generating an ultrapower embedding with clearance $\theta$, 
but in the model $V_\theta$, some set $x$ witnesses that the function $\ell$ 
fails to be a \shj\ Laver function for $\kappa$. Let
$j: V \to M$ be the ultrapower generated by $\mu$. By 
lemma \ref{lemma.VThetaElemMjkappa}, the 
elementarity relation $V_\theta \elem M_{j(\kappa)}$ holds. Therefore, in the 
model $M_{j(\kappa)}$, the set $x$ witnesses that the function $\ell$ is not a 
\shj\ Laver 
function for $\kappa$. That is to say, in the model $M_{j(\kappa)}$, there is 
some cardinal $\delta_0$ such that for all cardinals  $\delta 
> \delta_0$, there does not exist a \hj\
embedding $h$, generated by a \hj\ measure, with critical point $\kappa$ and 
clearance 
$\delta$, such that 
$h(\ell)(\kappa) = x$.

Accordingly, since $j(\ell) 
\restrict \kappa = \ell$, it follows from the definition of the function $\ell$ 
and from the 
elementarity of the embedding $j$ that $j(\ell)(\kappa)$ is 
defined and equal to some set $y \in M_{j(\kappa)}$ such that in the model 
$M_{j(\kappa)}$, 
the set $y$ witnesses that the function $\ell$ is not a \shj\ Laver function 
for 
$\kappa$.
Furthermore, this set $y$ is of minimal $\in$-rank, and so $y \in V_\theta$, 
since $V_\theta \elem M_{j(\kappa)}$.

Let $U$ be the 
$\theta$-supercompactness measure on $P_\kappa \theta$ induced by $j$ via the 
seed $j \image 
\theta$. Let $j_U: V \to N$ be the supercompactness embedding generated by $U$, 
and let the elementary embedding $k$ be such that the diagram below 
commutes.

\begin{diagram}[w=5em,h=30pt]
 V & \rTo^j
& M   \\
\dTo<{j_U} & \ruTo>{k} & \\
 N & & 
\end{diagram}
\figlist{Factor embeddings of a \hj\ embedding with excess closure} 

 By reasoning similar to the proof of lemma 
\ref{lemma.HJFactor}, the measure $U$ is a \hj\ measure, the clearance 
of $j_U$ is $\theta$, and
$j_U(\ell)(\kappa) = y$. Furthermore, since the model $M_{j(\kappa)}$ is closed 
in $V$ under 
sequences of length $2^{\theta}$, and since $\theta^\kappa = \theta$ by 
lemma \ref{lemma.ClearanceFactsAHJEmbedding}, it follows that $U \in 
M_{j(\kappa)}$, and the model $M_{j(\kappa)}$ 
correctly computes that $j_U(\ell)(\kappa) = y$. (In the previous sentence, 
$j_U$ denotes the 
embedding generated by the measure $U$ in the model $M_{j(\kappa)}$.) However, 
$\theta > \delta_0$, 
so 
this computation contradicts the fact that $y$ is not anticipated in 
$M_{j(\kappa)}$ by 
$\ell$ with respect to any \hj\ 
embedding with clearance greater than $\delta_0$ that is generated by a \hj\ 
measure.
\qed
\end{proof}

\begin{corollary} \label{corollary.SHJLaverFunction} 
Suppose that for some cardinal $\kappa$, there is an unbounded set of cardinals 
$\theta$ such that there is a \hj\ measure on $P_\kappa 2^\theta$ generating an 
ultrapower embedding with clearance $\theta$.
Then there exists a \shj\ Laver function for $\kappa$ in $V$.
\end{corollary}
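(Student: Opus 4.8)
The plan is to show that the single function $\ell$ produced by theorem \ref{theorem.SHJLaverFunction} already serves as a \shj\ Laver function for $\kappa$ in $V$, with no further modification. First I would fix this $\ell$. The closure requirement on its domain---that $\ell \image \gamma \of V_\gamma$ for every $\gamma \in \dom(\ell)$---is built into the recursive construction of $\ell$ and is a property of $\ell$ as a single fixed function, so it holds in $V$ automatically. Thus the entire content of the corollary lies in upgrading the anticipation clause from ``in each $V_\theta$'' to ``in $V$''.

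To verify anticipation in $V$, I would fix an arbitrary set $x$ and an arbitrary cardinal $\mu$, and seek a clearance $\delta > \mu$ witnessing anticipation of $x$. Using the hypothesis, choose $\theta$ from the given unbounded class of cardinals---those carrying a \hj\ measure on $P_\kappa 2^\theta$ generating clearance $\theta$---large enough that $\theta > \mu$ and $x \in V_\theta$. By theorem \ref{theorem.SHJLaverFunction}, $\ell$ is a \shj\ Laver function for $\kappa$ in the model $V_\theta$. Since $x \in V_\theta$, its anticipation clause, interpreted in $V_\theta$, yields clearances $\delta$ unbounded below $\theta$, and in particular some $\delta$ with $\mu < \delta < \theta$, together with a \hj\ measure $U \in V_\theta$ on some $P_\kappa \lambda$ with $\lambda < \theta$ whose ultrapower, as computed in $V_\theta$, is a \hj\ embedding of clearance $\delta$ satisfying $j_U(\ell)(\kappa) = x$.

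The crux is then an absoluteness transfer from $V_\theta$ to $V$, which I expect to be the main obstacle. Here I would invoke the observation already used in proposition \ref{proposition.HJplus}: since $\lambda < \theta$ and $\theta$ is a strong limit, $V_\theta$ and $V$ have exactly the same subsets of $P_\kappa \lambda$ and the same functions $\kappa \to \kappa$, and $V_\theta \satisfies \ZFC$ by lemma \ref{lemma.VThetaElemMjkappa} (so in particular $\delta$ remains a cardinal in $V$). Consequently $U$ still satisfies the combinatorial \hj\ criterion of lemma \ref{theorem.HighJumpMeasure} in $V$ and generates a \hj\ embedding $j_U : V \to N$ of clearance $\delta$ there as well. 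It then remains to check that the computation $j_U(\ell)(\kappa) = x$ also transfers. Writing $g(A) = \ell(\ot(A \cap \kappa))$, we have $j_U(\ell)(\kappa) = [g]_U$ by \Los; since $g$ takes values in $V_\kappa$ and $x$ has rank below $\theta$, the value $[g]_U$ is determined by $U$ together with functions from $P_\kappa \lambda$ into sets of rank $< \theta$, all of which lie in $V_\theta$ and are computed identically in $V_\theta$ and in $V$. Hence $[g]_U = x$ holds in $V$, so $U$ witnesses anticipation of $x$ at clearance $\delta > \mu$. As $\mu$ was arbitrary there are unboundedly many such $\delta$, and as $x$ was arbitrary, $\ell$ is a \shj\ Laver function for $\kappa$ in $V$.
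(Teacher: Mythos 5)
Your proposal is correct and follows essentially the same route as the paper: fix the $\ell$ from theorem \ref{theorem.SHJLaverFunction}, use that it is a \shj\ Laver function in $V_\theta$ for unboundedly many of the hypothesized $\theta$, and then transfer the witnessing measures and embeddings from $V_\theta$ to $V$. Your detailed absoluteness argument (the combinatorial criterion of lemma \ref{theorem.HighJumpMeasure} plus the computation of $j_U(\ell)(\kappa)$ via functions of bounded rank) is just an unpacking of the paper's one-line observation that the embedding generated by $U$ inside $V_\theta$ is the restriction $j_U \restrict V_\theta$ of the $V$-ultrapower.
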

\begin{proof}
Define the function $\ell$ as in theorem \ref{theorem.SHJLaverFunction}. It 
follows from theorem \ref{theorem.SHJLaverFunction} 
that for arbitrarily large cardinals $\theta$, the function $\ell$ is a \shj\ 
Laver function for $V_\theta$. Furthermore, for such a cardinal $\theta$, if $U 
\in V_\theta$ is a \hj\ measure, generating an embedding $j_U: V \to M$, then 
the embedding generated by the measure $U$ in the model $V_\theta$ is $j_U 
\restrict V_\theta$. It follows that $\ell$ is a \shj\ Laver function for 
$\kappa$.
\qed
\end{proof}

%
%
%
%


It is also possible to define Laver functions for other large cardinal notions 
related to \hj\ cardinals, for instance for particular types of \hj\ cardinals 
with excess closure. For an example, see \cite[theorem 
116]{Perlmutter2013:Dissertation}.

I close the section with a question.

\begin{question}
Is it possible to prove the existence or the consistency of a \shj\ Laver 
function from a hypothesis substantially weaker than that of theorem 
\ref{theorem.SHJLaverFunction}?
\end{question}

For instance, it may be possible to force the existence of a \shj\ Laver 
function for $\kappa$, beginning in a model where $\kappa$ is only \shj.


\section{Ideas for further research}\label{section.FurtherDefinitionIdeas} 
\label{Section.FurtherStudy}
In this section, I review some of the areas for further research discussed in 
previous sections, and I also suggest a few additional areas for further 
research.

One relationship between cardinals in the chart is 
unresolved. I do not know the relationship between enhanced 
supercompact cardinals and hypercompact cardinals. 
One established large cardinal is conspicuously missing from my 
analysis. An extendible cardinal is known to be intermediate in 
consistency strength between a supercompact cardinal and a 
\Vopenka\ cardinal. But I don't know the relationship between an 
extendible cardinal and a hypercompact cardinal or an 
enhanced supercompact cardinal. Furthermore, the $C^{(n)}$-extendible 
cardinals, 
introduced by Bagaria in \cite{Bagaria2012:CnCardinals}, fall into the large 
cardinal hierarchy between an extendible cardinal and a \Vopenka\ cardinal.

Another possible direction for further research would be to define more large 
cardinal 
notions by modifying the definitions that I have already given. One
possibility would be to modify the definition of a \hj\ cardinal so that $M$ is 
closed under $j(f)(\kappa)$-sequences for all $f: \kappa \to ORD$ rather than 
just for $f: \kappa \to \kappa$. Such a cardinal would be huge, to say the 
least. 
This hugeness is witnessed by the case where $f$ is the function with constant 
value 
$\kappa$.


As discussed in the conclusion of section \ref{section.forcing},  more work can 
also 
be done on the relationships between \hj\ cardinals and forcing. More 
generally, 
there 
is more work to 
be 
done proving forcing results for all of the cardinals discussed in this paper, 
and in particular, many of the results from section \ref{section.forcing} could 
be extended to apply to other large cardinals in this paper. 

Keeping in mind that many of the large cardinals in this paper were first 
applied towards 
universal indestructibility results, it is an interesting goal to use the new 
large cardinals to weaken the hypotheses for universal indestructibility 
results, with the goal of eventually proving an equiconsistency between a 
universal indestructibility result and a large cardinal notion. One could also 
prove new universal indestructibility results. More generally, the large 
cardinals that I have studied here could be used to weaken the 
hypotheses or find equiconsistencies for other set-theoretic results as well.

\section{Acknowledgments}
I would like to thank my dissertation advisor, Joel David Hamkins, for his 
advice relating to this research. This paper uses Paul Taylor's Commutative 
Diagrams in TeX package.

%


\bibliographystyle{plain}
\bibliography{NormansBiblio,WorksByNorman}

\begin{thebibliography}{10}

\bibitem{Apter2008:Reducing(EnhancedSC)}
Arthur~W. Apter.
\newblock Reducing the consistency strength of an indestructibility theorem.
\newblock {\em Math. Log. Q.}, 54:288--293, 2008.

\bibitem{Apter2011:SomeApplicationsMethod(Hypercompact)}
Arthur~W. Apter.
\newblock Some applications of {S}argsyan's equiconsistency method.
\newblock {\em Fund. Math.}, 216:207--222, 2012.

\bibitem{ApterHamkins99:UniversalIndestructibility}
Arthur~W. Apter and Joel~David Hamkins.
\newblock Universal indestructibility.
\newblock {\em Kobe J. Math.}, 16(2):119--130, 1999.

\bibitem{ApterSargsyan2007:ReductionInCons(WoodinSC)}
Arthur~W. Apter and Grigor Sargsyan.
\newblock A reduction in consistency strength for universal indestructibility.
\newblock {\em Bull. Pol. Acad. Sci. Math.}, 55:1--6, 2007.

\bibitem{ApterSargsyan2010:UniversalIndestructibilty}
Arthur~W. Apter and Grigor Sargsyan.
\newblock An equiconsistency for universal indestructibility.
\newblock {\em J. Symbolic Logic}, 75:314--322, 2010.

\bibitem{Bagaria2012:CnCardinals}
Joan Bagaria.
\newblock ${C}^{(n)}$-cardinals.
\newblock {\em Arch. Math. Logic}.

\bibitem{BagariaHamkinsTsaprounis:SuperstrongFragility}
Joan Bagaria, Joel~David Hamkins, and Konstantinos Tsaprounis.
\newblock Superstrong cardinals are never {Laver} indestructible.
\newblock In preparation.

\bibitem{BarbanelDipriscoTan84:nHugeSuperhuge}
Julius~B. Barbanel, Carlos~A. Diprisco, and It~Beng Tan.
\newblock Many-times huge and superhuge cardinals.
\newblock {\em J. Symbolic Logic}.

\bibitem{Foreman07:GenericElemEmb3}
Matthew Foreman.
\newblock Generic elementary embeddings, lecture 3.
\newblock Appalachian Set Theory, June 2007.
\newblock \url{http://www.math.cmu.edu/~eschimme/Appalachian/Foreman.html}
  (Accessed Dec. 19, 2012).

\bibitem{Fuchs09:CombinedMaximality}
Gunter Fuchs.
\newblock Combined maximality principles up to large cardinals.
\newblock {\em J. Symbolic Logic}.

\bibitem{Hamkins2003:ExtensionsWithApproximationAndCoverProperties}
Joel~David Hamkins.
\newblock Extensions with the approximation and cover properties have no new
  large cardinals.
\newblock {\em Fund. Math.}

\bibitem{Hamkins:ForcingAndLargeCardinals}
Joel~David Hamkins.
\newblock {\em Forcing and large cardinals}.
\newblock book manuscript in preparation.

\bibitem{Hamkins98:AsYouLikeIt}
Joel~David Hamkins.
\newblock Destruction or preservation as you like it.
\newblock {\em Ann. Pure Appl. Logic}, 91(2-3):191--229, 1998.

\bibitem{HamkinsShelah98:Dual}
Joel~David Hamkins and Saharon Shelah.
\newblock Superdestructibility: a dual to {L}aver's indestructibility.
\newblock {\em J. Symbolic Logic}, 63(2):549--554, 1998.
\newblock [HmSh:618].

\bibitem{Jech:SetTheory3rdEdition}
Thomas Jech.
\newblock {\em Set Theory}.
\newblock Springer Monographs in Mathematics, 3rd edition, 2003.

\bibitem{Kanamori:TheHigherInfinite2ed}
Akihiro Kanamori.
\newblock {\em The Higher Infinite, Corrected Second Edition}.
\newblock Springer-Verlag, 2004.

\bibitem{Laver78}
Richard Laver.
\newblock Making the supercompactness of $\kappa$ indestructible under
  $\kappa$-directed closed forcing.
\newblock {\em Israel J. Math.}

\bibitem{Perlmutter2013:Dissertation}
Norman~Lewis Perlmutter.
\newblock {\em Inverse limits and high-jump cardinals}.
\newblock {PhD} dissertation, CUNY Graduate Center, Department of Mathematics,
  May 2013.

\bibitem{SolovayReinhardtKanamori1978:StrongAxiomsOfInfinity}
Robert~M. Solovay, William~N. Reinhardt, and Akihiro Kanamori.
\newblock Strong axioms of infinity and elementary embeddings.
\newblock {\em Ann. Math. Logic}.

\end{thebibliography}
\end{document}